\DeclareFontFamily{U}{mathb}{\hyphenchar\font45}
\DeclareFontShape{U}{mathb}{m}{n}{
<-6> mathb5 <6-7> mathb6 <7-8> mathb7
<8-9> mathb8 <9-10> mathb9
<10-12> mathb10 <12-> mathb12
}{}
\DeclareSymbolFont{mathb}{U}{mathb}{m}{n}
\DeclareMathSymbol{\llcurly}{\mathrel}{mathb}{"CE}
\DeclareMathSymbol{\ggcurly}{\mathrel}{mathb}{"CF}
\newtheorem{theorem}{Theorem}[section]
\newtheorem{lemma}[theorem]{Lemma}
\newtheorem{prop}[theorem]{Proposition}
\newtheorem{cor}[theorem]{Corollary}
\newtheorem{thm}[theorem]{Theorem}
\newtheorem{lem}[theorem]{Lemma}
\newtheorem*{cor*}{Corollary}
\newtheorem*{thm*}{Theorem}
\newtheorem*{lem*}{Lemma}
\newtheorem*{prop*}{Proposition}
\theoremstyle{definition}
\newtheorem{defn}[theorem]{Definition}
\newtheorem{example}[theorem]{Example}
\newtheorem*{defn*}{Definition}
\theoremstyle{remark}
\newtheorem{remark}[theorem]{Remark}
\newcommand{\pr}{\operatorname{Prob}}
\newcommand{\act}{\curvearrowright}
\newcommand{\Op}{\underline{\bf{\rm OA}}}
\newcommand{\cB}{\mathcal{B}}
\newcommand{\cH}{\mathcal{H}}
\newcommand{\cP}{\mathcal{P}}
\newcommand{\cU}{\mathcal{U}}
\newcommand{\bC}{{\mathbb{C}}}
\newcommand{\bE}{{\mathbb{E}}}
\newcommand{\bF}{{\mathbb{F}}}
\newcommand{\bN}{{\mathbb{N}}}
\newcommand{\bZ}{{\mathbb{Z}}}
\newcommand{\bR}{{\mathbb{R}}}
\newcommand{\sA}{{\mathsf{A}}}
\newcommand{\sB}{{\mathsf{B}}}
\newcommand{\sC}{{\mathsf{C}}}
\newcommand{\sD}{{\mathsf{D}}}
\newcommand{\om}{\omega}
\newcommand{\id}{\operatorname{id}}
\newcommand{\Sub}{\operatorname{Sub}}
\newcommand{\G}{\Gamma}
\newcommand{\dist}{\mathbin{\mathaccent\cdot\cup}}
\newcommand{\inc}[2]{#1 \subset #2}
\newcommand{\incInt}[3]{#1 \subset #2 \subset #3}
\title[]{Tight inclusions of $C^*$-dynamical systems}
\author[Y. Hartman]{Yair Hartman}
\address{Yair Hartman\\ Ben-Gurion University of the Negev}
\email{hartmany@bgu.ac.il}
\author[M. Kalantar]{Mehrdad Kalantar}
\address{Mehrdad Kalantar\\ University of Houston}
\email{kalantar@math.uh.edu}
\date{}
\begin{document}

\begin{abstract}
We study a notion of tight inclusions of $C^*$- and $W^*$-dynamical systems which is meant to capture a tension between topological and measurable rigidity of boundary actions. An important case of such inclusions are $C(X)\subset L^\infty(X, \nu)$ for measurable boundaries with unique stationary compact models. We discuss the implications of this phenomenon in the description of Zimmer amenable intermediate factors.
Furthermore, we prove applications in the problem of maximal injectivity of von Neumann algebras.
\end{abstract}

\thanks{YH was partially supported by ISF grant 1175/18. MK was supported by a Simons Foundation Collaboration Grant (\# 713667).}

\maketitle

\section{Introduction}\label{sec:intro}

One of the key tools in rigidity theory is the notion of boundary actions in the sense of Furstenberg \cite{Furs63, Furs73}. These actions are defined in both topological and measurable setups, and exploiting their  dynamical and ergodic theoretical properties reveals various rigidity phenomena of the underlying groups.

For example, the fact that the measurable Furstenberg-Poisson Boundaries of irreducible lattices in higher rank semisimple Lie groups, have few quotients (\textit{Factor Theorem}) implies rigidity for normal subgroups (\textit{Normal Subgroup Theorem}), and a classification of certain spaces related to the Furstenberg-Poisson Boundary (\textit{Intermediate Factor Theorem}) implies rigidity of Invariant Random Subgroups. These rigidity phenomena are ``higher-rank phenomena'' either in the classical sense of semi-simple Lie groups, or for product groups, and are based on the measure theoretical boundary.
Recently, these properties were shown to imply 
also strong rigidity results in noncommutative settings (\cite{BH, BBHP}).

On the other front, dynamical properties of the topological boundaries have been shown to imply certain noncommutative rigidity properties, such as $C^*$-simplicity and the unique trace property (\cite{KK17, BKKO}). 

In many natural examples, measurable boundaries are concretely realized on topological boundaries, and one expects that this would reflect in their dynamical properties. 
However, the connection between the two notions of boundary actions has barely been systematically investigated.

A typical instance in which the interaction between the two setups arises is a topological boundary admitting a unique stationary measure turning it into a measurable boundary. 
A systematic study of such action in the noncommutative setting was undertaken in the authors' work \cite{HartKal}, wherein the framework of this connection, properties of measurable boundaries, were used in $C^*$-simplicity problems.

An important consequence of having a unique stationary boundary measure is a uniqueness property for equivariant maps from the space of continuous functions into the space of essentially bounded measurable functions on the boundary.
This work is devoted to the study of this particular uniqueness phenomenon and several of its applications. As we will see below, this  property is not an exclusive feature of certain boundary actions, and it does appear in setups with quite different behavior.

More precisely, this work is around the following notion.
Given a locally compact second countable (lcsc) group $G$, we denote by $\Op_G$ the category of all unital $G$-$C^*$-algebras and $G$-$W^*$-algebras where the morphisms are $G$-equivariant ucp maps. 
\begin{defn*}[Definition \ref{def:tight}]
We say a $C^*$-inclusion $\inc{\sA}{\sB}$ of objects $\sA, \sB\in \Op_G$ is \emph{$G$-tight} if the inclusion map is the unique $G$-equivariant ucp map from $\sA$ to $\sB$.
\end{defn*}
This property has already been exploited in some previous work. 
When $\sA$ is a commutative $C^*$-algebra, and $\sB$ a commutative von Neumann algebra, this coincides with Furman's notion of alignment systems \cite{Furm08}, a key concept in his work on rigidity of homogeneous actions of semisimple groups.
Around the same time, in a completely different context, Ozawa \cite{Oza07} proved that for a quasi-invariant and doubly-ergodic measure $\nu$ on the Gromov boundary $\partial \bF_n$ of the free group, the inclusion $C(\partial \bF_n)\subset L^\infty(\partial \bF_n, \nu)$ is $\bF_n$-tight. He used this property to prove a nuclear embedding result for the reduced $C^*$-algebra of the free group.

As mentioned earlier, we have the following fact.
\begin{thm*}[Theorem \ref{thm:USB->rigid}]
Let $G$ be a lcsc group, and let $\mu\in\pr(G)$ be an admissible  probability measure on $G$. Suppose $X$ is a minimal compact $G$ space that admits a unique $\mu$-stationary probability
$\nu$ such that $(X,\nu)$ is a $\mu$-boundary. Then the canonical embedding $C(X)\subset L^\infty(X, \nu)$ is $G$-tight.
\end{thm*}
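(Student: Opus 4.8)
The plan is to show that any $G$-equivariant ucp map $\Phi\colon C(X)\to L^\infty(X,\nu)$ must coincide with the canonical inclusion $\iota$. The key observation is that such a $\Phi$ should be regarded as a way of producing a new family of measures on $X$, and stationarity of $\nu$ will force this family to be essentially constant. Concretely, first I would dualize: for $\nu$-a.e.\ $x\in X$, the functional $f\mapsto \Phi(f)(x)$ is a state on $C(X)$, hence given by a probability measure $\beta_x\in\pr(X)$. This produces a measurable map $x\mapsto \beta_x$, and the ucp/equivariance of $\Phi$ translates into $\beta_{gx}=g_*\beta_x$ for $\nu$-a.e.\ $x$ and all $g$ (using separability of $G$ to handle the null sets).

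Next I would extract stationarity. Integrating, set $\bar\beta=\int_X \beta_x\,d\nu(x)\in\pr(X)$. The $G$-equivariance of $\Phi$ together with the $\mu$-stationarity of $\nu$ gives $\mu*\bar\beta = \int_G g_*\bar\beta\,d\mu(g) = \int_G\int_X g_*\beta_x\,d\nu(x)\,d\mu(g) = \int_X\int_X \beta_{gx}\,d\mu(g)\,d\nu(x)$, and since $\int_G \beta_{gx}\,d\mu(g)$ relates back to an integral against the stationary measure, one concludes $\bar\beta$ is itself $\mu$-stationary. By the uniqueness hypothesis, $\bar\beta=\nu$.

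Now I would use the $\mu$-boundary (= $\mu$-proximality) property of $(X,\nu)$. The map $x\mapsto\beta_x$ is a $G$-equivariant measurable map from $(X,\nu)$ into $\pr(X)$; composing with the barycentre or using the $\mu$-boundary structure, the measures $\beta_x$ are obtained as weak-$*$ limits along $\mu$-random walk trajectories of $g_n^{-1}{}_*\nu$-type pushforwards, which by proximality of the boundary converge to point masses. The clean way to phrase this: the Poisson boundary being $(X,\nu)$, every $G$-equivariant measurable map $(B,\nu_B)\to\pr(X)$ with barycentre $\nu$ is $\nu$-a.e.\ a Dirac mass, because $\pr(X)$-valued boundary maps with the correct barycentre must agree with the canonical one $x\mapsto\delta_x$ by the maximality/universality of the Poisson boundary (or directly: $\mu$-proximality says $G$ contracts $\pr(X)$ towards $\delta$'s along a.e.\ trajectory). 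Hence $\beta_x=\delta_x$ for $\nu$-a.e.\ $x$, which says exactly $\Phi(f)(x)=f(x)$ a.e., i.e.\ $\Phi=\iota$.

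The main obstacle I anticipate is the measure-theoretic bookkeeping in the middle step: making the map $x\mapsto\beta_x$ genuinely $G$-equivariant (not just equivariant modulo $x$-dependent and $g$-dependent null sets) and identifying $\bar\beta$ with a $\mu$-stationary measure require care with the disintegration and with Fubini across the null sets, using second countability of $G$ and the fact that $C(X)$ is separable so that one can work with a countable dense subalgebra. The final contraction step is conceptually the heart of the matter but is essentially the defining property of a $\mu$-boundary (proximality), so once the equivariant $\pr(X)$-valued map is in hand, invoking that property — or the universal property of the Poisson boundary — closes the argument.
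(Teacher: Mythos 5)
Your proposal follows essentially the same route as the paper: Lemma \ref{lem:quasi-factor} converts the ucp map into a $G$-equivariant measurable map $x\mapsto\beta_x$ into $\pr(X)$, the $\mu$-proximality coming from unique stationarity forces the $\beta_x$ to be Dirac masses (the paper cites Margulis for this), and the uniqueness of the measurable $G$-self-map of a $\mu$-boundary (Lemma \ref{lem:normal-self-tight}) then pins down $\beta_x=\delta_x$. Your intermediate barycentre computation showing $\bar\beta=\nu$ is a harmless extra, and your final identification of the Dirac masses is justified by the same contraction-along-trajectories mechanism the paper relies on.
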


The tightness property becomes particularly fruitful in combination with the notion of Zimmer amenability. 

\begin{thm*}[Theorem \ref{thm:Zamen-int-fact}]
Let $G$ be a lcsc group and let $\mu\in\pr(G)$ be an admissible  measure such that the Furstenberg-Poisson Boundary $(B, \nu)$ of $(G, \mu)$ has a uniquely stationary compact model. Let $(Y, \eta)$ be a $(G, \mu)$-space and let $\phi\colon (\tilde Y, \tilde \eta)\to (Y, \eta)$ be the standard cover in the sense of Furstenberg--Glasner. 
If $(\tilde Y, \tilde \eta) \xrightarrow{\varphi} (Z,\om) \xrightarrow{\psi} (Y, \eta)$ are measurable $G$-maps such that $\psi\circ\varphi=\phi$ and $(Z,\om)$ is Zimmer amenable, then $(\tilde Y, \tilde \nu) \overset{\varphi}{\cong} (Z,\om)$.
\end{thm*}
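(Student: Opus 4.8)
The plan is to combine the tightness of the inclusion $C(B) \subset L^\infty(B,\nu)$ coming from Theorem \ref{thm:USB->rigid} with the universal property of the standard cover and the operator-algebraic characterization of Zimmer amenability (existence of equivariant conditional expectations / ucp splittings onto boundary pieces).

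First I would recall the structure of the standard cover $\phi\colon(\tilde Y, \tilde\eta) \to (Y,\eta)$ in the sense of Furstenberg--Glasner: $\tilde Y$ fibers over the Furstenberg-Poisson boundary $B$, and in fact $(\tilde Y, \tilde\eta)$ is the maximal extension of $(Y,\eta)$ that splits as a relatively measure-preserving... no — rather, $\tilde Y$ is characterized by a universal property among extensions of $Y$ with a $G$-map to $B$. Let me think about this differently.

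Let me reconsider. Let $X$ denote the uniquely stationary compact model of $B$, so $C(X) \subset L^\infty(X,\nu) = L^\infty(B)$ is $G$-tight. Given the tower $(\tilde Y,\tilde\eta) \xrightarrow{\varphi} (Z,\om) \xrightarrow{\psi} (Y,\eta)$ with $\psi\circ\varphi = \phi$, the goal is to show $\varphi$ is an isomorphism. The plan has two halves.

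**Step 1: $Z$ maps to $B$.** Since $(Z,\om)$ is Zimmer amenable (as a $G$-space, equivalently the $G$-action on $L^\infty(Z,\om)$ is amenable in Zimmer's sense), there is a $G$-equivariant ucp (indeed unital positive) map, or conditional expectation, from $L^\infty(G \times Z)$-type objects... more usefully: Zimmer amenability of $(Z,\om)$ gives that $L^\infty(Z,\om)$ admits a $G$-equivariant ucp map from $C(X)$ — because $X$, being a compact $G$-space (a boundary), and $L^\infty(Z)$ being an amenable $G$-$W^*$-algebra, the space of $G$-equivariant ucp maps $C(X) \to L^\infty(Z,\om)$ is nonempty (this is essentially the defining property: amenable actions admit equivariant means, hence equivariant ucp maps from any $G$-$C^*$-algebra into them; here applied to the universal boundary piece, or directly one uses that $X$ being a $G$-boundary every such equivariant ucp map has image in a "boundary-like" subalgebra). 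Fix such a map $\theta\colon C(X) \to L^\infty(Z,\om)$.

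**Step 2: use the standard cover's universal property and tightness.** Compose with $\varphi^*\colon L^\infty(Z,\om) \to L^\infty(\tilde Y,\tilde\eta)$ to get a $G$-equivariant ucp map $\varphi^* \circ \theta\colon C(X) \to L^\infty(\tilde Y,\tilde\eta)$. On the other hand, $\phi$ being the standard cover means $\tilde Y$ carries a canonical $G$-map $\pi\colon \tilde Y \to B = (X,\nu)$, giving $\pi^*\colon L^\infty(X,\nu) \to L^\infty(\tilde Y,\tilde\eta)$; restricting to $C(X)$ gives another $G$-equivariant ucp map $C(X) \to L^\infty(\tilde Y,\tilde\eta)$. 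I expect the crux is to argue these two maps have the same range and are intertwined, pushing the equality problem down to $Z$. Concretely: by the universal property of the standard cover, any $G$-equivariant ucp map from $C(X)$ into any intermediate object $L^\infty(Z)$ of the tower $L^\infty(Y) \subset L^\infty(Z) \subset L^\infty(\tilde Y)$ that is compatible with the boundary map must "come from" the copy of $L^\infty(B)$ inside $L^\infty(\tilde Y)$; then $G$-tightness of $C(X) \subset L^\infty(X,\nu)$ forces uniqueness, so the subalgebra $\theta(C(X))'' \subset L^\infty(Z,\om)$ generates (a copy of) $L^\infty(B)$, and $\varphi$ splits. Combined with $\psi\circ\varphi = \phi$ and the fact that $\tilde Y$ is the \emph{maximal} such cover (so nothing sits strictly between $L^\infty(Z)$ with its boundary map and $L^\infty(\tilde Y)$), one concludes $\varphi^*$ is surjective, hence (being also injective as $\varphi$ is a factor map) an isomorphism $L^\infty(\tilde Y,\tilde\eta) \cong L^\infty(Z,\om)$, i.e.\ $(\tilde Y,\tilde\eta) \overset{\varphi}{\cong} (Z,\om)$.

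**Main obstacle.** The hard part will be Step 2: turning "Zimmer amenability gives \emph{some} equivariant ucp map $C(X)\to L^\infty(Z)$" into "$L^\infty(Z)$ \emph{contains} a $G$-equivariant copy of $L^\infty(B)$ that realizes the boundary map $Z \to B$ factoring $\pi$", and doing so \emph{canonically} enough that $G$-tightness applies. The subtlety is that a priori $\theta$ need not be a $*$-homomorphism, and need not land in the "right" subalgebra; one must use the boundary structure (e.g.\ that $X$ is a minimal strongly proximal $G$-space, or the relative version over $Y$ via the standard cover construction) to upgrade $\theta$ — or to show its multiplicative domain / Poisson-transform image is forced by uniqueness of stationary measure to be all of $L^\infty(B)$. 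A clean way is: apply tightness not to $C(X)\subset L^\infty(X,\nu)$ directly but to the \emph{relative} inclusion over the base, showing the Zimmer-amenable $Z$ admits at most one equivariant ucp map to $L^\infty(\tilde Y)$ over $L^\infty(Y)$ into the boundary fiber, forcing $Z = \tilde Y$. Verifying that the relative tightness survives (i.e.\ that Theorem \ref{thm:USB->rigid} relativizes correctly along the standard cover) is where the real work lies; everything else is formal manipulation of ucp maps and the Furstenberg--Glasner universal property.
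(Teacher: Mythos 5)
Your plan assembles the right ingredients (USB implies tightness of $C(B)\subset L^\infty(B,\nu)$, the quasi-factor correspondence, Margulis' delta-measure rigidity, Zimmer amenability as a source of equivariant ucp maps), but it does not close, and the places where you say ``the real work lies'' are exactly where the paper's argument has content your plan is missing. First, the tightness actually needed is not that of $C(B)\subset L^\infty(B,\nu)$ but of $C(B)\subset L^\infty(\tilde Y,\tilde\eta)$, and the mechanism producing it is Lemma~\ref{lem:faith-cond-exp-->G-rigid}: a tight inclusion followed by a faithful $G$-equivariant conditional expectation stays tight. To invoke it you must first show that $\tilde Y$ is a relatively measure-preserving extension of $B$ \emph{itself} --- not merely of the boundary piece $(X,\theta)$ supplied by the Furstenberg--Glasner structure theorem, which a priori is a proper factor of $B$ and need not carry a tight measure. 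The paper does this by noting $\tilde Y$ is weakly Zimmer amenable (being an extension of $Z$), producing a ucp map $C(B)\to L^\infty(\tilde Y,\tilde\eta)$, converting it via Lemma~\ref{lem:quasi-factor} and \cite[Corollary 2.10(a)]{Margulis-book-91} into an honest factor map $\rho\colon \tilde Y\to B$, and then proving (Lemma~\ref{lem:structure-proximal}, via uniqueness of \emph{normal} equivariant maps on boundaries) that $\rho$ is compatible with $\phi'$ and relatively measure-preserving. Your plan conflates $X$ with $B$ and never produces the conditional expectation.

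Second, your Step 2 points the Zimmer-amenability ucp map in the less useful direction. Instead of upgrading a ucp map $C(X)\to L^\infty(Z,\omega)$ to a copy of $L^\infty(B)$ and then appealing to ``maximality'' of the standard cover, one takes a $G$-equivariant ucp map $\Psi\colon L^\infty(\tilde Y,\tilde\eta)\to L^\infty(Z,\omega)$, which exists precisely because $Z$ is (weakly) Zimmer amenable. Tightness of $C(B)\subset L^\infty(\tilde Y,\tilde\eta)$ forces $\Psi|_{C(B)}$ to be the inclusion, so $L^\infty(B,\nu)\subseteq \varphi^*(L^\infty(Z,\omega))$, and the joining property of the standard cover gives $L^\infty(\tilde Y,\tilde\eta)=L^\infty(B,\nu)\vee L^\infty(Y,\eta)\subseteq \varphi^*(L^\infty(Z,\omega))$; this is Lemma~\ref{lem:cotight->noRelZimInt} applied to the co-tight inclusion $L^\infty(Y,\eta)\subset L^\infty(\tilde Y,\tilde\eta)$. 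The multiplicativity worry you raise is genuine but is resolved by the delta-measure argument; the unresolved gaps are the relative measure-preservation over $B$ and the generation/co-tightness step, neither of which follows from the uniqueness statement of Theorem~\ref{thm:USB->rigid} alone.
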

For discrete groups $\G$, we prove a noncommutative version of this. Namely, we show that under the same conditions, there are no injective von Neumann algebras $M$ satisfying $\G\ltimes L^\infty(Y, \eta)\subseteq M \subsetneq \G\ltimes L^\infty(\tilde Y, \tilde \eta)$ (Corollary~\ref{cor:no-inj-int-two-case}).

\medskip

 Examples of tight inclusions involving noncommutative $C^*$-algebras include the embedding of tight $\G$-$C^*$-algebras in their associated crossed products.
This, for instance, yields the following maximal injectivity result.

\begin{thm*}[Corollary~\ref{cor:max-inj-vn}]
Let $\G$ be a discrete group and $\mu\in\pr(G)$ a generating measure such that the Furstenberg-Poisson Boundary $(B, \nu)$ of $(\G, \mu)$ has a uniquely stationary compact model. Let $\G\curvearrowright (Z, m)$ be a measure-preserving action. 
Then the von Neumann algebra $\G\ltimes L^\infty(B \times Z , \nu \times m)$ is maximal injective in $\G\ltimes \big(\cB(L^2(B, \nu))\overline{\otimes}L^\infty(Z, m)\big)$.
\end{thm*}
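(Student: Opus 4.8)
The plan is to establish the two halves of the assertion: that $N:=\G\ltimes L^\infty(B\times Z,\nu\times m)$ is injective, and that it is not properly contained in any injective von Neumann subalgebra of $M:=\G\ltimes\big(\cB(L^2(B,\nu))\overline{\otimes}L^\infty(Z,m)\big)$. First, $N\subseteq M$ because $L^\infty(B\times Z,\nu\times m)=L^\infty(B,\nu)\overline{\otimes}L^\infty(Z,m)$ is a $\G$-invariant von Neumann subalgebra of $\cB(L^2(B,\nu))\overline{\otimes}L^\infty(Z,m)$. For injectivity of $N$, recall that the boundary action $\G\curvearrowright(B,\nu)$ is Zimmer amenable and that a product of a Zimmer amenable action with any nonsingular action is again Zimmer amenable; hence $\G\curvearrowright(B\times Z,\nu\times m)$ is Zimmer amenable and $N$ is injective. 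I would also record the identification $M\cong\cB(L^2(B,\nu))\overline{\otimes}\big(\G\ltimes L^\infty(Z,m)\big)$, obtained by absorbing the (inner) Koopman part of the action on $\cB(L^2(B,\nu))$ into the crossed product: this shows $M$ is itself injective only when $\G\curvearrowright(Z,m)$ is amenable, so the maximality statement is not automatic.

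Suppose now $N\subseteq P\subseteq M$ with $P$ injective; we must show $P=N$. Let $\cH:=\ell^2\G\otimes L^2(B,\nu)\otimes L^2(Z,m)$ carry the covariant representation defining $M$. Injectivity of $P$ yields a unital completely positive idempotent $\Phi\colon\cB(\cH)\to P$, and by Tomiyama's theorem $E:=\Phi|_M\colon M\to P$ is a conditional expectation. Since $N\subseteq P$ lies in the multiplicative domain of $\Phi$, the map $E$ is an $N$-bimodule map, and since the canonical unitaries $u_g$ implementing the $\G$-action on $M$ belong to $N$, the map $E$ intertwines the automorphisms $\operatorname{Ad}(u_g)$, i.e.\ $E$ is $\G$-equivariant. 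Writing $\cC:=\cB(L^2(B,\nu))\overline{\otimes}L^\infty(Z,m)$ for the coefficient algebra of $M$ and $\pi\colon\cC\hookrightarrow M$ for its embedding, and using that $M$ is generated by $\pi(\cC)$ together with the $u_g\in N$ (and the $N$-bimodularity of $E$), proving $E(M)\subseteq N$—which gives $P=E(P)\subseteq N$—reduces to showing that the $\G$-equivariant ucp map $\psi:=E\circ\pi\colon\cC\to M$ has range contained in $N$.

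Here Theorem~\ref{thm:USB->rigid} enters. The restriction of $\psi$ to $L^\infty(B\times Z)\subseteq\cC$ is the inclusion into $N$, so $\psi$ fixes pointwise the $\G$-$C^*$-subalgebra $C(X)\subseteq L^\infty(X,\nu)=L^\infty(B,\nu)\subseteq\cC$, where $C(X)$ acts by multiplication on $L^2(X,\nu)=L^2(B,\nu)$. Hence $C(X)$ lies in the multiplicative domain of $\psi$ and $\psi$ is a $C(X)$-bimodule map. Now $C(X)\subseteq L^\infty(X,\nu)$ is $\G$-tight by Theorem~\ref{thm:USB->rigid}; moreover $X$ carries a unique $\mu$-stationary probability measure, so every $\mu$-stationary state on $C(X)$ equals $\nu$, and a Poisson-transform argument for $\mu$-stationary states on $\cB(L^2(B,\nu))$ (valid because $(B,\nu)$ is the Furstenberg--Poisson Boundary), combined with the $\G$-equivariant fibrewise integration $L^\infty(B\times Z)\to L^\infty(B,\nu)$ afforded by the invariant measure $m$, constrains the auxiliary $\G$-equivariant ucp maps out of $\cC$ that one needs. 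Running this rigidity through the $C(X)$-bimodularity and the $\G$-equivariance of $\psi$ should force the Fourier components of $\psi(a)$—relative to the canonical $\G$-equivariant conditional expectation $M\to\cC$—to lie in $L^\infty(B\times Z)$ for every $a\in\cC$, that is $\psi(\cC)\subseteq N$, which completes the argument.

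The crux, and the step I expect to require real care, is precisely this transport of the commutative, $C^*$-algebraic tightness of $C(X)\subseteq L^\infty(X,\nu)$ up to the von Neumann crossed products, together with the bookkeeping needed to control the possible non-normality of $\Phi$ and the higher Fourier behaviour of elements of $P$. The natural way to organize it is to first isolate a general principle: if $\sA\subseteq\sB$ is a $\G$-tight inclusion of $\G$-operator algebras for which $\G\ltimes\sA$ is injective, then $\G\ltimes\sA$ is maximal injective in $\G\ltimes\sB$—the point being that the canonical conditional expectation of a crossed product onto its coefficient algebra is $\G$-equivariant with multiplicative domain exactly the coefficient algebra, so that the expectation $E$ above is pinned down once tightness is invoked—and then to obtain the Corollary by specializing this principle to $\sB=\cB(L^2(B,\nu))\overline{\otimes}L^\infty(Z,m)$ and a suitable tight $\G$-operator subalgebra $\sA$ with $\G\ltimes\sA=N$, whose injectivity is exactly what the first paragraph provides.
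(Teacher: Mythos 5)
Your opening two paragraphs are fine: injectivity of $N=\G\ltimes L^\infty(B\times Z,\nu\times m)$ via Zimmer amenability, and the reduction to showing that the equivariant conditional expectation $E\colon M\to P$ obtained from injectivity of $P$ satisfies $E(M)\subseteq N$. The problem is the third paragraph, which you yourself flag as the crux: ``running this rigidity \dots should force the Fourier components of $\psi(a)$ to lie in $L^\infty(B\times Z)$'' is exactly the statement that requires proof, and no argument is given. Moreover, the general principle you propose to isolate cannot be instantiated here. Tightness of $\inc{\sA}{\sB}$ forces $\sA$ to be self-tight (Proposition~\ref{prop:tight->suablg->tight}), and for nontrivial $Z$ no $\G$-invariant weak*-dense subalgebra $\sA$ of $L^\infty(B\times Z,\nu\times m)$ can be self-tight: the normal equivariant ucp map $\id\otimes\int_Z\cdot\,dm$ (equivariant because $m$ is invariant) is not the identity, so $\inc{\sA}{\cB(L^2(B,\nu))\overline{\otimes}L^\infty(Z,m)}$ is never tight for any $\sA$ with $\G\ltimes\sA=N$; the paper also notes after Theorem~\ref{thm:USB->rigid} that $L^\infty(B,\nu)$ itself is never self-tight. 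The structural obstruction to your direct approach is that tightness only controls ucp maps \emph{out of} the small tight algebra; since $C(X)\subseteq N\subseteq P$ already, knowing $E|_{C(X)}=\id$ yields nothing new about $P$. Tightness bites only when the tight subalgebra is \emph{not yet known} to lie in the intermediate object, so that the expectation forces it in and it then generates the top --- and indeed, if $\inc{N}{M}$ were co-tight, the machinery of Lemma~\ref{lem:cotight->noRelZimInt} would prove $P=M$, not $P=N$.

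The paper resolves precisely this by a commutant trick: taking commutants on $\ell^2(\G)\otimes L^2(B\times Z,\nu\times m)$ reverses the inclusion to $\G\ltimes L^\infty(Z,m)\subseteq N'\subseteq\G\ltimes L^\infty(B\times Z,\nu\times m)$ with $N'$ injective. In this flipped picture the top algebra is a co-tight extension of the bottom via the tight subalgebra $C(X)$ (Theorem~\ref{thm:USB->rigid} together with Lemma~\ref{lem:faith-cond-exp-->G-rigid} and Corollary~\ref{cor:tight->crsprdt-tight}), so the expectation onto $N'$ must fix $C(X)$, hence $C(X)\subseteq N'$, hence $N'$ contains $C(X)\vee\big(\G\ltimes L^\infty(Z,m)\big)=\G\ltimes L^\infty(B\times Z,\nu\times m)$ (this is Proposition~\ref{prop:nc-no-Z-int-gen}); taking commutants again gives $P=N$. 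You should either adopt this duality step or supply an actual argument for the ``Fourier component'' claim; as written, the proof is incomplete at its central point.
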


A special case of the above in which $\G$ is an irreducible lattice in a centerless higher rank lattice, and $Z$ is trivial, yields a stronger conclusion than Suzuki's maximal injectivity result \cite[Corollary 3.8]{Suz18}. In particular, it shows that this maximal injectvity is not a higher rank phenomenon but rather follows from the wider framework of tightness. 
Consequently, this provides a large class of new examples of maximal injective von Neumann algebras (see comments after Corollary~\ref{cor:max-inj-vn}).

However, as shown in Corollary \ref{cor:rig-not->factors} and Theorems \ref{thm:specgapprop-equiv} and \ref{thm:parsubgrp->rigmsre}, this notion of tightness is not bound to only certain boundary actions, it is more general even in the commutative case, and there are examples of tight actions with properties far from boundary actions.\\


Next, we fix our notation and briefly  review some of the definitions and basic facts that will be used in the rest of the paper.

Throughout the paper, unless otherwise stated, $G$ is a lcsc group and $\G$ denotes a countable discrete group. 
We write $G\act X$ to mean a continuous action of $G$ on a compact Hausdorff space $X$ by homeomorphisms. In this case we say $X$ is a compact $G$-space.
Given $G\act X$ and $G\act Y$, we say $Y$ is a ($G$-)factor of $X$, or that $X$ is a ($G$-)extension of $Y$, if there is a continuous map $\varphi$ from $X$ onto $Y$ that is $G$-equivariant, that is $\varphi(gx)=g\varphi(x)$ for all $g\in G$ and $x\in X$.

Given $G\act X$ and $\nu\in \pr(X)$, the {Poisson transform} (associated to $\nu$) is the map $\cP_\nu\colon  C(X) \to C_b(G)$ defined by $\cP_\nu(f) (g) = \int_X f(gx)d\nu(x)$ for $f\in C(X)$ and $g\in G$, where $C_b(G)$ denotes the space of all bounded continuous functions on $G$. This is obviously a continuous unital linear positive map which is also $G$-equivariant.

We use similar standard terminology for measurable actions of a lcsc group $G$. All measure spaces considered here are assumed to be standard Borel probability spaces. 
We write $G\act (X, \nu)$ to mean a measurable action of $G$ on a standard Borel probability space $(X, \nu)$ by measure isomorphisms; in particular, in this setup, the measure $\nu$ is quasi-invariant, that is, $g\nu\sim \nu$ for any $g\in G$. In this case we say $(X, \nu)$ is a  probability $G$-space.
Given $G\act (X, \nu)$ and $G\act (Y, \eta)$, we say $(Y, \eta)$ is a ($G$-)factor of $(X, \nu)$, or that $(X, \nu)$ is a ($G$-)extension of $(Y, \eta)$, if there is a $G$-equivariant measurable map $\varphi$ from $X$ to $Y$ such that $\eta=\varphi_*\nu$.
In this case, the map $\varphi$ yields a canonical $G$-equivariant von Neumann algebra embedding $\varphi^*: L^\infty(Y, \eta)\to L^\infty(X, \nu)$.

For an action $G\act (X, \nu)$, similarly to the continuous case, we denote by $\cP_\nu\colon  L^\infty(X) \to L^\infty(G)$ the Poisson transform $\cP_\nu(f) (g) = \int_X f(gx)d\nu(x)$, which is a $G$-equivariant normal unital positive linear map.

Let $G\act (X, \nu)$, and $\mu\in\pr(G)$. We say $\nu$ is $\mu$-stationary if $\mu*\nu = \nu$, where $\mu*\nu$ is the convolution of the measures. In this case we write $(G,\mu)\act (X, \nu)$ and say $(X, \nu)$ is a $(G,\mu)$-space.

A compact model for a probability $G$-space $(Y, \eta)$ is a compact $G$-space $X$ and a quasi-invariant $\nu\in\pr(X)$ such that $(X, \nu)$ is isomorphic to $(Y, \eta)$ as probability $G$-spaces.

We also consider noncommutative actions in this paper, namely, actions of $G$ on $C^*$ and von Neumann algebras. All $C^*$-algebras considered here are assumed to be unital. By a $G$-$C^*$-algebra (respectively, $G$-von Neumann algebra) we mean a unital $C^*$-algebra $\sA$ on which $G$ acts continuously by $*$-automorphisms. Continuity in this setup is with respect to the point-norm topology (respectively, point-weak* topology) of $\sA$. 

Given a discrete group $\G$ and a $\G$-$C^*$ or $\G$-von Neumann algebra $\sA$, a \emph{covariant representation} of $(\G, \sA)$ is a pair $(\pi,\rho)$, where $\pi$ is a unitary representation of $\G$ on Hilbert space $\cH_\pi$, and $\rho\colon \sA\to \cB(\cH_\pi)$ is a $\G$-equivariant representation of $\sA$, where $\G\act \cB(\cH_\pi)$ by inner automorphisms ${\rm Ad}_{\pi(g)}$, $g\in\G$. In this case, we let $\G\ltimes_{\scriptscriptstyle\tiny\pi}^{\scriptscriptstyle\tiny\rho}\sA$ be the $C^*$ or von Neumann algebra generated by the set $\{\rho(a)\pi(g):a\in \sA, g\in \G\}$, depending on $\sA$. 
In the case of regular covariant representation, we use the notation $\G\ltimes\sA$ for either the reduced $C^*$-crossed product or the von Neumann algebra crossed product of the action, again depending on $\sA$.

We refer the reader to \cite{BrownOz} for the definitions and details concerning these constructions and their properties.

We often in this paper consider equivariant ucp maps from $C^*$-algebras into von Neumann algebras. In view of the following known fact, these maps should be considered as  noncommutative counterparts of \emph{quasi-factor maps} in the sense of Glasner~\cite[Chapter 8]{glasner2003ergodic}. We omit the proof of the following, which can be found in \cite{Oza07}. The latter argument was written for a special example, but as noted in \cite{BasRadul20}, the same argument works in general; (see also \cite[Proposition 4.10]{BBHP}).
\begin{lem}\label{lem:quasi-factor}
Let $X$ be a compact metric ($G$-space), and $(Y, \eta)$ a standard probability ($G$-)space. Then there is a one-to-one correspondence between ($G$-equivariant) $\eta$-measurable maps $Y\to \pr(X)$ and equivariant ucp maps $C(X) \to L^{\infty}(Y,\eta)$.
\end{lem}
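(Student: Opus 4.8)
The plan is to establish the correspondence in both directions, using the identification of $\pr(X)$ with the state space of $C(X)$ and a disintegration/measurable-selection argument. Starting from a measurable map $\beta\colon Y\to \pr(X)$, I would define $\Phi\colon C(X)\to L^\infty(Y,\eta)$ by $\Phi(f)(y) = \int_X f\, d\beta(y) = \langle \beta(y), f\rangle$. First I would check this is well-defined: for fixed $f\in C(X)$ the function $y\mapsto \langle\beta(y),f\rangle$ is measurable because $\beta$ is measurable and $\pr(X)$ carries the weak* topology (on which the evaluation pairings $\mu\mapsto\langle\mu,f\rangle$ are continuous), and it is bounded by $\|f\|_\infty$. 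Linearity and positivity are immediate from the corresponding properties of each state $\beta(y)$, and $\Phi(1)=1$ since each $\beta(y)$ is a probability measure; complete positivity is automatic because the domain $C(X)$ is commutative, so ucp reduces to unital positive. When $Y$ and $X$ carry $G$-actions, $G$-equivariance of $\Phi$ translates, via the formula $\langle g\mu,f\rangle = \langle \mu, g^{-1}f\rangle$, exactly into the $G$-equivariance condition $\beta(gy) = g\beta(y)$ for $\eta$-a.e.\ $y$.

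For the reverse direction, given an equivariant ucp map $\Phi\colon C(X)\to L^\infty(Y,\eta)$, I would produce the measurable map $Y\to\pr(X)$ by a disintegration argument. Since $X$ is compact metric, $C(X)$ is separable; fix a countable dense $\bQ$-subalgebra (or just a countable dense subset) $\{f_n\}\subset C(X)$. For each $n$ choose a bounded Borel representative of $\Phi(f_n)\in L^\infty(Y,\eta)$; off a single $\eta$-null set $N$ one arranges simultaneously that for every $y\notin N$ the assignment $f_n\mapsto \Phi(f_n)(y)$ is linear over $\bQ$, positive, unital, and norm-bounded by $\|f_n\|_\infty$ on the countable set — each of these is a countable family of a.e.\ identities/inequalities, hence holds off a null set. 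By uniform continuity this extends uniquely to a state $\beta(y)$ on $C(X)$, i.e.\ to $\beta(y)\in\pr(X)$ by Riesz representation, and $y\mapsto\langle\beta(y),f\rangle$ is measurable for each $f$ (it is a pointwise limit of the measurable functions $\langle\beta(y),f_n\rangle$), which is exactly measurability of $\beta$ into $(\pr(X),\text{weak*})$ since the Borel $\sigma$-algebra there is generated by the pairings against a countable dense set of $f$'s. One then checks $\Phi$ is recovered: $\Phi(f)(y)=\langle\beta(y),f\rangle$ a.e., first on the $f_n$ by construction and then on all of $C(X)$ by density and continuity of $\Phi$. In the equivariant case, equivariance of $\Phi$ gives, for each fixed $g$ and each $f$, that $\langle\beta(gy),f\rangle = \langle g\beta(y),f\rangle$ for $\eta$-a.e.\ $y$; running this over the countable dense family and, if $G$ is not discrete, over a countable dense subset of $G$ together with a Fubini argument in $G\times Y$, yields $\beta(gy)=g\beta(y)$ for a.e.\ $(g,y)$, hence (after discarding a null set) an honestly $G$-equivariant map.

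Finally I would note the two constructions are mutually inverse: plugging $\beta$ into the first construction and then extracting via the second returns $\beta$ a.e.\ by uniqueness of the Riesz representation, and conversely $\Phi\mapsto\beta\mapsto\Phi$ is the identity by the recovery step above. The main obstacle is purely the measure-theoretic bookkeeping in the reverse direction — assembling countably many a.e.\ statements into a single genuine (everywhere-defined, everywhere-equivariant) map $Y\to\pr(X)$ — rather than anything conceptually deep; the separability of $C(X)$ coming from $X$ compact metric is what makes this routine. Since this is standard, I would, following the convention of the excerpt, refer to \cite{Oza07} (and \cite{BasRadul20}, \cite[Proposition 4.10]{BBHP}) for the details and omit the full write-up.
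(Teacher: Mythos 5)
Your argument is correct and is precisely the standard disintegration/separability argument that the paper itself omits, deferring to \cite{Oza07} (noting, as in \cite{BasRadul20} and \cite[Proposition 4.10]{BBHP}, that Ozawa's argument works in general); both directions, the handling of complete positivity via commutativity of $C(X)$, and the Fubini correction of a.e.\ equivariance for non-discrete $G$ are exactly as in those references. The only point to keep in view is that the bijection is with $\eta$-a.e.\ equivalence classes of measurable maps $Y\to\pr(X)$, which your mutual-inverse step already accounts for.
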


\subsection*{Boundary actions}
The most natural examples in our context are topological and measurable boundary actions in the sense of Furstenberg. We briefly review the definitions and refer the reader to \cite{Furs73, Glasner-book, Furm-book, HartKal} for more details.

An action $G\act X$ is said to be a {topological boundary action}, and $X$ is said to be a topological $G$-boundary, if for every $\nu\in \pr(X)$ and $x\in X$ there is a net $g_i$ of elements of $G$ such that $g_i \nu \to \delta_x$ in the weak* topology, where $\delta_x$ is the Dirac measure at $x$.
It was shown by Furstenberg \cite[Proposition 4.6]{Furs73} that any lcsc group $G$ admits a unique (up to $G$-equivariant homeomorphism) maximal $G$-boundary $\partial_F G$ in the sense that every $G$-boundary $X$ is a $G$-factor of $\partial_F G$.

Measurable boundary actions are defined as follows. Let $\mu\in\pr(G)$ be an admissible measure, that is, $\mu$ is absolutely continues with respect to the Haar measure and is not supported in a proper closed subgroup, and let $\nu$ be a $\mu$-stationary probability measure on a metrizable $G$-space $X$. The action $(G, \mu)\act (X,\nu)$ is a {$\mu$-boundary action} if for almost every path $(\omega_k)\in G^{\bN}$ of the $(G, \mu)$-random walk, the sequence $\omega_k\nu$ converges to a Dirac measure $\delta_{x_\om}$.

An action $G\act (Y, \eta)$ is said to be a $\mu$-boundary action if it has compact model which is a $\mu$-boundary in the above sense.
Similarly to the topological case, there is a unique (up to $G$-equivariant measurable isomorphism) maximal $\mu$-boundary $(B , \nu)$, called the {Furstenberg-Poisson Boundary} of the pair $(G,\mu)$, in the sense that every $\mu$-boundary $(X,\eta)$ is a $G$-factor of $(B , \nu)$.

As mentioned in the introduction, the case of measurable $\mu$-boundaries with compact models supporting unique stationary measures, are of particular interest for us in this work. Following \cite[Definition 3.9]{HartKal}, we refer to such an action a $\mu$-USB.
The study of these systems was indeed initiated by Furstenberg in~\cite{Furs73}, where they were called {$\mu$-proximal actions}. These were further studied in \cite{Margulis-book-91, Glasner-Weiss-16}.

\section{Tight inclusions}\label{sec:tight-incl}
Let $G$ be a lcsc group. We denote by $\Op_G$ the category of all unital $G$-$C^*$-algebras and $G$-von Neumann algebras where the morphisms are $G$-equivariant ucp maps (not assumed normal even between von Neumann algebras).

If $\G$ is a countable discrete group, every $\G$-von Neumann algebra is also a $\G$-$C^*$-algebra, and therefore $\Op_{\G}$ is just the category of all unital $\G$-$C^*$-algebras.

Given $\sA\in\Op_{\G}$, we say $\sA$ is $\G$-injective if it is an injective object in the category $\Op_{\G}$. We refer the reader to \cite{Ham85} and \cite{KK17} for more details on this concepts and its connection to boundary actions.

A unital $C^*$-algebra $\sA$ is injective if it is injective in the category of unital $C^*$-algebras with ucp maps as morphisms, equivalently, $\G$-injective for the trivial group $\G=\{e\}$.

Note that all positive maps between commutative $C^*$-algebras are automatically completely positive, but for the sake of consistency, we keep assuming ucp for all our morphisms.

\begin{defn}\label{def:tight}
An inclusion $\inc{\sA}{\sB}$ of objects $\sA, \sB\in \Op_G$ is called \emph{$G$-tight} if the inclusion map is the unique $G$-equivariant ucp map from $\sA$ to $\sB$.
\end{defn}

We begin with few observations on the general properties of $G$-tight inclusions before getting to some basic examples. 

\begin{prop}\label{prop:tight->suablg->tight}
Suppose $\inc{\sA}{\sB}$ is a $G$-tight inclusion of objects in $\Op_G$. Then for any $\sC\in\Op_G$ with $\sA\subset \sC\subset \sB$, the inclusion $\inc{\sA}{\sC}$ is $G$-tight; in particular, $\inc{\sA}{\sA}$ is $G$-tight.
\begin{proof}
Every $G$-equivariant ucp map $\sA \to \sC$ is in particular a $G$-equivariant ucp map from $\sA$ to $\sB$. Thus $G$-tightness of $\inc{\sA}{\sB}$ implies $G$-tightness of $\inc{\sA}{\sC}$.
\end{proof}
\end{prop}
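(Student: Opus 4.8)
The plan is to reduce $G$-tightness of $\inc{\sA}{\sC}$ to $G$-tightness of the given inclusion $\inc{\sA}{\sB}$ by post-composing with the inclusion $\sC\subset\sB$. To set up notation, let $\iota_\sC\colon\sA\hookrightarrow\sC$, $\iota_\sB\colon\sA\hookrightarrow\sB$, and $j\colon\sC\hookrightarrow\sB$ denote the three inclusion maps, so that $\iota_\sB=j\circ\iota_\sC$. Here $j$ is a unital $*$-homomorphism intertwining the $G$-actions — the $G$-action on $\sC$ being the restriction of the one on $\sB$ — hence in particular a $G$-equivariant ucp map; and the classes of ucp maps and of $G$-equivariant maps are each closed under composition.

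Now let $\varphi\colon\sA\to\sC$ be an arbitrary $G$-equivariant ucp map. By the previous paragraph, $j\circ\varphi\colon\sA\to\sB$ is again a $G$-equivariant ucp map, so $G$-tightness of $\inc{\sA}{\sB}$ forces $j\circ\varphi=\iota_\sB=j\circ\iota_\sC$. Since $j$ is injective, being an inclusion of $C^*$-algebras, it is left-cancellable, whence $\varphi=\iota_\sC$. Thus $\iota_\sC$ is the only $G$-equivariant ucp map $\sA\to\sC$, i.e.\ $\inc{\sA}{\sC}$ is $G$-tight; the last assertion is then the special case $\sC=\sA$.

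I do not expect a genuine obstacle here: the statement is a purely formal consequence of the definition of tightness, the only (immediate) verifications being that post-composition with an inclusion preserves the class of $G$-equivariant ucp maps, and that an inclusion map is left-cancellable.
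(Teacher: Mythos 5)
Your proof is correct and is essentially the paper's own argument: a $G$-equivariant ucp map $\sA\to\sC$ is, after composing with the inclusion $\sC\hookrightarrow\sB$, a $G$-equivariant ucp map into $\sB$, so tightness of $\inc{\sA}{\sB}$ forces it to be the inclusion. Your additional remark about left-cancellability of the injective inclusion $j$ just makes explicit a step the paper leaves implicit.
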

We refer to an object $\sA$ for which $\inc{\sA}{\sA}$ is $G$-tight as a ($G$-)self-tight object.

\begin{lem}\label{lem:rigincl+inv->trv}
Suppose $\inc{\sA}{\sB}$ is a $G$-tight inclusion of objects in $\Op_G$. If $\sA$ admits a $G$-invariant state, then $\sA=\bC$.
\end{lem}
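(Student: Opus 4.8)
The plan is to use the $G$-invariant state to manufacture a second $G$-equivariant ucp map from $\sA$ into $\sB$ and then invoke tightness to force it to coincide with the inclusion, which will collapse $\sA$.

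\smallskip

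\noindent\textbf{Step 1: Build a candidate map.} Let $\omega\colon \sA\to\bC$ be a $G$-invariant state, i.e.\ $\omega(g\cdot a)=\omega(a)$ for all $g\in G$, $a\in\sA$. Since $\sB$ is unital and contains a copy of $\bC\cdot 1_\sB$ on which $G$ acts trivially, the composition
\[
\Phi\colon \sA \xrightarrow{\ \omega\ } \bC \xrightarrow{\ \lambda\mapsto \lambda 1_\sB\ } \sB
\]
is a ucp map (composition of ucp maps), and it is $G$-equivariant: for $g\in G$ and $a\in\sA$ we have $\Phi(g\cdot a)=\omega(g\cdot a)1_\sB=\omega(a)1_\sB = g\cdot(\omega(a)1_\sB)=g\cdot\Phi(a)$, using that $G$ fixes the scalars in $\sB$. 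So $\Phi$ is a legitimate morphism $\sA\to\sB$ in $\Op_G$.

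\smallskip

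\noindent\textbf{Step 2: Apply tightness.} By $G$-tightness of $\inc{\sA}{\sB}$, the only $G$-equivariant ucp map $\sA\to\sB$ is the inclusion $\iota$. Hence $\Phi=\iota$, i.e.\ $a=\omega(a)1_\sB$ for every $a\in\sA$. Since $\sA\subset\sB$ as $C^*$-algebras (so the inclusion is injective), this means every $a\in\sA$ equals a scalar multiple of the identity. Therefore $\sA=\bC\cdot 1 \cong \bC$, which is the desired conclusion.

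\smallskip

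\noindent I do not anticipate a serious obstacle here; the only point requiring a line of care is the verification that $\Phi$ is genuinely $G$-equivariant, which rests on the (automatic) fact that the $G$-action on any unital $G$-$C^*$- or $G$-von Neumann algebra fixes $\bC\cdot 1$ pointwise, so the scalar corner $\bC\cdot 1_\sB\subset\sB$ is a trivial $G$-subobject. One should also note that composing with the unital embedding $\bC\hookrightarrow\sB$ preserves complete positivity, which is clear. With these remarks the argument is complete.
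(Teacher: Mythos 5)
Your proof is correct and is essentially the paper's own argument: the paper likewise views the invariant state as a $G$-equivariant ucp map $\sA\to\bC\subset\sB$ and invokes tightness to conclude $a=\omega(a)1$ for all $a$, hence $\sA=\bC$. You have merely spelled out the equivariance check in more detail.
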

\begin{proof}
Suppose $\tau$ is a $G$-invariant state on $\sA$. Then $\tau$ is, in particular, a $G$-equivariant ucp map from $\sA$ to $\bC\subset\sB$. Hence $\tau=\id$ by $G$-tightness of the $\inc{\sA}{\sB}$, and this implies $\sA=\bC$.
\end{proof}

Recall that a lcsc group $G$ is said to be amenable if any continuous action of $G$ by 
affine homeomorphisms on a compact convex subset of a topological vector space has a fixed point.
\begin{cor}
If there is a $G$-tight inclusion $\inc{\sA}{\sB}$ of objects in $\Op_G$ with $\sA$ non-trivial, then $G$ is non-amenable.
\end{cor}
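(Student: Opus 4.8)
The plan is to argue by contraposition: I will show that if $G$ is amenable, then every $G$-tight inclusion $\inc{\sA}{\sB}$ in $\Op_G$ has $\sA = \bC$, which contradicts the non-triviality of $\sA$. The first step reduces this to a statement not involving $\sB$ at all: by Lemma~\ref{lem:rigincl+inv->trv}, if $\sA$ carries a $G$-invariant state then $G$-tightness of $\inc{\sA}{\sB}$ already forces $\sA=\bC$. So it suffices to prove the (standard) fact that an amenable $G$ admits an invariant state on every object of $\Op_G$.

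For the second step, fix a left-invariant mean $m$ on $L^\infty(G)$ — this exists precisely because $G$ is amenable — and fix a state $\omega_0$ on $\sA$, chosen normal in case $\sA$ is a $G$-von Neumann algebra. Using point-norm continuity of the $G$-action when $\sA$ is a $C^*$-algebra, or point-weak* continuity together with normality of $\omega_0$ when $\sA$ is a von Neumann algebra, the function $f_a\colon g\mapsto \omega_0(g^{-1}\cdot a)$ is continuous and bounded on $G$, hence lies in $L^\infty(G)$. Set $\omega(a) := m(f_a)$. Linearity is clear; positivity and unitality hold because $f_a\ge 0$ whenever $a\ge 0$ and $f_1\equiv 1$, while $m$ is a positive unital functional; and $G$-invariance follows from $f_{h\cdot a}(g) = \omega_0(g^{-1}h\cdot a) = f_a(h^{-1}g)$ together with left-invariance of $m$. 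Thus $\omega$ is a $G$-invariant state on $\sA$, and we conclude as in the first step. (One could also, in the $C^*$ case, let $G$ act affinely on the weak*-compact convex state space $S(\sA)$, check that this action is jointly weak*-continuous using point-norm continuity of the action, and invoke the fixed-point definition of amenability directly.)

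The only point requiring care is precisely why $\omega_0$ must be taken normal in the von Neumann algebra case: for a general state the map $g\mapsto \omega_0(g^{-1}\cdot a)$ need not be continuous, or even measurable, when the $G$-action is merely point-weak* continuous, so there would be nothing to feed into $m$. Apart from this, the argument is entirely routine — the genuine content is all packaged in Lemma~\ref{lem:rigincl+inv->trv}.
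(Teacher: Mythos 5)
Your proof is correct and follows essentially the same route as the paper: reduce to Lemma~\ref{lem:rigincl+inv->trv} via the existence of a $G$-invariant state on $\sA$ when $G$ is amenable. The paper simply invokes this existence directly from the fixed-point definition of amenability it recalls just before the corollary, whereas you spell out the standard averaging construction (correctly noting the normality issue in the von Neumann case); the content is the same.
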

\begin{proof}
Suppose $\inc{\sA}{\sB}$ is a $G$-tight inclusion of objects in $\Op_G$. If $G$ is amenable, then $\sA$ admits a $G$-invariant state, and therefore we have $\sA=\bC$ by Lemma~\ref{lem:rigincl+inv->trv}. This implies the claim.
\end{proof}

Recall that every discrete group $\G$ admits a maximal normal subgroup $N$ such that $\G/N$ is an ICC group (every non-trivial element has infinite conjugacy class). This subgroup is called the hyper-FC-center of $\G$. 
\begin{prop}
Let $\inc{\sA}{\sB}$ be a $\G$-tight inclusion of objects in $\Op_{\G}$. Then the hyper-FC-center of $\G$ is contained in the kernel of the action of $\G$ on $\sA$. 
\end{prop}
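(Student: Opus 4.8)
The plan is to show that if $g$ lies in the hyper-FC-center $N$ of $\G$, then $g$ acts trivially on $\sA$. The key will be to exploit $\G$-tightness against the inclusion $\inc{\sA}{\sB}$ by producing a $\G$-equivariant ucp map $\sA \to \sB$ that differs from the inclusion unless $g$ acts trivially. The natural candidate is the averaging map over the conjugacy class of $g$, but since conjugacy classes in $\G/N$ are only guaranteed finite on $N$ (indeed $N$ is built from the FC-center by transfinite iteration), I would instead work one FC-central element at a time and use an induction along the transfinite construction of $N$.

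\textbf{Reduction to FC-central elements.} First I would recall that $N = \mathrm{hyperFC}(\G)$ is the union of an increasing transfinite chain of normal subgroups $N_0 = \{e\} \subset N_1 \subset \cdots$, where $N_{\alpha+1}/N_\alpha$ is the FC-center of $\G/N_\alpha$ and one takes unions at limits. Since $\sA$ is a $G$-$C^*$-algebra with $G$ acting by $*$-automorphisms, the kernel $K$ of the action $\G \act \sA$ is a normal subgroup, and it suffices to show $N_\alpha \subset K$ for all $\alpha$; the limit and successor-of-limit steps are automatic, so the whole statement reduces to the following claim: \emph{if $H \trianglelefteq \G$ with $H \subset K$, and $g \in \G$ has finite conjugacy class modulo $H$, then $g \in K$.} Passing to the quotient $\G/H$ (which still acts on $\sA$, now with $H$ killed, and for which $\inc{\sA}{\sB}$ is still tight as a $\G/H$-inclusion since $\G/H$-equivariance is weaker), this is exactly the case $H = \{e\}$: \emph{every FC-central element of $\G$ acts trivially on $\sA$.}

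\textbf{The core argument.} So suppose $g \in \G$ has finite conjugacy class $C = \{h g h^{-1} : h \in \G\} = \{g_1, \dots, g_n\}$. Let $\alpha_j \in \mathrm{Aut}(\sA)$ denote the action of $g_j$, and consider the map
\[
\Phi = \frac{1}{n}\sum_{j=1}^{n} \alpha_j \colon \sA \to \sA \subset \sB.
\]
Each $\alpha_j$ is ucp (being a $*$-automorphism) and $\Phi$ is a convex combination of ucp maps, hence ucp. The crucial point is that $\Phi$ is $\G$-equivariant: for $h \in \G$, conjugation by $h$ permutes the set $C$ (since $C$ is a conjugacy class), so $\alpha_h \circ \Phi \circ \alpha_h^{-1} = \frac{1}{n}\sum_j \alpha_{h g_j h^{-1}} = \frac{1}{n}\sum_j \alpha_j = \Phi$, using that the action $\G \to \mathrm{Aut}(\sA)$ is a homomorphism so $\alpha_h \alpha_{g_j} \alpha_h^{-1} = \alpha_{h g_j h^{-1}}$. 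Therefore $\Phi$ is a $\G$-equivariant ucp map $\sA \to \sB$, so by $\G$-tightness $\Phi = \id_{\sA}$, i.e. $\frac{1}{n}\sum_j \alpha_j = \id$ on $\sA$.

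\textbf{From the averaged identity to $\alpha_g = \id$.} It remains to deduce $\alpha_g = \id$ from $\frac{1}{n}\sum_{j=1}^n \alpha_j = \id$, where the $\alpha_j$ are $*$-automorphisms of the unital $C^*$-algebra $\sA$ (one of which is $\alpha_g$). I expect \textbf{this to be the main obstacle}, since a priori a convex average of automorphisms equaling the identity does not force each to be the identity. The way around it: evaluate at a self-adjoint $a \in \sA$ with $\|a\| = 1$ and look at an irreducible GNS-type representation or, more simply, apply a pure state $\omega$ to the identity $a = \frac{1}{n}\sum_j \alpha_j(a)$; using $|\omega(\alpha_j(a))| \le \|a\|$ and an extreme-point/equality argument in the closed unit ball, one gets $\omega(\alpha_j(a)) = \omega(a)$ for a suitable family of states separating enough of $\sA$. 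A cleaner route: since each $\alpha_j$ is a $\ast$-automorphism it is in particular an isometry and a Jordan morphism, and the equation $\id = \frac1n\sum_j \alpha_j$ together with the fact that $\alpha_j(1)=1$ lets one run the standard multiplicative-domain / strict-convexity argument — apply both sides to a unitary $u \in \sA$: then $1 = u^* \cdot \frac1n\sum_j \alpha_j(u)$, and since $\frac1n\sum_j \alpha_j(u)$ is an average of unitaries whose product with $u^*$ (itself unitary) has norm $\le 1$ but the average equals $u$, strict convexity of the set of unitaries inside the ball (or of states thereon) forces $\alpha_j(u) = u$ for all $j$ with $\alpha_j$ appearing; as unitaries span $\sA$ (if $\sA$ is a von Neumann algebra, or after enlarging, and in general one can reduce to this by working inside $\sB$ which one may assume is a von Neumann algebra, or invoke Russo–Dye), we conclude $\alpha_j = \id$ for every $j$, in particular $\alpha_g = \id$. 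Thus $g \in K$, completing the induction and the proof.
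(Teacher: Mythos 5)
Your proposal is correct and follows essentially the same route as the paper: average the action over the finite conjugacy class, check that conjugation permutes the class so the average is $\G$-equivariant, invoke tightness to conclude the average is the identity, deduce that each automorphism is trivial, and finish by transfinite induction up the hyper-FC-series. The only difference is that you spell out the last step (via extreme points of the unit ball and Russo--Dye, or equivalently the Schwarz-inequality equality case), which the paper simply asserts; your worry that this is ``the main obstacle'' is unwarranted, as it is a standard fact, and your argument for it is fine.
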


\begin{proof}
Suppose $g\in \G$ has finite conjugacy class $C_g$. Define $\Phi_g\colon  \sA \to \sA$ by
\[ ~~~~~~~~ \Phi_g(a) = \frac{1}{\#C_g}\sum_{k\in C_g} ka ~~~~~~~~ (a\in \sA) .\]
Then $\Phi_g$ is ucp, and for every $h\in G$ and $a\in\sA$,
\[\begin{split}
h\Phi_g(a) &= \frac{1}{\#C_g}\sum_{k\in C_g} hka = \frac{1}{\#C_g}\sum_{k\in C_g}kha = \Phi_g (ha) ,
\end{split}\]
which shows $\Phi_g$ is equivariant. Thus, by tightness $\Phi_g=\id$. Since $a\mapsto ka$ is a *-isomorphism of $\sA$ for each $k\in C_g$, it follows $ka = a$ for every $k\in C_g$ and $a\in\sA$. In particular, $g$ acts trivially on $\sA$. This implies the normal subgroup $N$ of $G$ consisting of all finite conjugacy elements lie in the kernel of the action $G\act \sA$. Repeating the argument for the action $G/N\act \sA$, and taking a transfinite induction yields the result.
\end{proof}

In particular, we conclude the following for discrete groups with faithful actions on tight inclusions.
\begin{cor}
Let $\inc{\sA}{\sB}$ be a $\G$-tight inclusion of objects in $\Op_{\G}$. If the action of $\G$ on $\sA$ is faithful, then $\G$ is an ICC group. 
\end{cor}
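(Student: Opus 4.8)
The plan is to obtain this immediately from the preceding Proposition. Faithfulness of the action $\G\act\sA$ means precisely that the kernel of this action is the trivial subgroup $\{e\}$. Since $\inc{\sA}{\sB}$ is $\G$-tight, the Proposition asserts that the hyper-FC-center of $\G$ is contained in this kernel, hence the hyper-FC-center is $\{e\}$. By the characterization recalled just before the Proposition — the hyper-FC-center is the normal subgroup $N$ for which $\G/N$ is ICC — we conclude that $\G=\G/\{e\}$ is ICC, which is the claim.

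It is also worth noting that one can give a self-contained argument bypassing the transfinite induction in the Proposition's proof, since being ICC requires only that the \emph{FC-center} of $\G$ (the set of elements with finite conjugacy class) be trivial, not the full hyper-FC-center. Suppose for contradiction that some $g\neq e$ has finite conjugacy class $C_g$, and form the averaging operator $\Phi_g(a)=\frac{1}{\#C_g}\sum_{k\in C_g}ka$ exactly as in the proof of the Proposition; it is a $\G$-equivariant ucp map $\sA\to\sA$, so $\G$-tightness of $\inc{\sA}{\sB}$ (together with Proposition~\ref{prop:tight->suablg->tight}, which gives tightness of $\inc{\sA}{\sA}$) forces $\Phi_g=\id$. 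Using that each $a\mapsto ka$ is a $*$-automorphism and that an average of $*$-automorphisms equal to the identity must have every summand equal to the identity, one gets that $k$ acts trivially on $\sA$ for every $k\in C_g$; in particular $g$ lies in the kernel of $\G\act\sA$, contradicting faithfulness.

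The only sub-point requiring a word is the claim that if $\frac1n\sum_i\alpha_i=\id$ with each $\alpha_i$ a $*$-automorphism of a unital $C^*$-algebra then each $\alpha_i=\id$: for a unitary $u$ one applies an arbitrary state $\phi$ to $u^*\alpha_i(u)$, uses $|\phi(u^*\alpha_i(u))|\leq 1$ to force $\phi(u^*\alpha_i(u))=1$ for all $\phi$ and all $i$, whence $\|1-u^*\alpha_i(u)\|=0$, and then uses that unitaries span. Since this is exactly the step already invoked inside the Proposition, in the write-up I would simply cite the Proposition and take the one-line route above. There is no genuine obstacle here — all the content sits in the Proposition, and this corollary is its specialization to faithful actions.
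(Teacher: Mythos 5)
Your main argument is exactly the paper's intended (unwritten) proof: the Proposition places the hyper-FC-center inside the kernel of $\G\act\sA$, faithfulness makes that kernel trivial, and triviality of the hyper-FC-center means $\G$ itself is ICC. Your self-contained variant, which only needs triviality of the FC-center and so avoids the transfinite induction, is also correct, including the justification that an average of $*$-automorphisms equal to the identity forces each summand to be the identity.
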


It follows from the definition that for any $\sB\in\Op_{G}$, the inclusion $\inc{\bC}{\sB}$ is tight.
\begin{prop}
Let $\sB\in \Op_{G}$. Then $\sB$ admits a maximal subalgebra $\sA$ such that $\inc{\sA}{\sB}$ is tight.
\begin{proof}
This follows from a standard Zorn's Lemma argument combined with the above observation that the inclusion $\inc{\bC}{\sB}$ is $\G$-tight.
\end{proof}
\end{prop}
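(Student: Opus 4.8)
The plan is to apply Zorn's Lemma to the poset of subalgebras $\sA\subseteq\sB$ (objects of $\Op_G$, i.e.\ closed $G$-invariant unital subalgebras) for which $\inc{\sA}{\sB}$ is $G$-tight, ordered by inclusion. This poset is nonempty because $\bC\subseteq\sB$ is always $G$-tight: any $G$-equivariant ucp map $\bC\to\sB$ sends $1$ to $1$, hence equals the inclusion. So the only real work is verifying the chain condition.

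First I would fix a chain $\{\sA_i\}_{i\in I}$ in this poset and set $\sA=\overline{\bigcup_i \sA_i}$ (norm closure, or weak* closure in the $W^*$-case; in either case $\sA$ is again an object of $\Op_G$, being a closed $G$-invariant unital subalgebra of $\sB$). I then need to show $\inc{\sA}{\sB}$ is $G$-tight. Let $\theta\colon\sA\to\sB$ be any $G$-equivariant ucp map. For each $i$, the restriction $\theta|_{\sA_i}$ is a $G$-equivariant ucp map $\sA_i\to\sB$, so by $G$-tightness of $\inc{\sA_i}{\sB}$ we get $\theta|_{\sA_i}=\id_{\sA_i}$, i.e.\ $\theta(a)=a$ for all $a\in\sA_i$. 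Since this holds for every $i$, $\theta$ agrees with the identity on $\bigcup_i\sA_i$, which is dense in $\sA$; as $\theta$ is contractive (ucp maps are contractive) and the inclusion is continuous, $\theta=\id_\sA$ on all of $\sA$. Hence $\inc{\sA}{\sB}$ is $G$-tight, so $\sA$ is an upper bound for the chain inside the poset. Zorn's Lemma then yields a maximal element $\sA$, which is the desired maximal tight subalgebra.

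The one point requiring a little care — and the only place where the $C^*$ versus $W^*$ distinction matters — is the closure step: one must check that taking the norm (resp.\ weak*) closure of the union of a chain stays within $\Op_G$ and that density plus continuity of $\theta$ forces $\theta=\id$ on the closure. In the $W^*$ case $\theta$ need not be normal, so one cannot simply invoke weak*-continuity of $\theta$; instead note that $\theta$ and $\id$ are both bounded and agree on the norm-dense $*$-subalgebra $\bigcup_i\sA_i$ of the $C^*$-algebra generated, hence agree on its norm closure, which is weak*-dense in $\sA$, and then a Kaplansky density / bounded weak*-density argument is not even needed because one can argue at the level of the generating $C^*$-algebra: a $G$-equivariant ucp map out of $\sA$ restricts to one out of this $C^*$-subalgebra, and the union of the $\sA_i$ (as $C^*$-algebras) is already $G$-tight in $\sB$ by the same restriction argument, pinning down $\theta$ on a norm-dense set and hence everywhere by norm-continuity. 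This is routine, so I would not belabor it.
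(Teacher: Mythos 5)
Your proof is correct and is exactly the ``standard Zorn's Lemma argument'' the paper invokes: the poset of tight subalgebras is nonempty because $\inc{\bC}{\sB}$ is tight, and every chain is bounded above by the closure of its union, since any equivariant ucp map restricts to the identity on each member of the chain and is then forced to be the identity on the closure by norm-continuity. Your worry about weak* closures in the $W^*$-case is unnecessary (and your resolution of it slightly garbled), since subalgebras in $\Op_G$ need only be norm-closed $G$-invariant $C^*$-subalgebras --- as in the paper's basic example $\inc{C(X)}{L^\infty(X,\nu)}$ --- so the norm closure of the union suffices.
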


We continue with some basic examples of $G$-tight inclusions. More examples will be studied in later sections.

\begin{example}
For every countable discrete group $\G$, $\bC$ is a maximal subalgebra of $\ell^{\infty}(\G)$ which is tight. Indeed, since the right translation by any element of $\G$ is a ucp equivariant map on $\ell^{\infty}(\G)$, any function in a tight subalgebra of $\ell^{\infty}(\G)$ must be invariant under right translation by every $g\in\G$, hence constant.\qed
\end{example}

While we are mainly interested in the setup in which one object is a von Neumann algebra and the other is a $C^*$-algebra, the following is an example that is purely $C^*$-algebraic. 
\begin{example}\label{top_bondaries}
If $Y$ is a topological $G$-boundary, and $X$ is a continuous $G$-factor of $Y$, then it follows from \cite[Proposition 4.2]{Furs73} that the inclusion $\inc{C(X)}{C(Y)}$ is $G$-tight.

In particular, $C(X)$ is self-tight for every topological boundary $X$.\qed 
\end{example}

\begin{example}
Let $\cH$ be a Hilbert space, and let $G=\cU(\cH)$ be the group of all unitaries on $\cH$, considered as a (uncountable) discrete group. The space $\cB(\cH)$ of all bounded linear maps on $\cH$ is canonically a $G$-$C^*$-algebra. We show $\cB(\cH)$ is $G$-self-tight. For this, let $\psi$ be a $G$-equivariant ucp map on $\cB(\cH)$, and let $p$ be a projection on $\cH$. Let $H\leq G$ be the group of unitaries on $\cH$ commuting with $p$. Then by equivariance, $\psi(p)$ also commutes with $H$, hence $\psi(p)\in\{p\}''={\rm span}\{p, 1_{\cB(\cH)}-p\}$. Thus, by positivity, $\psi(p)=r_pp+s_pp^\perp$ for some $r_p, s_p\in \bR^+$. 

Now, given two projections $p$ and $q$ such that both their ranges and orthogonal subspaces to their ranges are infinite dimensional, there is $u\in G$ such that $upu^*=q$ and $up^\perp u^*=q^\perp$. Then by equivariance, $\psi(q) = \psi(upu^*)= u\psi(p)u^*= u(r_pp+s_pp^\perp)u^*= r_pq+s_pq^\perp$, therefore $r_p=r_q(=:r)$ and $s_p=s_q(=:s)$. In particular, in this case, since $\psi$ is unital, $1-s_p=r_{p^\perp}=r_p$. Moreover, given such $p$ as above, if we choose subprojections $p_1$ and $p_2$ of $p$ with infinite dimensional ranges such that $p=p_1+p_2$, then we get 
\[\begin{split}
r{p_1}+s{p_1}^\perp+r{p_2}+s{p_2}^\perp =&
\psi(p_1+p_2) = \psi(p) = rp+sp^\perp \\=& 
r{p_1}+r{p_2}+s{(p_1+p_2)}^\perp \\=&
r{p_1}+r{p_2}+s{p_1}^\perp+s{p_2}^\perp-s1_{\cB(\cH)} ,
\end{split}\]
which implies $s=0$, hence $r=1$, and $\psi(p)=p$ for all projections $p$ as above. Now, if $q$ is a finite rank projection, then $q\leq p$ for some projection $p$ as above. Then, since both $p-q$ and $(p-q)^\perp$ have infinite ranks, we get $p=\psi(p) = \psi(p-q)+ \psi(q) = p-q + \psi(q)$, which implies $\psi(q) = q$. Hence, $\psi$ restricts to the identity map on projections, and since the set of projections span a norm dense subspace of $\cB(\cH)$, we conclude $\psi=\id$.\qed
\end{example}
Rigidity properties of locally compact groups are usually passed down to their lattices. This is the case for the tightness condition.
\begin{thm}\label{thm:lattice}
Let $G$ be a lcsc group and let $\G$ be a lattice in $G$. Then any $G$-tight inclusion $\inc{\sA}{\sB}$ of objects in $\Op_G$ is $\G$-tight.
\begin{proof}
Let $\G$ be a lattice in $G$ and suppose $\inc{\sA}{\sB}$ is a $G$-tight inclusion of objects in $\Op_G$. Let $\psi\colon  \sA\to \sB$ be a $\G$-equivariant ucp map. For each $a\in \sA$, the map $g \mapsto g\psi(g^{-1}a)$ is continuous from $G$ to $\sB$ and restricts to the constant function $\gamma\mapsto \psi(a)$ on $\G$. Thus, it induces a continuous function $\rho_a\colon  G/\G\to\sB$, $g\G\mapsto g\psi(g^{-1}a)$. Define the map $\phi\colon \sA\to \sB$, by $\phi(a) := \int_{G/\G} \rho_a(g\G) d\mu(g\G)$, where $\mu$ is a $G$-invariant Borel probability on $G/\G$. Then $\phi$ is ucp, and for every $h\in G$,
\[\begin{split}
h\phi(a) &= \int_{G/\G} h\rho_a(g\G) d\mu(g\G) = \int_{G/\G} hg\psi(g^{-1}a) d\mu(g\G) \\&= \int_{G/\G} g\psi(g^{-1}ha) d\mu(g\G) = \phi(ha) ,
\end{split}\]
which shows $\phi$ is $G$-equivariant. Hence, by $G$-tightness, $\phi$ is the inclusion map. Since the maps $\sA\ni a\mapsto g\psi(g^{-1}a)\in \sB$ are ucp for every $g\in G$, by extremality of the inclusion map in the set of ucp maps, it follows $g\psi(g^{-1}a) = a$ for all $a\in \sA$ and a.e.\ $g\G\in G/\G$. By continuity of the action and the fact that $\mu$ has full support, we conclude the latter equality for all $g\in G$.
Hence, in particular, $\psi = \id_\sA$.
\end{proof}
\end{thm}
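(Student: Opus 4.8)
The plan is to run the classical ``averaging over $G/\Gamma$'' device that transfers rigidity from the ambient group back to a lattice. Let $\psi\colon\sA\to\sB$ be an arbitrary $\Gamma$-equivariant ucp map; I must show $\psi=\id_\sA$. Since $\Gamma$ is a lattice, fix a $G$-invariant Borel probability measure $\mu$ on $G/\Gamma$. For each $a\in\sA$, the map $G\to\sB$, $g\mapsto g\psi(g^{-1}a)$, is continuous (the $G$-action on $\sB$ is continuous and $\psi$ is contractive) and, thanks to $\Gamma$-equivariance of $\psi$, is constant on right $\Gamma$-cosets, since $g\gamma\,\psi\big((g\gamma)^{-1}a\big)=g\cdot\gamma\psi\big(\gamma^{-1}(g^{-1}a)\big)=g\psi(g^{-1}a)$. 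Hence it descends to a continuous map $\rho_a\colon G/\Gamma\to\sB$.

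Now set $\phi(a):=\int_{G/\Gamma}\rho_a(g\Gamma)\,d\mu(g\Gamma)$, where the $\sB$-valued integral is understood weakly (against states if $\sB$ is a $C^*$-algebra, against the predual if $\sB$ is a von Neumann algebra; since $G/\Gamma$ is second countable and $\rho_a$ is bounded and continuous, this is unproblematic and lands in $\sB$). Then $\phi$ is ucp, being an average of the ucp maps $a\mapsto g\psi(g^{-1}a)$, and $G$-invariance of $\mu$ yields $G$-equivariance:
\[
h\phi(a)=\int_{G/\Gamma}hg\,\psi(g^{-1}a)\,d\mu(g\Gamma)=\int_{G/\Gamma}g\,\psi(g^{-1}ha)\,d\mu(g\Gamma)=\phi(ha).
\]
So $\phi$ is a $G$-equivariant ucp map $\sA\to\sB$, and $G$-tightness of $\inc{\sA}{\sB}$ forces $\phi=\id_\sA$.

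It remains to upgrade $\phi=\id_\sA$ to $\psi=\id_\sA$, which I would do via the standard rigidity of $*$-homomorphisms inside the ucp cone. Fix $a=a^*\in\sA$. By Kadison--Schwarz applied to the ucp map $a\mapsto g\psi(g^{-1}a)$ we have $\rho_{a^2}(g\Gamma)\ge\rho_a(g\Gamma)^2$ for every $g\Gamma$, and since $\int\rho_a\,d\mu=\phi(a)=a$ the expansion of $\int\big(\rho_a(g\Gamma)-a\big)^2 d\mu(g\Gamma)\ge 0$ gives $\int\rho_a(g\Gamma)^2\,d\mu\ge a^2$. Combining with $\int\rho_{a^2}\,d\mu=\phi(a^2)=a^2$ yields the chain $a^2=\int\rho_{a^2}\,d\mu\ge\int\rho_a(g\Gamma)^2\,d\mu\ge a^2$, so both inequalities are equalities; in particular $\int\big(\rho_a(g\Gamma)-a\big)^2 d\mu=0$, and positivity of the integrand forces $\rho_a(g\Gamma)=a$ for $\mu$-a.e.\ $g\Gamma$. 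Since $g\mapsto g\psi(g^{-1}a)$ is continuous and $\mu$ has full support, in fact $g\psi(g^{-1}a)=a$ for every $g\in G$; taking $g=e$ gives $\psi(a)=a$, and by linearity $\psi=\id_\sA$.

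The argument has no serious obstacle: the only points requiring a little care are the well-definedness and elementary properties (positivity under integration, passing the Kadison--Schwarz inequality under the integral sign) of the weak $\sB$-valued integral over the possibly non-compact space $G/\Gamma$, and once $\phi=\id$ is in hand everything is the familiar extreme-point computation for $*$-homomorphisms. It is worth noting that no amenability or countability assumption on $G$ is needed; the lattice hypothesis enters only through the existence of the invariant probability measure $\mu$ on $G/\Gamma$.
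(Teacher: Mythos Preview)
Your proof is correct and follows essentially the same approach as the paper: average the conjugated maps $a\mapsto g\psi(g^{-1}a)$ over $G/\Gamma$ to produce a $G$-equivariant ucp map, apply $G$-tightness to get $\phi=\id$, and then conclude each conjugate equals the identity. The only difference is cosmetic: the paper invokes ``extremality of the inclusion map in the set of ucp maps'' to pass from $\phi=\id$ to $g\psi(g^{-1}a)=a$, whereas you unpack this extremality explicitly via the Kadison--Schwarz inequality and the computation $\int(\rho_a-a)^2\,d\mu=0$.
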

The following lemma, which is essentially \cite[Lemma 3.3]{Ham85}, provides a useful technical tool for proving the tightness of inclusions.
\begin{lem}\label{lem:faith-cond-exp-->G-rigid}
Let $\incInt{\sA}{\sB}{\sC}$ be inclusions of objects in $\Op_G$ such that
\begin{enumerate}
\item
the inclusion $\inc{\sA}{\sB}$ is $G$-tight, and 
\item
there is a faithful $G$-equivariant conditional expectation $\bE$ from $\sC$ onto $\sB$;
\end{enumerate}
then the inclusion $\inc{\sA}{\sC}$ is also $G$-tight.
\end{lem}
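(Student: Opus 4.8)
The plan is to take an arbitrary $G$-equivariant ucp map $\psi\colon\sA\to\sC$ and show that it must coincide with the inclusion map; the point is that, using hypothesis (2), the conditional expectation $\bE$ lets one pass between the inclusion $\inc{\sA}{\sC}$ and the already-tight inclusion $\inc{\sA}{\sB}$, after which faithfulness of $\bE$ propagates the conclusion back up to $\sC$.

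First I would observe that $\bE\circ\psi\colon\sA\to\sB$ is a composition of $G$-equivariant ucp maps, hence is itself a $G$-equivariant ucp map from $\sA$ into $\sB$. By hypothesis (1) it must then be the inclusion map, i.e.
\[
\bE(\psi(a))=a\qquad(a\in\sA).
\]
Now fix a self-adjoint $a\in\sA$ and set $c:=\psi(a)$, which is self-adjoint since positive maps preserve adjoints. I would then invoke the Kadison--Schwarz inequality twice. Applied to the ucp map $\psi$ it gives $\psi(a^2)\ge c^2$, and applying the positive map $\bE$ together with $\bE(\psi(a^2))=a^2$ (valid since $a^2\in\sA$) yields $a^2\ge\bE(c^2)$. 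Applied to the conditional expectation $\bE$ (which is ucp) it gives $\bE(c^2)\ge\bE(c)^2=a^2$. Hence $\bE(c^2)=a^2$. Since a conditional expectation onto $\sB$ is an $\sB$-bimodule map and $a\in\sB$, one computes
\[
\bE\big((c-a)^2\big)=\bE(c^2)-a\,\bE(c)-\bE(c)\,a+a^2=a^2-a^2-a^2+a^2=0 .
\]
As $(c-a)^2=(c-a)^*(c-a)\ge 0$ and $\bE$ is faithful, this forces $(c-a)^2=0$, hence $c=a$; that is, $\psi(a)=a$. Every element of $\sA$ is a linear combination of self-adjoint elements of $\sA$, so $\psi$ is the inclusion map, which shows that $\inc{\sA}{\sC}$ is $G$-tight.

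The argument is short once the ingredients are lined up, and I do not expect a genuine obstacle: the only care needed is in correctly invoking two standard structural facts — that ucp maps (and in particular conditional expectations) satisfy the Kadison--Schwarz inequality, and that a conditional expectation is a bimodule map over its range — and in using faithfulness precisely in the form ``$\bE(x^{*}x)=0\Rightarrow x=0$''. One can equivalently phrase the middle step as observing that the equality case of Kadison--Schwarz for $\bE\circ\psi$ places $\sA$ inside the multiplicative domain of $\psi$, but the direct computation above is the most economical route.
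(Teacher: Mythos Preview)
Your proof is correct and follows essentially the same approach as the paper: compose with $\bE$ to reduce to the tight inclusion $\sA\subset\sB$, then expand $\bE\big((\psi(a)-a)^*(\psi(a)-a)\big)$ using the $\sB$-bimodule property and the Schwarz inequality for $\psi$, and conclude by faithfulness. The only cosmetic differences are that the paper works directly with arbitrary $x\in\sA$ rather than reducing to self-adjoints, and does not invoke Kadison--Schwarz for $\bE$ (that step is redundant: the inequality $\bE(c^2)\ge a^2$ already follows from $\bE((c-a)^2)\ge 0$ and the bimodule identity you wrote down).
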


\begin{proof}
If $\psi\colon \sA\to \sC$ is a $G$-equivariant ucp map, then $\bE\circ\psi$ is a $G$-equivariant ucp map from $\sA$ to $\sB$. Hence, by the tightness of $\inc{\sA}{\sB}$, we get that $\bE\circ\psi = \id$.  
Then using the fact that $\bE$ is a $\sB$-bimodule map, and applying the Schwarz inequality for the completely positive map $\psi$, we see for every $x\in \sA$ that $\bE\big((x^*-\psi(x^*))(x-\psi(x))\big) = 0$, which by faithfulness of $\bE$, it implies $\psi(x) =x$.
Thus, the inclusion $\inc{\sA}{\sC}$ is $G$-tight.
\end{proof}

Recall our notation that $\G\ltimes\sB$ denotes either the reduced $C^*$-crossed product, or the von Neumann crossed product for $\sB\in\Op_{\G}$.
\begin{cor}\label{cor:tight->crsprdt-tight}
Let $\G$ be a discrete group and $\inc{\sA}{\sB}$ a $\G$-tight inclusion of objects in $\Op_{\G}$. Then the canonical inclusion $\inc{\sA}{\G\ltimes\sB}$ is also $\G$-tight.
\end{cor}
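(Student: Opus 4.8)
The plan is to deduce this directly from Lemma~\ref{lem:faith-cond-exp-->G-rigid} applied to the chain $\incInt{\sA}{\sB}{\G\ltimes\sB}$, so that the only thing to verify is that the crossed product carries a compatible $\G$-action and admits a faithful $\G$-equivariant conditional expectation onto $\sB$. First I would pin down the $\G$-structure on $\sC:=\G\ltimes\sB$: writing $\pi$ for the canonical unitary representation of $\G$ and $\rho$ for the embedding of $\sB$, the assignment $g\mapsto\Ad_{\pi(g)}$ is an action of $\G$ on $\sC$ by $*$-automorphisms, and since $\G$ is discrete this action is automatically continuous in the relevant topology (point-norm for the reduced $C^*$-crossed product, point-weak$^*$ for the von Neumann crossed product), so $\sC\in\Op_{\G}$. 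The covariance relation $\pi(g)\rho(a)\pi(g)^*=\rho(ga)$ shows that $\Ad_{\pi(g)}$ restricts on $\rho(\sB)$ to the original $\G$-action on $\sB$; hence $\inc{\sB}{\sC}$ is a morphism in $\Op_{\G}$, the inclusion $\inc{\sA}{\sC}$ is the composition of $\inc{\sA}{\sB}$ with $\inc{\sB}{\sC}$, and requirement~(1) of Lemma~\ref{lem:faith-cond-exp-->G-rigid} is precisely the hypothesis of the corollary.

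Next I would invoke the canonical conditional expectation $\bE\colon\sC\to\rho(\sB)\cong\sB$, determined on the dense $*$-subalgebra by $\bE\big(\sum_{g}\rho(a_g)\pi(g)\big)=\rho(a_e)$. This $\bE$ is faithful, and normal in the von Neumann case; this is the textbook property of the canonical conditional expectation on a reduced or von Neumann crossed product (see \cite{BrownOz}). It remains to check $\G$-equivariance. On a monomial one computes $\Ad_{\pi(g)}\big(\rho(a)\pi(h)\big)=\rho(ga)\pi(ghg^{-1})$, so $\bE\big(\Ad_{\pi(g)}(\rho(a)\pi(h))\big)$ equals $\rho(ga)$ if $h=e$ and $0$ otherwise; on the other hand $\Ad_{\pi(g)}\big(\bE(\rho(a)\pi(h))\big)$ equals $\Ad_{\pi(g)}(\rho(a))=\rho(ga)$ if $h=e$ and $0$ otherwise. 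Thus $\bE\circ\Ad_{\pi(g)}=\Ad_{\pi(g)}\circ\bE$ on monomials, and by linearity and norm- (resp.\ weak$^*$-) continuity this extends to all of $\sC$, giving requirement~(2). Lemma~\ref{lem:faith-cond-exp-->G-rigid} then yields that $\inc{\sA}{\sC}=\inc{\sA}{\G\ltimes\sB}$ is $\G$-tight.

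There is no genuine obstacle here: the proof is a routine packaging of standard crossed-product facts into Lemma~\ref{lem:faith-cond-exp-->G-rigid}. The only point meriting a moment's care is the $\G$-equivariance of $\bE$, which reduces to the one-line monomial computation above together with the covariance identity; the faithfulness (and, in the von Neumann case, normality) of $\bE$ is the standard property of the canonical conditional expectation.
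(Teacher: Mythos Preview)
Your proof is correct and follows exactly the same approach as the paper: invoke the canonical faithful $\G$-equivariant conditional expectation $\bE\colon\G\ltimes\sB\to\sB$ and apply Lemma~\ref{lem:faith-cond-exp-->G-rigid} to the chain $\incInt{\sA}{\sB}{\G\ltimes\sB}$. The paper's proof is a terse two-line version of yours; you have simply spelled out the verification of $\G$-equivariance of $\bE$ that the paper takes for granted.
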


\begin{proof}
The canonical conditional expectation $\bE_0 \colon  \G\ltimes\sB \to \sB$ is $\G$-equivariant and faithful. Hence the result follows from Lemma~\ref{lem:faith-cond-exp-->G-rigid}.
\end{proof}


\subsection{(Weak) Zimmer amenability}\label{sec:WZamen}

In this section, we recall the notion of weak Zimmer amenability and examine its combination with tight inclusions. This weaker notion of Zimmer amenability is indeed enough for many applications. Furthermore, it has the advantage that it can be extended to $C^*$-algebra setting as well, at least in the discrete group case.

\begin{defn}
An object $\sB \in \Op_G$ is said to be \emph{weakly Zimmer amenable} if for every $\sA\in\Op_G$ there is a $G$-equivariant ucp map $\sA\to \sB$.
\end{defn}

For any locally compact group $G$, the space $C_b^{lu}(G)$ of left-uniformly continuous functions on $G$ is a weakly Zimmer amenable $G$-$C^*$-algebra. 

Clearly, if $G$ is amenable, then every object $\sB \in \Op_G$ is weakly Zimmer amenable.

If $\G\act (X, \nu)$ is a Zimmer amenable probability measure-class preserving action of a discrete group $\G$, then $L^\infty(X, \nu)$ is a weakly Zimmer amenable $\G$-$C^*$-algebra. 
In particular, if $\mu\in\pr(\G)$, and $(B,\nu)$ is the Furstenberg-Poisson Boundary of $(\G,\mu)$, then $L^{\infty}(B,\nu)$ is weakly Zimmer amenable.

More generally, since our objects are all unital, every $\G$-injective $\sA\in\Op_\G$ is weakly Zimmer amenable; the converse is not true: if $\G$ is non-amenable then one can see that $\cB(\ell^2(\G))$ is not $\G$-injective, but it is obviously weakly Zimmer amenable.

The following is a useful characterization of weakly Zimmer amenable $\G$-$C^*$-algebras in the case of discrete groups $\G$. Recall that by \cite[Theorem 3.11]{KK17}, $C(\partial_F\G)$ is $\G$-injevtive for any discrete group $\G$.
\begin{prop}\label{prop:furst-embed-->weak-zamen}
Let $\G$ be a countable discrete group. A $\G$-$C^*$-algebra $\sB$ is weakly Zimmer amenable iff $C(\partial_F \G)\subseteq \sB$ as a $\G$-invariant operator subsystem.
\begin{proof}
Let $\sB$ be a $\G$-$C^*$-algebra. Suppose $\sB$ is weakly Zimmer amenable. Then there is a $\G$-equivariant ucp map $C(\partial_F \G)\to \sB$. But any such map is isometric (e.g.\, \cite{KK17}), hence an embedding of $C(\partial_F \G)$ into $\sB$ as a $\G$-invariant operator subsystem.

Conversely, suppose $C(\partial_F \G)\subseteq \sB$ as a $\G$-invariant operator subsystem, and let $\sA\in\Op_{\G}$. Then by $\G$-injectivity of $C(\partial_F \G)$, there is a $\G$-equivariant ucp map from $\sA$ to $C(\partial_F \G)$, and so in $\sB$. Hence, $\sB$ is weakly Zimmer amenable.
\end{proof}
\end{prop}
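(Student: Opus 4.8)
The plan is to prove the two directions separately, in both cases exploiting the $\G$-injectivity of $C(\partial_F\G)$ recorded just before the statement, together with the fact that any $\G$-equivariant ucp map out of $C(\partial_F\G)$ is automatically isometric. For the forward direction, assume $\sB$ is weakly Zimmer amenable. Apply the definition with $\sA = C(\partial_F\G)$: there is a $\G$-equivariant ucp map $\Phi\colon C(\partial_F\G)\to\sB$. The key point is that $\Phi$ is isometric; I would justify this by the standard argument that $C(\partial_F\G)$ admits no nontrivial $\G$-equivariant ucp self-maps other than (roughly) the identity on its image — more precisely, since $C(\partial_F\G)$ is $\G$-injective and the inclusion $\bC\subset C(\partial_F\G)$ behaves like a boundary inclusion, any equivariant ucp map into a $\G$-operator system is completely isometric. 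Granting this, $\Phi$ identifies $C(\partial_F\G)$ with a $\G$-invariant operator subsystem of $\sB$, which is exactly the desired conclusion. I would cite \cite{KK17} for the isometry fact, as the excerpt already does in the sketch.

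For the converse, suppose $C(\partial_F\G)\subseteq\sB$ is a $\G$-invariant operator subsystem, and let $\sA\in\Op_\G$ be arbitrary. By $\G$-injectivity of $C(\partial_F\G)$, there is a $\G$-equivariant ucp map $\theta\colon\sA\to C(\partial_F\G)$ (here injectivity is applied to the inclusion $\bC\subset\sA$, extending the unit-preserving map $\bC\to C(\partial_F\G)$ along $\bC\hookrightarrow\sA$; or simply: injective objects receive equivariant ucp maps from every object admitting a $\G$-equivariant ucp map from... — more simply still, $\G$-injectivity means $C(\partial_F\G)$ is injective in $\Op_\G$, so the identity map on $C(\partial_F\G)$ extends along the inclusion $C(\partial_F\G)\hookrightarrow\sA$ whenever... ). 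Let me be careful: what injectivity of $C(\partial_F\G)$ in $\Op_\G$ directly gives is that for any inclusion $\inc{\sA_0}{\sA}$ and any $\G$-equivariant ucp map $\sA_0\to C(\partial_F\G)$, there is an extension $\sA\to C(\partial_F\G)$. Taking $\sA_0=\bC$ and the unital map $\bC\to C(\partial_F\G)$ gives a $\G$-equivariant ucp map $\theta\colon\sA\to C(\partial_F\G)$. Composing $\theta$ with the inclusion $C(\partial_F\G)\hookrightarrow\sB$ yields a $\G$-equivariant ucp map $\sA\to\sB$. Since $\sA$ was arbitrary, $\sB$ is weakly Zimmer amenable.

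The main obstacle — really the only nontrivial input — is the assertion in the forward direction that every $\G$-equivariant ucp map $C(\partial_F\G)\to\sB$ is isometric (indeed completely isometric onto its image, so that one genuinely gets an operator-system embedding and not merely a ucp map). This is where one must invoke the structure theory of the Furstenberg boundary from \cite{KK17}: concretely, if $\Phi$ is such a map, one extends it by $\G$-injectivity to a $\G$-equivariant ucp map $\tilde\Phi\colon C(\partial_F\G)\to C(\partial_F\G)$ (extending along $\sB$ if $C(\partial_F\G)\hookrightarrow\sB$, or using injectivity the other way) whose composition with the original inclusion is the identity — hence $\Phi$ has a left inverse, forcing it to be completely isometric. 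I would present this as a short lemma-free paragraph citing \cite{KK17} rather than reproving it, exactly matching the level of detail the authors signal with their parenthetical "(e.g.\ \cite{KK17})". Everything else is a formal manipulation with the definition of weak Zimmer amenability and the universal property of injective objects, requiring no computation.
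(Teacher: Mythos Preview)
Your proposal is correct and follows essentially the same approach as the paper: forward direction via weak Zimmer amenability applied to $\sA=C(\partial_F\G)$ together with the fact (cited from \cite{KK17}) that any $\G$-equivariant ucp map out of $C(\partial_F\G)$ is isometric, and converse via $\G$-injectivity of $C(\partial_F\G)$ applied to the inclusion $\bC\subset\sA$ followed by composition with the embedding into $\sB$. Your extra discussion of why the isometry holds is more than the paper provides (they simply cite \cite{KK17}), and your sketch there is slightly garbled, but since you correctly plan to cite rather than reprove it, this does not affect the proof.
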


\begin{prop}\label{prop:}
Let $\inc{\sA}{\sB}$ be a $\G$-tight inclusion of objects in $\Op_{\G}$. If $\sB$ is weakly Zimmer amenable, then $\sA=C(X)$ for some topological boundary $X$.
\end{prop}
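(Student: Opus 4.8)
The plan is to realize $\sA$ as a $\G$-equivariant unital $C^*$-subalgebra of $C(\partial_F\G)$. Once this is achieved, $\sA$ is automatically commutative; writing $\sA=C(X)$ with $X=\mathrm{Spec}(\sA)$ and equipping $X$ with the induced $\G$-action by homeomorphisms, the resulting $\G$-equivariant unital embedding $C(X)\hookrightarrow C(\partial_F\G)$ dualizes, via Gelfand duality, to a $\G$-equivariant continuous surjection $\partial_F\G\twoheadrightarrow X$. Since $\partial_F\G$ is a topological $\G$-boundary and a $\G$-factor of a topological boundary is again a topological boundary (see \cite{Furs73}; cf.\ Example~\ref{top_bondaries}), it follows that $X$ is a topological boundary, which gives the claim. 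So everything reduces to producing this $C^*$-embedding.

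To do so, I would first invoke weak Zimmer amenability of $\sB$: by Proposition~\ref{prop:furst-embed-->weak-zamen} there is an embedding $\kappa\colon C(\partial_F\G)\hookrightarrow\sB$ of $C(\partial_F\G)$ into $\sB$ as a $\G$-invariant operator subsystem; put $\cS:=\kappa\big(C(\partial_F\G)\big)\subseteq\sB$. Since $C(\partial_F\G)$ is $\G$-injective \cite{KK17}, the $\G$-ucp map $\kappa^{-1}\colon\cS\to C(\partial_F\G)$ extends to a $\G$-ucp map $q\colon\sB\to C(\partial_F\G)$ with $q\circ\kappa=\id$, and then $\bE:=\kappa\circ q\colon\sB\to\sB$ is a $\G$-ucp idempotent with range $\cS$ that fixes $\cS$ pointwise.

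Here is where tightness enters: $\bE|_\sA$ is a $\G$-equivariant ucp map from $\sA$ into $\sB$, so $\G$-tightness of $\inc{\sA}{\sB}$ forces $\bE(a)=a$ for all $a\in\sA$; in particular $\sA\subseteq\cS$. It remains to upgrade this operator-system containment to a $C^*$-subalgebra containment. I would check by a two-sided Schwarz estimate that $\cS$ lies in the multiplicative domain of $q$: for $c\in C(\partial_F\G)$ one has $q\big(\kappa(c)^*\kappa(c)\big)\le q\big(\kappa(c^*c)\big)=c^*c\le q\big(\kappa(c)^*\kappa(c)\big)$, using Schwarz for $\kappa$ together with positivity of $q$ for the first inequality and $q\circ\kappa=\id$, and Schwarz for $q$ for the last, so that equality holds throughout. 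Consequently the Choi--Effros product $s\circ t:=\bE(st)$ equips $\cS$ with a $C^*$-structure for which $\kappa$ is a $\G$-equivariant $*$-isomorphism $C(\partial_F\G)\cong(\cS,\circ)$ (alternatively, one may invoke uniqueness of the $C^*$-structure on an injective operator system; cf.\ \cite{Ham85}). Finally, since $\sA$ is a $C^*$-subalgebra of $\sB$ with $\sA\subseteq\cS$ and $\bE|_\cS=\id$, for $a,b\in\sA$ we have $a\circ b=\bE(ab)=ab$; thus the $\circ$-product restricts on $\sA$ to the original product, and $\sA$ is a $\G$-invariant unital $C^*$-subalgebra of $(\cS,\circ)\cong C(\partial_F\G)$, as desired.

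The main obstacle I anticipate is precisely this last passage — going from the operator-system inclusion $\sA\subseteq\cS$ handed to us by tightness to an honest inclusion of $C^*$-algebras $\sA\subseteq C(\partial_F\G)$ — which is delicate because $C(\partial_F\G)$ sits inside $\sB$ only as an operator subsystem, not a subalgebra, so the two multiplications must be reconciled through the Choi--Effros product. The remaining ingredients (the injectivity extension producing $\bE$, the single application of tightness, and the closing Gelfand/boundary argument) are routine.
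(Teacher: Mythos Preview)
Your proposal is correct and follows essentially the same route as the paper's proof: embed $C(\partial_F\G)$ into $\sB$ via weak Zimmer amenability, produce a $\G$-equivariant ucp idempotent onto its image using $\G$-injectivity, apply tightness to force $\sA$ into that image, and then reconcile the multiplications via the Choi--Effros product to realize $\sA$ as a genuine $C^*$-subalgebra of $C(\partial_F\G)$. Your write-up is simply more explicit than the paper's at the delicate last step (the paper dispatches it in two lines by invoking that the multiplication on the injective $C^*$-algebra $C(\partial_F\G)$ agrees with the Choi--Effros product induced by the idempotent), and you also spell out the closing Gelfand/boundary argument that the paper leaves implicit.
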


\begin{proof}
Since $\sB$ is weakly Zimmer amenable, we have $C(\partial_F \G)\subseteq \sB$ as a $\G$-invariant operator subsystem by Proposition \ref{prop:furst-embed-->weak-zamen}. By $\G$-injectivity, there is a $\G$-equivariant ucp idempotent $\psi\colon  \sB\to C(\partial_F \G)$. By tightness of the inclusion $\inc{\sA}{\sB}$ the restriction of $\psi$ to $\sA$ is the identity map, which implies $\sA\subseteq C(\partial_F \G)$ (as a $\G$-operator subsystem). 

Since $C(\partial_F \G)$ is an injective $C^*$-algebra, its multiplication coincides with the Choi-Effros product associated to $\psi$. Since $\sA$ is a subalgebra of $\sB$ it follows that this product agrees with the original product on $\sA$. It follows that $\sA$ is indeed a subalgebra of $C(\partial_F \G)$, and so  of the form $\sA=C(X)$ for some topological $\G$-boundary $X$.
\end{proof}


\section{Tight inclusions in commutative setting: tight measure-classes}\label{sec:commutative}

In this section, we focus our attention on a special case of tight inclusions in the commutative setting. More precisely:

\begin{defn}\label{def:rigid-measure}
Let $X$ be a compact $G$-space. A non-singular probability measure $\nu\in\pr(X)$ is said to be \emph{$G$-tight} (or just tight if the group $G$ is clear from the context) if it has full support and the canonical embedding $\inc{C(X)}{L^\infty(X, \nu)}$ is a $G$-tight inclusion.
\end{defn}

In~\cite{Furm08}, Furman introduced and studied the notion of {\it alignment property}: given a measurable $G$-space $(X, \nu)$ and a compact $G$-space $Z$, a Borel measurable $G$-equivariant map $\pi\colon X\to Z$ is said to have the alignment property if the only Borel measurable $G$-equivariant map from $(X,\nu)$ to $\pr(Z)$ is the one given by $x\mapsto \delta_{\pi(x)}$.

The correspondence between maps $X\to \pr(Z)$ and ucp maps $C(Z)\to L^\infty(X,\nu)$ (Lemma~\ref{lem:quasi-factor}) implies that for $X$ and $\nu\in\pr(X)$ as in Definition~\ref{def:rigid-measure}, $\nu$ is $G$-tight iff the identity map $\id\colon  (X,\nu)\to X$ has the alignment property in the sense of Furman. 

\begin{remark}
Observe that tightness is, in fact, a property of the measure-class rather than a single measure. Furthermore, it can be considered as a property of the algebra $L^\infty(X,\nu)$: the existence of an $L^1$-dense, $L^\infty$-closed subalgebra of $L^\infty(X,\nu)$ with unique equivariant ucp map into $L^\infty(X,\nu)$. 
\end{remark}

Let us restate in the case of tight measure-classes the facts proven in Section~\ref{sec:tight-incl} for general tight inclusions.

\begin{prop}\label{prop:rig-meau->gen-props}
Let $G$ be a lcsc group.
\begin{enumerate}
\item
If a compact $G$-space $X$ admits both a tight probability measure and an invariant probability measure, then $X$ is a singleton.
In particular, for any lcsc group $G$ the only finite $G$-space that admits a tight measure-class is the trivial one.
\item
If $G$ admits a non-trivial action with a tight measure-class, then $G$ is non-amenable.
\item
If $\G$ is discrete and admits a faithful action on a compact space $X$ supporting a tight measure-class, then $\G$ is ICC.
\item
If $\G$ is a lattice in $G$, then any $G$-tight measure-class on any compact $G$-space is $\G$-tight.
\end{enumerate}
\end{prop}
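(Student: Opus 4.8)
The plan is to obtain each of the four items by transcribing, via the dictionary $\sA = C(X)$ and $\sB = L^\infty(X,\nu)$, the corresponding abstract statement from Section~\ref{sec:tight-incl} for the $G$-tight inclusion $\inc{C(X)}{L^\infty(X,\nu)}$. Under this dictionary, a $G$-invariant probability measure $m$ on $X$ becomes the $G$-invariant state $f\mapsto\int_X f\,dm$ on $C(X)$; the $G$-action on $X$ is faithful precisely when the induced action on $C(X)$ is; and $X$ is a singleton precisely when $C(X)=\bC$.

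For (1), given a tight $\nu$ and an invariant $m$ on $X$, integration against $m$ is a $G$-invariant state on $C(X)$, so Lemma~\ref{lem:rigincl+inv->trv} forces $C(X)=\bC$, i.e.\ $X$ is a singleton. The ``in particular'' assertion follows since every finite $G$-space carries a $G$-invariant probability measure --- for instance the normalized counting measure on any $G$-orbit --- so a finite $G$-space with a tight measure-class must be a singleton by what we just showed. For (2), this is exactly the corollary following Lemma~\ref{lem:rigincl+inv->trv}: a non-trivial action $G\act X$ with a tight measure-class yields a $G$-tight inclusion $\inc{C(X)}{L^\infty(X,\nu)}$ with $C(X)\neq\bC$, hence $G$ is non-amenable (equivalently: if $G$ were amenable, averaging would produce a $G$-invariant state on $C(X)$, contradicting Lemma~\ref{lem:rigincl+inv->trv}). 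For (3), faithfulness of $\G\act X$ is equivalent to faithfulness of $\G\act C(X)$, so the statement is immediate from the corollary asserting that a faithful action on the smaller algebra of a $\G$-tight inclusion forces $\G$ to be ICC. For (4), Theorem~\ref{thm:lattice} applied to $\inc{C(X)}{L^\infty(X,\nu)}$ shows this inclusion is $\G$-tight; since $\nu$ having full support is a purely topological condition independent of the acting group, the measure-class is still $\G$-tight in the sense of Definition~\ref{def:rigid-measure}.

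The argument is essentially bookkeeping, so there is no serious obstacle; the only point that is not a verbatim translation is the auxiliary remark in (1) that a finite $G$-space always admits a $G$-invariant probability measure, which one disposes of by taking normalized counting measure on an orbit.
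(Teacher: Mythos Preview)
Your proposal is correct and is exactly the approach the paper takes: the paper does not even write out a proof, but simply introduces the proposition with the sentence ``Let us restate in the case of tight measure-classes the facts proven in Section~\ref{sec:tight-incl} for general tight inclusions,'' leaving the translation via $\sA=C(X)$, $\sB=L^\infty(X,\nu)$ implicit. Your added remark that a finite $G$-space always carries an invariant probability (uniform measure on an orbit) is the only detail not explicitly in the paper, and it is correct.
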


A natural source of tight measure-classes is the following.

\begin{thm}\label{thm:USB->rigid}
Let $G$ be a lcsc group, and let $\mu\in\pr(G)$ be an admissible  probability on $G$. Suppose $X$ is a minimal compact $G$ space that admits a unique $\mu$-stationary probability
$\nu$ such that $(X,\nu)$ is a $\mu$-boundary. Then the measure-class of $\nu$ is $G$-tight.
\end{thm}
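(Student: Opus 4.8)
The plan is to show that the inclusion $\inc{C(X)}{L^\infty(X,\nu)}$ is $G$-tight by exploiting the $\mu$-stationarity of $\nu$ together with the defining property of a $\mu$-boundary: the sample-path measures $\om_1\cdots\om_k\,\nu$ converge almost surely to Dirac measures. Concretely, let $\psi\colon C(X)\to L^\infty(X,\nu)$ be a $G$-equivariant ucp map; we must prove $\psi$ is the canonical embedding. By Lemma~\ref{lem:quasi-factor}, $\psi$ corresponds to a $G$-equivariant measurable map $\beta\colon X\to \pr(X)$, $x\mapsto \beta_x$, and we need to show $\beta_x=\delta_x$ for $\nu$-a.e.\ $x$. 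Equivalently, writing $\hat\nu := \int_X \beta_x\, d\nu(x)\in\pr(X)$, I would first check that $\hat\nu$ is $\mu$-stationary: since $\mu*\nu=\nu$ and $\beta$ is equivariant, $\mu*\hat\nu = \int_X \mu*\beta_x\,d\nu(x)$ and a short computation using $g_*\beta_x=\beta_{gx}$ and Fubini gives $\mu*\hat\nu = \int_X \beta_x\, d(\mu*\nu)(x) = \hat\nu$. By the uniqueness hypothesis, $\hat\nu=\nu$.

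The second step is the martingale/boundary argument. Push the map $\beta$ through the random walk: for a.e.\ path $\om=(\om_k)$ the measures $\nu_k := (\om_1\cdots\om_k)_*\nu$ converge weak$^*$ to a Dirac mass $\delta_{x_\om}$, and I want the analogous statement for $\hat\nu_k := (\om_1\cdots\om_k)_*\hat\nu$. But since $\hat\nu=\nu$ by Step~1, we trivially get $\hat\nu_k = \nu_k \to \delta_{x_\om}$. On the other hand, $\hat\nu_k = (\om_1\cdots\om_k)_*\!\int \beta_x\,d\nu(x) = \int \beta_{(\om_1\cdots\om_k)x}\,d\nu(x) = \int \beta_y\, d\nu_k(y)$, i.e.\ $\hat\nu_k$ is the barycenter of $\beta_*\nu_k$. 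Because $\nu_k\to\delta_{x_\om}$ concentrates, and $\beta$ is a measurable assignment, one expects $\int\beta_y\,d\nu_k(y)$ to concentrate near $\beta_{x_\om}$ in an averaged sense; combined with $\hat\nu_k\to\delta_{x_\om}$ this forces $\beta_{x_\om}=\delta_{x_\om}$ for a.e.\ $\om$. Since the hitting measure of the boundary map $\om\mapsto x_\om$ is $\nu$ (this is exactly the statement that $(X,\nu)$ is a $\mu$-boundary), we conclude $\beta_x=\delta_x$ for $\nu$-a.e.\ $x$, as desired.

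To make the concentration argument rigorous I would instead argue directly at the level of ucp maps, using a standard boundary-theory fact about the Furstenberg--Poisson boundary $(B,\nu_B)$ of $(G,\mu)$: every $\mu$-boundary, in particular $(X,\nu)$, is a $G$-factor of $(B,\nu_B)$, and a $G$-equivariant map $X\to\pr(X)$ lifts to an equivariant map $B\to\pr(X)$, which, by the triviality of the Poisson boundary of $(B,\nu_B)$ with values in a standard Borel space, must be a.s.\ a point mass. More precisely, bounded $\mu$-harmonic functions on $G$ are represented by $L^\infty(B,\nu_B)$, and the composition $C(X)\xrightarrow{\psi} L^\infty(X,\nu)\hookrightarrow L^\infty(B,\nu_B)$ together with the Poisson transform $\cP_\nu\colon C(X)\to L^\infty(G)$ must agree, because both are $G$-equivariant ucp maps and $\cP_\nu$ is the unique such map realizing $C(X)$ inside the harmonic functions (here minimality of $X$ is used so that $\cP_\nu$ is injective, pinning down $\psi$). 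Unwinding this identification gives $\psi=\id$.

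The main obstacle is the concentration step: turning ``$\nu_k\to\delta_{x_\om}$ weak$^*$'' into control on the barycenter $\int\beta_y\,d\nu_k(y)$ of the \emph{pushed-forward} family, since $\beta$ is only measurable, not continuous, so weak$^*$ convergence of $\nu_k$ does not immediately give convergence of $\int\beta_y\,d\nu_k(y)$. I expect this is handled exactly as in the classical proof that $\mu$-proximal (i.e.\ $\mu$-USB) actions have the property that all equivariant maps into $\pr(X)$ are point masses — essentially Furstenberg's argument in \cite[Proposition 4.2 / §4]{Furs73} and its refinements in \cite{Margulis-book-91, Glasner-Weiss-16} — by passing to the boundary $B$ and using that the relevant space of equivariant maps is governed by the (trivial) Poisson boundary with values in $\pr(X)$, a standard Borel space. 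Once that classical input is in place, the stationarity computation in Step~1 and the factorization through $B$ close the argument.
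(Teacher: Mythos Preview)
Your reduction via Lemma~\ref{lem:quasi-factor} to an equivariant measurable map $\beta\colon(X,\nu)\to\pr(X)$ is exactly the paper's first step, and your eventual deferral to the classical fact that for a $\mu$-USB every such map takes values in Dirac measures is precisely what the paper invokes, citing \cite[Corollary~2.10(a)]{Margulis-book-91}. So the overall strategy coincides.

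Two comments on the execution. First, your attempt to prove the Margulis input directly runs into the obstacle you yourself name: weak$^*$ convergence $\nu_k\to\delta_{x_\om}$ gives no control over $\int\beta_y\,d\nu_k(y)$ for a merely measurable $\beta$, and your Poisson-boundary alternative is circular as written---the assertion that ``$\cP_\nu$ is the unique such map realizing $C(X)$ inside the harmonic functions'' is essentially a reformulation of what has to be shown. The paper avoids all of this by simply citing Margulis; your Step~1 computation ($\hat\nu$ is $\mu$-stationary, hence equals $\nu$) is correct and is indeed an ingredient in Margulis' proof, but there is no need to reproduce it here.

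Second, you collapse two distinct steps into one. Once $\beta$ is known to be Dirac-valued, it is a measurable $G$-map $(X,\nu)\to(X,\nu)$, and one still has to argue it is the identity. The paper does this cleanly via Lemma~\ref{lem:normal-self-tight} (normal self-tightness of $\mu$-boundaries): the induced map on $L^\infty(X,\nu)$ is normal, equivariant, unital, positive, hence the identity. Your write-up tries to get $\beta_{x_\om}=\delta_{x_\om}$ in one stroke from the martingale limit, which is where the measurability issue bites; separating the two steps makes the argument short and rigorous.
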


\begin{proof}
By Lemma~\ref{lem:quasi-factor} any $G$-equivariant ucp map from $C(X)$ to $L^\infty(X, \nu)$ corresponds to a measurable equivariant map from $(X, \nu)$ to $\pr(X)$. By \cite[Corollary 2.10(a)]{Margulis-book-91}, any such map is mapped into delta measures, hence a $G$-map on $(X, \nu)$. But the identity is the unique measurable $G$-map on $(X, \nu)$ (see e.g. Lemma~\ref{lem:normal-self-tight} below). Hence, the identity is the unique $G$-equivariant ucp map from $C(X)$ to $L^\infty(X, \nu)$. 
\end{proof}

Recall that topological boundaries yield self-tight algebras of continuous functions (Example~\ref{top_bondaries}). The same statement fails for general measurable boundaries. Namely, $L^\infty(B,\nu)$ is not self-tight and Theorem~\ref{thm:USB->rigid} shows that the tightens holds once restricting to a USB (if there is such). Another form of tightness that holds for measurable boundaries is the following well-known fact.
\begin{lemma}\label{lem:normal-self-tight}
Let $(B,\nu)$ be a $\mu$-boundary (for some $\mu\in\pr(G)$, where $G$ is a lcsc group). Then the only \textit{normal} equivariant unital positive $L^\infty(B,\nu) \to L^\infty(B,\nu)$ is the identity.
\begin{proof}
This follows along the same lines as in the proof of Theorem~\ref{thm:USB->rigid}, using the fact that there is no stationary probability measure other than $\nu$ that is absolutely continuous with respect to $\nu$ (this follows from the ergodicity of $\nu$, see e.g.\ \cite{BadShal06}).
\end{proof}
\end{lemma}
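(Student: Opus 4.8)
The statement to prove is Lemma~\ref{lem:normal-self-tight}: for a $\mu$-boundary $(B,\nu)$, the identity is the only normal equivariant unital positive map $L^\infty(B,\nu)\to L^\infty(B,\nu)$. The plan is to translate the normality of such a map into a statement about measures and then invoke the uniqueness of $\nu$ among $\nu$-absolutely-continuous stationary measures. Let $\Phi\colon L^\infty(B,\nu)\to L^\infty(B,\nu)$ be normal, unital, positive and $G$-equivariant. First I would note that a normal unital positive map between commutative von Neumann algebras is dual to a measurable map; more precisely, by normality $\Phi$ is the adjoint of a map on the preduals, and $\Phi(\one{E})$ for $E$ a Borel set is a positive contraction, but unitality plus the fact that $\Phi$ preserves suprema of projections forces $\Phi(\one{E})$ to again be an indicator $\one{\theta^{-1}(E)}$ for some $\nu$-measurable $\theta\colon B\to B$ (this is the standard correspondence between normal $*$-homomorphisms / normal unital positive maps of abelian von Neumann algebras and point maps up to null sets). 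So $\Phi = \theta^*$ for a measurable $G$-equivariant $\theta\colon (B,\nu)\to B$, where $G$-equivariance of $\theta$ comes from $G$-equivariance of $\Phi$.

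\textbf{Key steps.} Once $\Phi=\theta^*$ is established, the push-forward measure $\theta_*\nu$ is absolutely continuous with respect to $\nu$ (since for a $\nu$-null set $E$, $\Phi(\one{E})=\one{\theta^{-1}(E)}$ has integral $\nu(\theta^{-1}(E))=0$, i.e. $\theta_*\nu(E)=0$), and it is $\mu$-stationary because $\theta$ is $G$-equivariant and $\nu$ is $\mu$-stationary: $\mu*(\theta_*\nu) = \theta_*(\mu*\nu) = \theta_*\nu$. The crucial input is that $\nu$ is the \emph{only} $\mu$-stationary probability measure on $B$ that is absolutely continuous with respect to $\nu$; this is exactly the fact cited in the intended proof, following from ergodicity of the $\mu$-boundary $(B,\nu)$ (see \cite{BadShal06}). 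Hence $\theta_*\nu=\nu$, so $\theta$ is a measure-preserving $G$-equivariant self-map of the $\mu$-boundary $(B,\nu)$. Finally, a $\mu$-boundary has no nontrivial measure-preserving $G$-equivariant endomorphisms — the standard argument being that $B$ is a quotient of the Poisson boundary and the bounded $\mu$-harmonic function realizing $\theta$ must already be recovered from the boundary values, forcing $\theta=\id$; equivalently one may cite the rigidity of $\mu$-boundaries as in the proof of Theorem~\ref{thm:USB->rigid}. Therefore $\Phi=\theta^*=\id$.

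\textbf{Main obstacle.} The only genuinely delicate point is the first step: promoting a normal unital positive map (not a priori a $*$-homomorphism) to a point map $\theta$. For commutative von Neumann algebras this works because a normal unital positive map that is also multiplicative on projections is automatically a $*$-homomorphism; but one must check multiplicativity, which is not immediate from positivity alone. The clean route is to observe that $\Phi$ being unital forces it to preserve the unit and, being positive and normal, it maps the extreme points of the positive part of the unit ball — which in $L^\infty$ are precisely the projections $\one{E}$ — to extreme points of order intervals, and combined with $\Phi(\one{E})+\Phi(\one{B\setminus E})=1$ this forces $\Phi(\one{E})$ to be a projection; normality then gives $\sigma$-additivity of $E\mapsto\{\Phi(\one{E})=1\}$, yielding $\theta$. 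This is a routine but non-trivial measure-algebra argument, and in the write-up it is exactly what ``this follows along the same lines'' is glossing over; everything downstream is then immediate from the cited uniqueness of the stationary measure and the boundary rigidity already used for Theorem~\ref{thm:USB->rigid}.
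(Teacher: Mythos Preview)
There is a genuine gap in the first step, precisely at the point you yourself flag as the ``only genuinely delicate point''. Your claim that a normal unital positive map on $L^\infty$ must send projections to projections is false, and the extreme-point argument you sketch does not rescue it. A normal unital positive map need not preserve extreme points of the positive unit ball: take $\Phi(f)=\big(\int f\,d\nu\big)\cdot 1$ on $L^\infty([0,1])$, which is normal, unital and positive, yet sends the projection $\one{[0,1/2]}$ to the constant $1/2$. Nothing in ``positive $+$ normal $+$ unital'' forces $\Phi(\one{E})\in\{0,1\}$ pointwise; in particular, such a $\Phi$ need not arise from a point map $\theta$. Note that your argument at this stage uses neither $G$-equivariance nor the $\mu$-boundary hypothesis, which is a warning sign: you are trying to prove something that is simply not true for general normal unital positive maps between commutative von Neumann algebras.

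The paper's route avoids this trap by \emph{not} attempting to show $\Phi$ is a $*$-homomorphism up front. Following the template of Theorem~\ref{thm:USB->rigid}, one interprets $\Phi$ (via a compact model and the quasi-factor correspondence of Lemma~\ref{lem:quasi-factor}) as a $G$-equivariant measurable map $b\mapsto\theta_b\in\pr(B)$; normality of $\Phi$ is exactly what guarantees the barycenter $\int\theta_b\,d\nu(b)$ is a normal state on $L^\infty(B,\nu)$, hence a $\mu$-stationary probability absolutely continuous with respect to $\nu$, hence equal to $\nu$ by the cited uniqueness fact. Only then does the $\mu$-boundary property (convergence $\omega_n\nu\to\delta_{z(\omega)}$ along a.e.\ random-walk path) force $\theta_b=\delta_b$ for $\nu$-a.e.\ $b$, whence $\Phi=\id$. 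In other words, the passage from ``map into $\pr(B)$'' to ``map into Dirac measures'' is where the boundary hypothesis does its work --- it is not a routine measure-algebra fact, and your downstream steps (which are correct) only become available after this.
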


The lack of endomorphisms of boundaries is an important feature in both the measurable and the topological setups. Example~\ref{top_bondaries} and Lemma~\ref{lem:normal-self-tight} provide straightening of these results: not only there are no equivariant maps in the level of the points, but not even equivariant unital positive linear maps in the level of functions.

Theorem~\ref{thm:USB->rigid} already provides a vast class of examples, especially in the case of discrete groups. There is a significant amount of work on realizations of Furstenberg-Poisson Boundaries on concrete topological spaces, where the main tool is the strip criterion of Kaimanovich~\cite{Kaim00}. In many cases,
the topological space is compact, and it is proven 
that the Furstenberg-Poisson measure is the unique stationary measure on the discussed space. These cases include actions of linear groups on flag varieties~\cites{Ledrappier-85, Kaim00, Brofferio-Schapira-11}, hyperbolic groups acting on the Gromov boundary~\cite{Kaim00}, non-elementary subgroups of mapping class groups acting on the Thurston boundary~\cite{KaimMas96}, among others.

\subsection*{Tight measure-classes vs.\ topological boundaries} 
The arguments in \cite{Oza07} imply that Zimmer amenable tight actions of discrete groups are topological boundaries. This was noted in \cite[Theorem 2.3]{BasRadul20}, although the authors state the result under topological amenability, which is a stronger assumption. These arguments do not directly generalize to non-discrete groups, as the existence of the faithful equivariant conditional expectation is a key point in the argument, and such conditional expectation does not exist in general non-discrete cases.

Below, we give a simple alternative  proof of this fact for general locally compact second countable groups.

\begin{prop}[cf. \cite{Oza07} and \cite{BasRadul20}]\label{prop:Oza}
Let $G$ be a lcsc, and suppose $X$ is a compact $G$-space that admits a fully supported $G$-tight probability measure $\nu$. If $G\act (X, \nu)$ is Zimmer-amenable then $G\act X$ is a topological boundary action.
\end{prop}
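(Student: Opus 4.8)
The plan is to show that for every probability measure $\beta \in \pr(X)$ and every point $x \in X$, there is a net $g_i \in G$ with $g_i \beta \to \delta_x$ weak*; this is exactly the definition of a topological boundary action. The two hypotheses to combine are Zimmer amenability of $G \act (X,\nu)$ and $G$-tightness of $\nu$. Zimmer amenability gives, for every compact $G$-space $Y$ and every measurable $G$-map from $(X,\nu)$ into $\pr(Y)$-bundles, a conditional-expectation-type structure; concretely, applied with $Y = X$, it produces a $G$-equivariant ucp map $\Phi\colon C(X)\to L^\infty(X,\nu)\,\overline{\otimes}\,C(X)$ (equivalently a measurable $G$-map $X \to \pr(X)$ in the fibered sense) that splits the second-coordinate inclusion. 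The key point is that Zimmer amenability of the action means $L^\infty(X,\nu)$ is a "relatively injective" object, so one gets an equivariant ucp lift/section wherever one would want a continuous one.

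First I would make this precise: using that $G\act(X,\nu)$ is Zimmer amenable, obtain a $G$-equivariant norm-continuous assignment — realized as a measurable $G$-equivariant map $\Psi\colon X \to \pr(X)$ (via Lemma~\ref{lem:quasi-factor}, this is the same data as a $G$-equivariant ucp map $C(X)\to L^\infty(X,\nu)$) with the crucial extra feature coming from amenability: the barycenter / image of $\Psi$ can be pushed to be supported, for a.e.\ $x$, on the closure of the orbit structure in a controlled way. More usefully, amenability provides that the map $\mathrm{bar}\colon\pr(X)\to \pr(X)$ composed appropriately is equivariant, and then $G$-tightness of $\nu$ forces any $G$-equivariant ucp map $C(X)\to L^\infty(X,\nu)$ to be the canonical inclusion, i.e.\ $\Psi(x) = \delta_x$ for a.e.\ $x$. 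So tightness collapses the amenability-produced object to a point map.

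Next I would exploit this collapse. Fix $\beta\in\pr(X)$. Consider the $G$-equivariant ucp map $T_\beta\colon C(X)\to C_b^{lu}(G)\hookrightarrow L^\infty(G)$ given by the Poisson transform $\cP_\beta$, and combine it with the amenability of the $(X,\nu)$-action to transport measures on $X$ to $L^\infty(X,\nu)$-valued objects; the identification $\Psi(x)=\delta_x$ a.e.\ means that the only equivariant way to "average" a generic measure back into the system is to recover the Dirac picture. Concretely, take a weak* limit point $\theta$ of the net $\{g\beta : g\in G\}$ along an appropriate ultrafilter/averaging against an invariant mean coming from amenability of the action; equivariance plus the fact that the action is Zimmer-amenable shows the associated equivariant map $C(X)\to L^\infty(X,\nu)$ is ucp, hence by tightness equals the inclusion, which forces the orbit closure of $\beta$ in $\pr(X)$ to contain all Dirac measures $\delta_x$, $x\in X$ — this is precisely the boundary property. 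Minimality of $X$ (automatic here, since $\nu$ has full support and $X$ carries a tight measure, or argued directly) ensures every $\delta_x$ is reachable.

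The main obstacle I anticipate is producing the equivariant ucp map out of Zimmer amenability in the right form and knowing that a weak* limit of the orbit $\{g\beta\}$ interacts with it correctly: amenability of $G\act(X,\nu)$ is a statement about equivariant ucp sections over the *measure space* $(X,\nu)$, whereas the boundary conclusion is a *topological* (every point, every measure) statement, so the delicate step is converting the a.e.\ identity $\Psi(x)=\delta_x$ into a genuinely pointwise convergence statement $g_i\beta\to\delta_x$. I expect this is handled by a continuity/density argument: the set of $x$ for which $\delta_x$ lies in $\overline{G\beta}^{w*}$ is closed and $G$-invariant and has full $\nu$-measure, hence (by full support of $\nu$, and minimality) is all of $X$; and then a diagonal argument over a countable dense subalgebra of $C(X)$ upgrades this to the required net. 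The amenability hypothesis is used exactly once, to manufacture the equivariant ucp map that tightness then rigidifies.
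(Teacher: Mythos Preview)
Your proposal circles close to the paper's argument but never quite lands on it, and the detours introduce real gaps.

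The paper's proof is three lines: for any $\eta\in\pr(X)$ consider the Poisson transform $\cP_\eta\colon C(X)\to C_b(G)$; Zimmer amenability of $(X,\nu)$ gives a $G$-equivariant unital positive map $\psi_0\colon C_b(G)\to L^\infty(X,\nu)$; tightness forces $\psi_0\circ\cP_\eta=\id$, so $\cP_\eta$ is isometric for every $\eta$; and Azencott's criterion \cite[Proposition~1.1]{Azen-book} says this is equivalent to $\overline{G\eta}^{w^*}\supseteq X$, i.e.\ the boundary property.

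You do mention $\cP_\beta$ and the idea of composing with an amenability-produced map into $L^\infty(X,\nu)$, so the ingredients are on the page. But two genuine problems remain. First, the opening two paragraphs (producing $\Psi\colon X\to\pr(X)$ and collapsing it to $\Psi(x)=\delta_x$ a.e.) are a red herring: that conclusion is already delivered by the inclusion map itself, uses neither $\beta$ nor amenability in any essential way, and does not feed into the rest of the argument. Second, and more seriously, you never extract the consequence that matters---that $\cP_\beta$ is \emph{isometric}---and you are missing the nontrivial step (Azencott) that converts ``$\cP_\eta$ isometric for all $\eta$'' into ``$\overline{G\eta}\supseteq X$''. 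Your substitute, the claim that $\{x:\delta_x\in\overline{G\beta}\}$ has full $\nu$-measure, is asserted but not proved: tightness of $\psi_0\circ\cP_\beta$ only tells you that for each fixed $f\in C(X)$ the value $f(x)$ lies in the closed range of $g\mapsto\langle g\beta,f\rangle$, which is exactly the isometry statement and does \emph{not} by itself give a single net $g_i$ working for all $f$ simultaneously. That passage from ``isometric Poisson transform'' to ``Diracs in the orbit closure'' is precisely what Azencott's result supplies. Finally, your aside that minimality is ``automatic'' from tightness plus full support is false in general (see Example~\ref{ex:non-ergodic}); minimality here is part of the conclusion, not an input.
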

\begin{proof}
For any $\eta\in\pr(X)$, the Poisson transform $\cP_\eta$ is a $G$-equivariant unital positive map from $C(X)$ into $C_b(G)$. Since $G\act (X, \nu)$ is Zimmer-amenable, there is a $G$-equivariant unital positive map $\psi_0$ from $C_b(G)$ to $L^\infty(X, \nu)$. By tightness, $\psi_0\circ\cP_\eta = \id$, which implies $\cP_\eta$ is isometric for every $\eta\in\pr(X)$. By~\cite[Proposition 1.1]{Azen-book}, it follows that the weak* closure of the $G$-orbit of $\eta$ in $\pr(X)$ contains $X$, hence the action $G\act X$ is minimal and strongly proximal.
\end{proof}

\subsection*{Other examples}
All examples of tight measure-classes presented so far occur in the setup of boundary actions. However, examples of tight measure-classes appear in more general settings. 
Below, we give examples of actions supporting tight measure-classes that are not boundary actions in a topological or measurable sense.

\begin{example}\label{ex:non-ergodic}
Let $G_1$ and $G_2$ be two lcsc groups, and for $i=1, 2$ let $X_i$ be a compact $G_i$-space and $\nu_i\in \pr(X_i)$ an ergodic non-trivial tight measure-class. Consider the action of the product group $G=G_1 \times G_2$ on the disjoint union $X=X_1\dist X_2$ where $G_1$ and $G_2$ act trivially on $X_2$ and $X_1$ respectively. Then the measure $\nu=\frac{1}{2}\nu_1 +\frac{1}{2}\nu_2$ is a $G$-tight measure-class on $X=X_1 \dist X_2$. 
\end{example}

In particular, the above (somewhat superficial) example shows that unlike the case of boundary actions, neither ergodicity of the measure nor the minimality of the topological action follow from the tightness of the measure-class. Furthermore, we conclude another difference of tight measure-classes to the case of boundaries.

\begin{cor}\label{cor:rig-not->factors}
Tightness of measures does not, in general, pass to (measurable or continuous) factors.
\end{cor}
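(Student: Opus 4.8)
The plan is to produce an explicit counterexample by composing the tight action of Example~\ref{ex:non-ergodic} with the obvious projection onto its ``index'' factor. First I would fix a non-amenable discrete group $\Gamma$ --- say a non-elementary free group --- together with an admissible $\mu\in\pr(\Gamma)$ whose Furstenberg--Poisson Boundary admits a uniquely stationary compact model $(X_0,\nu_0)$; such models exist for free groups on their Gromov boundary, and by Theorem~\ref{thm:USB->rigid} the measure-class of $\nu_0$ is then $\Gamma$-tight, while $\nu_0$ is ergodic and non-trivial, being a boundary.

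Next I would feed this into Example~\ref{ex:non-ergodic} with $G_1=G_2=\Gamma$ and $(X_1,\nu_1)=(X_2,\nu_2)=(X_0,\nu_0)$, obtaining the product group $G=\Gamma\times\Gamma$ acting on $X=X_0\dist X_0$ (the first factor acting on the first copy and trivially on the second, and symmetrically for the second factor) and the $G$-tight measure-class $\nu=\tfrac12\nu_1+\tfrac12\nu_2$, which has full support. The factor I would use is the collapsing map $q\colon X\to\{1,2\}$ sending each copy of $X_0$ to a point. Since the two copies are clopen in $X$ and each is $G$-invariant, $q$ is a continuous $G$-equivariant surjection onto the two-point space with trivial $G$-action; thus $\{1,2\}$ is a continuous --- and a fortiori measurable --- $G$-factor of $X$, and a one-line computation gives $q_*\nu=\tfrac12\delta_1+\tfrac12\delta_2$, which is $G$-invariant and has full support.

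To finish, I would invoke Proposition~\ref{prop:rig-meau->gen-props}(1): a compact $G$-space carrying both a tight and an invariant probability measure must be a singleton. Since $\{1,2\}$ is not a singleton while $q_*\nu$ is $G$-invariant, $q_*\nu$ cannot be $G$-tight, so tightness does not descend through $q$; because $q$ is continuous, this one example settles the continuous and the measurable case at once. I do not expect a genuine obstacle here: the only points needing brief verification are that the two-point quotient really is a $G$-factor and that the existence input --- a $\mu$-USB over some non-amenable group --- is available, both of which are immediate from material already in the paper.
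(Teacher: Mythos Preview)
Your argument is correct and follows essentially the same route as the paper: both use the disjoint-union action of Example~\ref{ex:non-ergodic}, pass to the obvious two-point continuous factor, and observe that the pushforward is an invariant probability on a non-singleton, hence not tight (the paper cites Lemma~\ref{lem:rigincl+inv->trv} directly, while you invoke its commutative restatement Proposition~\ref{prop:rig-meau->gen-props}(1)). Your extra care in specifying a concrete $(\Gamma,\mu)$ with a USB model to feed into Example~\ref{ex:non-ergodic} is harmless and, if anything, makes the existence of the input more explicit than the paper does.
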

\begin{proof}
The tight action from Example~\ref{ex:non-ergodic} clearly has a continuous 
factor, namely, the space with 2 points, equipped with the 
uniform measure, which is not tight by Lemma~\ref{lem:rigincl+inv->trv}.
\end{proof}

We continue with more interesting examples of rigid actions which are not boundaries. In particular, we give examples of purely atomic tight measure-classes on  non-minimal, non-strongly proximal spaces.

Following \cite{BekKal20} we say an open subgroup $H$ of $G$ has \emph{the spectral gap property} if $\delta_H$ is the unique $H$-invariant mean on $\ell^\infty(G/H)$.

\begin{thm}\label{thm:specgapprop-equiv}
Let $H$ be an open subgroup of $G$. Then TFAE
\begin{enumerate}
\item 
$H$ has the spectral gap property.
\item
Any $\nu\in \pr(G/H)$ with full support considered as a probability on the Stone-\v{C}ech compactification $X=\beta(G/H)$ is $G$-tight.
\item
The inclusion $\inc{\ell^\infty(G/H)}{\ell^\infty(G/H)}$ is $G$-tight.
\item
The inclusion $\inc{\ell^\infty(G/H)}{\cB(\ell^2(G/H))}$ is $G$-tight.
\end{enumerate}
\begin{proof}
$(1)\Rightarrow(2): $ Note that $C(X) = \ell^\infty(G/H) = L^\infty(X, \nu)$. Thus we need to show that the only $G$-equivariant unital positive map on $\ell^\infty(G/H)$ is the identity map. Let $\psi\colon  \ell^\infty(G/H)\to \ell^\infty(G/H)$ be such a map. Then $\psi^*(\delta_{H})$ is an $H$-invariant mean on $\ell^\infty(G/H)$, hence $\psi^*(\delta_{H}) = \delta_{H}$ by uniqueness. Since $\psi^*$ is $G$-equivariant it follows $\psi^*(\delta_{gH})=\delta_{gH}$ for all $g\in G$. Since $G/H$ is dense in $\beta(G/H)$ and $\psi^*$ is weak* continuous it follows $\psi^* = \id$, and consequently $\psi=\id$.
\\[1ex]
$(2) \Longleftrightarrow (3):$ Note that $(2)$ and $(3)$ are just two formulations of the same thing, by the definition of the Stone-\v{C}ech compactification.
\\[1ex]
$(3)\Rightarrow(1): $ Assume $H$ does not have the spectral gap property, and let $\phi$ be an $H$-invariant mean on $\ell^\infty(G/H)$ different from $\delta_H$. Then the Poisson transform $\cP_\phi\colon \ell^\infty(G/H)\to \ell^\infty(G)$ is indeed mapped into $\ell^\infty(G/H)$, and we observe that $\cP_\phi^*(\delta_H) = \phi$. In particular, $\phi\neq\id$, which shows $H$ does not have the spectral gap property.
\\[1ex]
$(3)\Rightarrow(4): $ The canonical conditional expectation $\cB(\ell^2(G/H)) \to \ell^\infty(G/H)$ is $G$-equivariant and faithful. Hence, it follows from Lemma~\ref{lem:faith-cond-exp-->G-rigid} that the inclusion $\inc{\ell^\infty(G/H)}{\cB(\ell^2(G/H))}$ is $G$-tight.
\\[1ex]
$(4)\Rightarrow(3): $ This follows from Proposition~\ref{prop:tight->suablg->tight}.
\end{proof}
\end{thm}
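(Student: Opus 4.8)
The plan is to prove the chain of equivalences in Theorem~\ref{thm:specgapprop-equiv} by establishing $(1)\Rightarrow(2)$, $(2)\Leftrightarrow(3)$, $(3)\Rightarrow(1)$, $(3)\Rightarrow(4)$, and $(4)\Rightarrow(3)$, which together close the loop. The key observation underlying everything is that since $H$ is \emph{open} in $G$, the quotient $G/H$ is discrete, so $\ell^\infty(G/H) = C(\beta(G/H))$ and also $\ell^\infty(G/H) = L^\infty(G/H,\nu)$ for any fully supported $\nu$. Thus tightness of the inclusion $C(X)\subset L^\infty(X,\nu)$ with $X = \beta(G/H)$ literally becomes the statement that the only $G$-equivariant unital positive (equivalently ucp, since everything is commutative) map $\psi\colon \ell^\infty(G/H)\to\ell^\infty(G/H)$ is the identity. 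That makes $(2)$ and $(3)$ two phrasings of the same assertion, giving $(2)\Leftrightarrow(3)$ for free.

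For $(1)\Rightarrow(2)$, I would dualize: a $G$-equivariant unital positive $\psi$ on $\ell^\infty(G/H)$ has a weak*-continuous adjoint $\psi^*$ on the state space, and $\psi^*(\delta_H)$ is a state on $\ell^\infty(G/H)$ that is invariant under the stabilizer of the coset $H$, namely $H$ itself — so it is an $H$-invariant mean. The spectral gap hypothesis forces $\psi^*(\delta_H) = \delta_H$; then $G$-equivariance propagates this to $\psi^*(\delta_{gH}) = \delta_{gH}$ for every $g$, and since $\{\delta_{gH} : g\in G\}$ is weak* dense in the state space of $\ell^\infty(G/H) = C(\beta(G/H))$ (the points $gH$ being dense in $\beta(G/H)$), weak* continuity of $\psi^*$ gives $\psi^* = \mathrm{id}$, hence $\psi = \mathrm{id}$. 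For the converse $(3)\Rightarrow(1)$, if $H$ fails the spectral gap property, pick an $H$-invariant mean $\phi \neq \delta_H$ on $\ell^\infty(G/H)$ and form the Poisson transform $\cP_\phi\colon \ell^\infty(G/H)\to \ell^\infty(G)$; I would check that because $\phi$ is $H$-invariant, $\cP_\phi(f)$ is right-$H$-invariant, so $\cP_\phi$ actually lands in $\ell^\infty(G/H)$ and is a $G$-equivariant unital positive self-map with $\cP_\phi^*(\delta_H) = \phi \neq \delta_H$, so $\cP_\phi \neq \mathrm{id}$ and $(3)$ fails.

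The operator-algebraic equivalence with $(4)$ is the easy part: $(3)\Rightarrow(4)$ follows from Lemma~\ref{lem:faith-cond-exp-->G-rigid} applied to $\incInt{\ell^\infty(G/H)}{\ell^\infty(G/H)}{\cB(\ell^2(G/H))}$, using that the diagonal conditional expectation $\cB(\ell^2(G/H))\to\ell^\infty(G/H)$ is $G$-equivariant and faithful; and $(4)\Rightarrow(3)$ is immediate from Proposition~\ref{prop:tight->suablg->tight} since $\ell^\infty(G/H)$ sits between $\ell^\infty(G/H)$ and $\cB(\ell^2(G/H))$. I expect the only point requiring genuine care — the ``main obstacle,'' though a mild one — to be the verification in $(3)\Rightarrow(1)$ that the Poisson transform of an $H$-invariant mean really takes values in the right-$H$-invariant functions on $G$: one must unwind $\cP_\phi(f)(g) = \phi(g^{-1}\cdot f)$ (with $G$ acting on $\ell^\infty(G/H)$ by left translation on cosets) and see that replacing $g$ by $gh$ for $h\in H$ permutes the argument in a way absorbed by $H$-invariance of $\phi$. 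Everything else is bookkeeping with weak* density of the cosets in $\beta(G/H)$ and the standard duality between positive unital maps and their adjoints on state spaces.
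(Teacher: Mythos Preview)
Your proposal is correct and follows essentially the same route as the paper's proof: the same chain $(1)\Rightarrow(2)$, $(2)\Leftrightarrow(3)$, $(3)\Rightarrow(1)$, $(3)\Rightarrow(4)$, $(4)\Rightarrow(3)$, the same dualization via $\psi^*(\delta_H)$ for $(1)\Rightarrow(2)$, the same Poisson-transform counterexample for $(3)\Rightarrow(1)$, and the same invocations of Lemma~\ref{lem:faith-cond-exp-->G-rigid} and Proposition~\ref{prop:tight->suablg->tight} for the equivalence with $(4)$. Your explicit check that $\cP_\phi$ lands in right-$H$-invariant functions is a detail the paper leaves implicit, but otherwise the arguments coincide.
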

%
%
Examples of subgroups with the spectral gap property include the following (see \cite{BekKal20} for more on these examples):
\begin{itemize}
\item 
$SL_n(\bZ)\leq SL_{n+1}(\bZ)$ for $n\geq 2$;
\item
$H\leq H*K$ for any non-amenable $H$ and any $K$ with $|K|\geq 3$.
\end{itemize}

Denote by $\Sub^o_{\rm sg}(G)$ the set of open subgroups of $G$ with the spectral gap property.

In \cite[Theorem A]{BekKal20}, a representation rigidity result was proved for subgroups $\Lambda\in \Sub^o_{\rm sg}(\G)$
of a discrete group $\G$; namely, it was shown that if $\Upsilon$ is a self-commensurated subgroup of $\G$ such that the quasi-regular representation $\lambda_{\G/\Upsilon}$ is weakly equivalent to $\lambda_{\G/\Lambda}$, then $\Upsilon$ is conjugate to $\Lambda$.

Using the self-tightness property of subgroup $H\in \Sub^o_{\rm sg}(G)$, we see below that they satisfy a much stronger representation rigidity with themselves.

\begin{defn}
Let $\pi$ and $\sigma$ be continuous unitary representations of a lcsc group $G$. We say $\pi$ is \emph{barely contained} in $\sigma$, denoted $\pi\llcurly\sigma$, if there is a $G$-equivariant ucp map from $\cB(H_\sigma)$ to $\cB(H_\pi)$.
We say $\pi$ and $\sigma$ are barely equivalent, denoted $\pi\overset{\scriptsize\text{b}}{\sim}\sigma$, if $\pi\llcurly\sigma$ and $\sigma\llcurly\pi$.
\end{defn}
We observe that if $\pi$ is weakly contained in $\sigma$, then $\pi$ is also barely contained in $\sigma$, but the converse is very far from being the case. For instance, if $G$ is amenable, then $\pi\overset{\scriptsize\text{b}}{\sim}\sigma$ for any $\pi$ and $\sigma$.

\begin{thm}
Let $H, L \in \Sub^o_{\rm sg}(G)$. Then $\lambda_{G/H}\overset{\scriptsize{\rm b}}{\sim}\lambda_{G/H}$ iff $H$ and $L$ are conjugate in $G$.
\begin{proof}
Let $\tilde\varphi\colon \cB(\ell^2(G/H))\to \cB(\ell^2(G/L))$ be a $G$-equivariant ucp map. Restricting $\tilde\varphi$ to $\ell^\infty(G/H)$ and then compose it with the canonical conditional expectation $\cB(\ell^2(G/L))\to \ell^\infty(G/L)$, we get a $G$-equivariant unital positive map $\varphi\colon \ell^\infty(G/H)\to \ell^\infty(G/L)$. Similarly, we get a $G$-equivariant unital positive map $\psi\colon \ell^\infty(G/L)\to \ell^\infty(G/H)$. 

By Theorem~\ref{thm:specgapprop-equiv}, both $\ell^\infty(G/H)$ and $\ell^\infty(G/L)$ are $G$-self-tight, hence $\psi\circ\varphi=\id_{\ell^\infty(G/H)}$ and $\varphi\circ\psi=\id_{\ell^\infty(G/L)}$. It follows $\varphi$ and $\psi$ are isometric linear isomorphisms, hence von Neumann algebra isomorphisms. Thus, there is a $G$-equivariant bijection $G/H\to G/L$. This implies there exists $g\in G$ such that $L=gHg^{-1}$.
\end{proof}
\end{thm}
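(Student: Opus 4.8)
The plan is to prove the theorem by extracting a $G$-equivariant bijection $G/H \to G/L$ from a bare equivalence and then invoking the standard fact that such a bijection forces $H$ and $L$ to be conjugate; the reverse implication is easy. So first I would dispose of the trivial direction: if $L = gHg^{-1}$ then right translation by $g$ induces a $G$-equivariant homeomorphism $G/H \to G/L$, which pulls back to a $G$-equivariant $*$-isomorphism $\ell^\infty(G/L) \to \ell^\infty(G/H)$ and extends (via conjugation by the corresponding unitary $\ell^2(G/H)\to\ell^2(G/L)$) to a $G$-equivariant isomorphism $\cB(\ell^2(G/H)) \cong \cB(\ell^2(G/L))$, giving ucp maps in both directions and hence $\lambda_{G/H}\overset{\text{b}}{\sim}\lambda_{G/L}$.

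For the forward direction, suppose $\lambda_{G/H}\llcurly\lambda_{G/L}$ and $\lambda_{G/L}\llcurly\lambda_{G/H}$, witnessed by $G$-equivariant ucp maps $\tilde\varphi\colon \cB(\ell^2(G/H))\to\cB(\ell^2(G/L))$ and $\tilde\psi\colon \cB(\ell^2(G/L))\to\cB(\ell^2(G/H))$. The key move is to cut these down to the diagonal subalgebras: compose $\tilde\varphi$ restricted to $\ell^\infty(G/H)$ with the canonical ($G$-equivariant, faithful, unital) conditional expectation $\cB(\ell^2(G/L))\to \ell^\infty(G/L)$ to obtain a $G$-equivariant unital positive map $\varphi\colon \ell^\infty(G/H)\to\ell^\infty(G/L)$, and symmetrically $\psi\colon \ell^\infty(G/L)\to\ell^\infty(G/H)$. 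Now both $H$ and $L$ lie in $\Sub^o_{\rm sg}(G)$, so by Theorem~\ref{thm:specgapprop-equiv} (equivalence of (1) and (3)) both $\ell^\infty(G/H)$ and $\ell^\infty(G/L)$ are $G$-self-tight. Applying self-tightness of $\ell^\infty(G/H)$ to the $G$-equivariant ucp self-map $\psi\circ\varphi$ gives $\psi\circ\varphi = \id_{\ell^\infty(G/H)}$, and likewise $\varphi\circ\psi = \id_{\ell^\infty(G/L)}$.

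From the two identities it follows that $\varphi$ and $\psi$ are mutually inverse unital positive \emph{linear} bijections between the commutative von Neumann algebras $\ell^\infty(G/H)$ and $\ell^\infty(G/L)$. A positive linear isomorphism between commutative $C^*$/von Neumann algebras whose inverse is also positive is automatically a $*$-isomorphism (it preserves the order, hence the self-adjoint part and the lattice structure, hence multiplication of projections, hence — by density of the span of projections and normality — the product); alternatively one notes it is an order-isomorphism of the unit interval of self-adjoint elements. Such a $*$-isomorphism is implemented by a Borel isomorphism of the underlying spaces, and since $\varphi$ is $G$-equivariant this is a $G$-equivariant bijection $G/H \to G/L$ (the sets $G/H$, $G/L$ being the atoms/points). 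Picking $g\in G$ with the image of the coset $eH$ equal to $gL$, equivariance gives $\{h\in G : hH = eH\} = \{h : hgL = gL\}$, i.e. $H = gLg^{-1}$, equivalently $L = g^{-1}Hg$, which is the desired conjugacy.

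The only genuinely delicate point is the passage from ``mutually inverse unital positive linear maps of commutative von Neumann algebras'' to ``$*$-isomorphism, hence $G$-equivariant bijection of the index sets''; everything else is bookkeeping with the conditional expectations and a direct application of Theorem~\ref{thm:specgapprop-equiv}. I expect that step to be handled in one or two sentences, since positivity of both $\varphi$ and $\psi$ makes each an order isomorphism, and order isomorphisms of abelian von Neumann algebras are $*$-isomorphisms; the $G$-action permutes the minimal projections, which correspond exactly to the points of $G/H$ and $G/L$, so equivariance transfers verbatim to the permutation.
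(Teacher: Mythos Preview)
Your proof is correct and follows essentially the same route as the paper: restrict the ucp maps to the diagonal subalgebras via the canonical conditional expectations, invoke self-tightness from Theorem~\ref{thm:specgapprop-equiv} to get mutual inverses, and then pass to a $G$-equivariant bijection $G/H\to G/L$. The only cosmetic difference is that the paper phrases the passage from ``mutually inverse unital positive maps'' to ``$*$-isomorphism'' via isometry (unital positive maps are contractive, so mutual inverses are isometries, and an isometric unital surjection between commutative $C^*$-algebras is a $*$-isomorphism), whereas you go via order isomorphism; both are standard one-line facts. You also spell out the easy converse direction, which the paper leaves implicit.
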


\medskip

Another interesting class of examples of tight measure-classes appear as atomic measures on the orbit of ``parabolic-type points'' as follows. 

\begin{thm}\label{thm:parsubgrp->rigmsre}
Let $\G$ be a countable discrete group and $\Lambda$ a subgroup of $\G$. Assume there exists a minimal compact $\G$-space $X$ containing a $\Lambda$-fixed point $x_0$ such that $\delta_{x_0}$ is the unique $\Lambda$-invariant probability measure on $X$. Then 
\begin{itemize}
\item[(i)]
The measure $\nu := \sum_{n=1}^\infty\frac{1}{2^n}\delta_{g_nx_0}$ is a tight measure on $X$, where $\{g_n\}_{n\in\bN}$ is a complete set of representatives of $\G/\Lambda$.
\item[(ii)]
The inclusion given by the Poisson transform $\cP_{\delta_{x_0}}(C(X)) \subset \ell^\infty(\G/\Lambda)$ is a $\G$-tight inclusion.
\end{itemize}
\end{thm}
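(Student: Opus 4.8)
The plan is to deduce both parts from one observation about $\L$-invariant states, after disposing of the preliminaries that make the two statements meaningful. First I would note that minimality of $X$ forces the closed $\G$-invariant set $\overline{\G x_0}$ to be all of $X$; since $\supp\nu=\overline{\G x_0}$, the measure $\nu$ has full support, so $\inc{C(X)}{L^\infty(X,\nu)}$ is a genuine inclusion and (i) is meaningful. Because $\delta_{x_0}$ is a point mass, $\cP_{\delta_{x_0}}(f)(g\L)=f(gx_0)$ defines a $\G$-equivariant unital $*$-homomorphism $C(X)\to\ell^\infty(\G/\L)$ whose kernel consists of continuous functions vanishing on the dense orbit $\G x_0$, hence is trivial; thus $\cA:=\cP_{\delta_{x_0}}(C(X))$ is a $\G$-invariant $C^*$-subalgebra of $\ell^\infty(\G/\L)$, $\G$-equivariantly $*$-isomorphic to $C(X)$, and (ii) is meaningful. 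Finally I would observe that $\nu$ is purely atomic with set of atoms exactly the orbit $\G x_0=\{g_nx_0:n\in\bN\}$ (so, in particular, $x_0$ is an atom), whence $L^\infty(X,\nu)$ is $\G$-equivariantly $*$-isomorphic to $\ell^\infty(\G x_0)$, with the inclusion $C(X)\hookrightarrow L^\infty(X,\nu)$ becoming restriction of functions to the orbit. This exhibits (i) and (ii) as two faces of the same assertion, and I would treat them in parallel.

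The core step is the following. In each case the target carries a distinguished point fixed by $\L$ — the coset $e\L$ in $\ell^\infty(\G/\L)$ for (ii), and the atom $x_0$ in $L^\infty(X,\nu)$ for (i) — and evaluation at that point is a $\L$-invariant character on the target (for (i) one uses that $\{x_0\}$ has positive $\nu$-mass, so this evaluation is a well-defined normal character). Hence, given any $\G$-equivariant ucp map $\psi$ from $\cA$, resp.\ $C(X)$, into the target, the composite of this evaluation with $\psi$ is a $\L$-invariant state; carrying it across the $\G$-equivariant isomorphism $\cP_{\delta_{x_0}}\colon C(X)\to\cA$ (nothing to do in case (i)), it becomes a $\L$-invariant state on $C(X)$, i.e.\ a $\L$-invariant probability measure on $X$, which the hypothesis forces to equal $\delta_{x_0}$. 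Tracing through the identifications, this says precisely that $\psi$ agrees with the inclusion map at the distinguished point.

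It then remains to spread this agreement over the whole space using that $\psi$ commutes with the $\G$-action: every coset of $\G/\L$ is $g\cdot e\L$ for some $g\in\G$, and every atom of $\nu$ is $g\cdot x_0$; pushing $g$ across $\psi$ converts the value of $\psi(\cdot)$ at such a point into its value at the distinguished point, which we have just matched with the value of the argument there. So $\psi$ agrees with the inclusion at every coset of $\G/\L$, resp.\ at every atom of $\nu$; since an element of $\ell^\infty(\G/\L)$ is determined by its values on cosets and an element of the purely atomic $L^\infty(X,\nu)$ by its values on atoms, we conclude $\psi=\id$, which is (i) and (ii). I do not anticipate a genuine obstacle: once the $\L$-invariant-state observation is isolated, the rest is formal, and the only points I would be careful to pin down are the preliminaries of the first paragraph — full support of $\nu$ from minimality, injectivity of $\cP_{\delta_{x_0}}$ from density of the orbit, and the identification $L^\infty(X,\nu)\cong\ell^\infty(\G x_0)$ — together with the routine verification that precomposition with an equivariant ucp map preserves $\L$-invariance of a state.
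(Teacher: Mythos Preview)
Your proposal is correct and follows essentially the same approach as the paper: identify $L^\infty(X,\nu)$ with bounded functions on the orbit so that (i) and (ii) become one statement, observe that composing an arbitrary $\G$-equivariant ucp map $\psi$ with evaluation at the distinguished $\Lambda$-fixed point yields a $\Lambda$-invariant state on $C(X)$ which the hypothesis forces to be $\delta_{x_0}$, and then propagate via $\G$-equivariance to conclude $\psi$ is the inclusion. Your handling of the preliminaries (full support of $\nu$ from minimality, injectivity of $\cP_{\delta_{x_0}}$, the purely atomic description of $L^\infty(X,\nu)$) is more explicit than the paper's terse reduction, but the core argument is identical.
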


\begin{proof}
Note that $L^\infty(X, \nu)$ is $\G$-equivariantly isomorphic to $\ell^\infty(\G/\Lambda)$, and this isomorphism on $C(X)$ is the Poisson transform $\cP_{\delta_{x_0}}$. So, we only need to prove (ii). For this, we argue similarly as in the proof of Theorem~\ref{thm:specgapprop-equiv}.
Let $\psi\colon  C(X)\to \ell^\infty(\G/\Lambda)$ be a $\G$-equivariant unital positive map. Then $\psi^*(\delta_{\Lambda})$ is an $\Lambda$-invariant state on $C(X)$, hence the point evaluation at $x_0$ by the uniqueness assumption. Since $\psi^*$ is $\G$-equivariant it follows $\psi^*(\delta_{g\Lambda})=\delta_{gx_0}$ for all $g\in \G$. This shows $\psi = \cP_{x_0}$, hence (ii) follows.
\end{proof}

\begin{cor}\label{cor:parsubgrp->tight}
Let $\G\act X$, $\Lambda\leq \G$ and $x_0\in X$ be as in the statement of Theorem~\ref{thm:parsubgrp->rigmsre}. 
Then the inclusion $\inc{\cP_{\delta_{x_0}}(C(X))}{\cB(\ell^2(\G/\Lambda))}$ is $\G$-tight.
\end{cor}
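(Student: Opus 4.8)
The plan is to deduce this directly from Theorem~\ref{thm:parsubgrp->rigmsre}(ii) via the conditional-expectation criterion of Lemma~\ref{lem:faith-cond-exp-->G-rigid}, following the same pattern as the implication $(3)\Rightarrow(4)$ in Theorem~\ref{thm:specgapprop-equiv}. First I would record the relevant chain of inclusions inside $\Op_{\G}$, namely
\[
\cP_{\delta_{x_0}}(C(X)) \;\subset\; \ell^\infty(\G/\Lambda) \;\subset\; \cB(\ell^2(\G/\Lambda)),
\]
where $\ell^\infty(\G/\Lambda)$ is realized as the algebra of diagonal multiplication operators on $\ell^2(\G/\Lambda)$ and $\G$ acts on $\cB(\ell^2(\G/\Lambda))$ by conjugation by the quasi-regular permutation unitaries $\lambda_{\G/\Lambda}$. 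Since these unitaries normalize the diagonal, all three algebras are $\G$-invariant; moreover, by minimality of $X$ the Poisson transform $\cP_{\delta_{x_0}}$ is injective on $C(X)$, so $\cP_{\delta_{x_0}}(C(X))$ is a genuine $\G$-invariant unital $C^*$-subalgebra of $\ell^\infty(\G/\Lambda)$.

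Next I would verify the two hypotheses of Lemma~\ref{lem:faith-cond-exp-->G-rigid} applied with $\sA=\cP_{\delta_{x_0}}(C(X))$, $\sB=\ell^\infty(\G/\Lambda)$, and $\sC=\cB(\ell^2(\G/\Lambda))$. Hypothesis (1), the $\G$-tightness of $\inc{\sA}{\sB}$, is exactly the content of Theorem~\ref{thm:parsubgrp->rigmsre}(ii). Hypothesis (2) is supplied by the canonical conditional expectation $\cB(\ell^2(\G/\Lambda))\to\ell^\infty(\G/\Lambda)$ given by compression of an operator to its diagonal matrix coefficients: this expectation is faithful, and because the $\G$-action on $\cB(\ell^2(\G/\Lambda))$ is implemented by the permutation unitaries $\lambda_{\G/\Lambda}$ (which permute the standard basis and hence intertwine ``take the diagonal''), it is $\G$-equivariant.

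With both hypotheses in hand, Lemma~\ref{lem:faith-cond-exp-->G-rigid} yields at once that $\inc{\sA}{\sC}=\inc{\cP_{\delta_{x_0}}(C(X))}{\cB(\ell^2(\G/\Lambda))}$ is $\G$-tight, which is the assertion. I do not expect any genuine obstacle: the only steps needing a sentence of justification are that $\cP_{\delta_{x_0}}(C(X))$ sits as a $\G$-invariant $C^*$-subalgebra of $\ell^\infty(\G/\Lambda)\subset\cB(\ell^2(\G/\Lambda))$ and that the diagonal conditional expectation is $\G$-equivariant and faithful, both of which are routine. (One could equivalently route the argument through Corollary~\ref{cor:tight->crsprdt-tight}, but the direct invocation of Lemma~\ref{lem:faith-cond-exp-->G-rigid} is the most economical.)
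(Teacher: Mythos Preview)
Your proposal is correct and follows exactly the same route as the paper's own proof: invoke Theorem~\ref{thm:parsubgrp->rigmsre}(ii) for the tightness of $\cP_{\delta_{x_0}}(C(X))\subset\ell^\infty(\G/\Lambda)$, then apply Lemma~\ref{lem:faith-cond-exp-->G-rigid} using the faithful $\G$-equivariant diagonal conditional expectation $\cB(\ell^2(\G/\Lambda))\to\ell^\infty(\G/\Lambda)$. You simply spell out a few routine details (injectivity of $\cP_{\delta_{x_0}}$, why the expectation is equivariant) that the paper leaves implicit.
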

\begin{proof}
The canonical conditional expectation $\cB(\ell^2(\G/\Lambda)) \to \ell^\infty(\G/\Lambda)$ is $\G$-equivariant and faithful. Hence, it follows from Theorem~\ref{thm:parsubgrp->rigmsre} and Lemma~\ref{lem:faith-cond-exp-->G-rigid} that the inclusion $\inc{\cP_{\delta_{x_0}}(C(X))}{\cB(\ell^2(\G/\Lambda))}$ is $\G$-tight.
\end{proof}

\section{Applications: intermediate objects}\label{sec:appl-int-obj}

In this section, we use properties of tight inclusions and tight measure-classes to prove certain rigidity results concerning intermediate operator algebras associated with tight inclusions. 

Let $\inc{\sA}{\sB}$ be an inclusion of objects in $\Op_G$. By an intermediate object (for the inclusion) we mean a $\sD\in \Op_G$ such that $\incInt{\sA}{\sD}{\sB}$  
which is also assumed to be a $G$-von Neumann algebra in case $\sB$ is.

Let $\sB\in\Op_G$, and let $\sA, \sC\subset \Op_G$ be $G$-invariant $C^*$-subalgebras of $\sB$.  
We write $\sB=\sA\vee\sC$ if $\sB$ is the object generated by $\sA$ and $\sC$; this means if $\sB$ is a $G$-$C^*$-algebra then it is the $C^*$-algebra generated by $\sA$ and $\sC$, and if $\sB$ is a $G$-von Neumann algebra then it is the von Neumann algebra generated by $\sA$ and $\sC$.

\begin{defn}
We say the inclusion $\inc{\sC}{\sB}$ is \emph{co-tight} if there is a $G$-tight inclusion $\inc{\sA}{\sB}$ such that $\sB = \sA \vee \sC$.
\end{defn}

\begin{example}
Let $\bF_2\act (Z, m)$ be an ergodic probability measure preserving (pmp) action. Then for any generating $\mu\in \pr(\bF_2)$ and any $\mu$-stationary $\nu\in\pr(\partial\bF_2)$, then using Theorem~\ref{thm:USB->rigid} and Lemma~\ref{lem:faith-cond-exp-->G-rigid}, one can show that the inclusion $L^\infty(Z, m) \subset L^\infty(Z\times \partial\bF_2, m\times \nu)$ is co-tight. 
\end{example}

\begin{example}\label{ex:parsubgrp->co-tight}
If $\Lambda$ is a subgroup of $\G$ with spectral gap property, or if it is a subgroup as in the statement of Theorem~\ref{thm:parsubgrp->rigmsre},
then it follows from Corollary~\ref{cor:parsubgrp->tight} that the inclusion $\inc{L\G}{\G\ltimes \ell^\infty(\G/\Lambda)}$ is co-tight, where $L\G$ denotes the (left) group von Neumann algebra of $\G$, that is, the von Neumann algebra generated by the left regular representation of $\G$.
\end{example}

Recall the notion of \emph{$G$-rigid extension} in the sense of Hamana \cite{Ham85}: if ${\sA}\subset{\sB}$ is an inclusion of $G$-$C^*$-algebras, then $\sB$ is said to be a $G$-rigid extension of $\sA$ if the identity map on $\sB$ is the unique $G$-equivariant ucp extension of the identity map on $\sA$.

\begin{prop}
Let $G$ be a lcsc group, and $\inc{\sC}{\sB}$ a co-tight inclusion of $G$-$C^*$-algebras. Then $\sB$ is a $G$-rigid extension of $\sC$.
\begin{proof}
By assumptions, there exists a $G$-tight inclusion $\inc{\sA}{\sB}$ such that $\sB=\sA\vee\sC$. Assume $\psi\colon \sB\to\sB$ is a $G$-equivariant ucp map such that $\psi|_\sC=\id_\sC$. By tightness, we also have $\psi|_\sA=\id_\sA$. Since $\psi$ is ucp, it follows $\psi|_{\sA\vee\sC}=\id|_{\sA\vee\sC}$, and this implies the claim.
\end{proof}
\end{prop}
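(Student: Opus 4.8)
Let $G$ be a lcsc group, and $\inc{\sC}{\sB}$ a co-tight inclusion of $G$-$C^*$-algebras. Then $\sB$ is a $G$-rigid extension of $\sC$.

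The plan is as follows. By the definition of co-tightness, there is some $G$-tight inclusion $\inc{\sA}{\sB}$ with $\sB = \sA \vee \sC$, i.e.\ $\sB$ is the $C^*$-algebra generated by $\sA$ and $\sC$ together. Now suppose $\psi\colon \sB \to \sB$ is a $G$-equivariant ucp map restricting to the identity on $\sC$; we must show $\psi = \id_\sB$.

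First I would observe that the composition $\sA \hookrightarrow \sB \xrightarrow{\psi} \sB$ is a $G$-equivariant ucp map from $\sA$ to $\sB$, so by $G$-tightness of $\inc{\sA}{\sB}$ it equals the inclusion map; that is, $\psi|_\sA = \id_\sA$. Thus $\psi$ is a $G$-equivariant ucp map on $\sB$ fixing both $\sA$ and $\sC$ pointwise.

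The main step is then a multiplicative-domain argument. Recall that for a ucp map $\psi$, the set $\{b \in \sB : \psi(b^*b) = \psi(b)^*\psi(b) \text{ and } \psi(bb^*) = \psi(b)\psi(b)^*\}$ is a $C^*$-subalgebra (the multiplicative domain), and $\psi$ restricted to it is a $*$-homomorphism; moreover, whenever $b$ lies in this domain, one has the bimodule property $\psi(bx) = \psi(b)\psi(x)$ and $\psi(xb)=\psi(x)\psi(b)$ for all $x\in\sB$. Any element on which $\psi$ agrees with the identity (a $*$-homomorphism on all of $\sB$ composed with $\psi$... more simply: any $b$ with $\psi(b)=b$ and $\psi(b^*b)=b^*b$) lies in the multiplicative domain; in particular every element of $\sA$ and of $\sC$ does, since $\psi$ is the identity there and $\sA,\sC$ are $*$-subalgebras. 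Hence $\psi$ acts as a $*$-homomorphism on the $*$-algebra generated by $\sA \cup \sC$, and it is the identity on the generators, so it is the identity on this $*$-algebra; by norm-continuity of $\psi$ and the fact that $\sA\vee\sC$ is the closure of this $*$-algebra, we conclude $\psi = \id_\sB$. This gives $G$-rigidity of the extension $\sC \subset \sB$.

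The only point requiring minor care — and the place I'd expect the write-up to need a sentence or two — is the passage from ``$\psi|_\sA = \id$ and $\psi|_\sC = \id$'' to ``$\psi|_{\sA\vee\sC}=\id$'': one must invoke that the multiplicative domain is a $C^*$-algebra containing $\sA$ and $\sC$, hence contains the $C^*$-algebra they generate, and that $\psi$ is a $*$-homomorphism there which is forced to be the identity because it fixes a generating set. This is routine given standard facts about ucp maps (e.g.\ Choi's theorem on multiplicative domains), and the argument as written in the proposed proof — ``since $\psi$ is ucp, it follows $\psi|_{\sA\vee\sC}=\id|_{\sA\vee\sC}$'' — is exactly this, compressed. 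No serious obstacle arises.
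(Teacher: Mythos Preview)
Your argument is correct and follows exactly the same route as the paper's proof: obtain $\psi|_\sA=\id_\sA$ from tightness, then use the multiplicative-domain argument to pass from $\psi|_\sA=\id$ and $\psi|_\sC=\id$ to $\psi|_{\sA\vee\sC}=\id$. The paper compresses this last step into the single phrase ``since $\psi$ is ucp,'' which you have correctly unpacked.
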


The following two simple observations show how the notion of (co-)tightness will be used in this section to prove certain applications, namely in proving the lack of (weakly) Zimmer amenable intermediate objects in several setups.

\begin{lem}\label{lem:cotight->noRelZimInt}
Let $G$ be a lcsc group, and let $\inc{\sC}{\sB}$ be a co-tight inclusion of objects in $\Op_G$. If there is a $G$-equivariant ucp map from $\sB$ to $\sC$, then $\sC=\sB$.
\begin{proof}
Assume $\psi\colon \sB\to\sC$ is a $G$-equivariant ucp map. Since $\inc{\sC}{\sB}$ is co-tight, there is a $G$-tight inclusion $\inc{\sA}{\sB}$ such that $\sB$ is generated by $\sA$ and $\sC$. By tightness of $\inc{\sA}{\sB}$, the map $\psi$ restricts to identity on $\sA$, thus $\sA\subset \sC$. Again, using co-tightness, $\sB=\sA\vee\sC\subset\sC$.
\end{proof}
\end{lem}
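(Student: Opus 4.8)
The plan is to use only the $G$-tightness half of the co-tightness hypothesis, together with the trivial remark that a ucp map landing inside $\sC$ is in particular a ucp map into the ambient algebra $\sB$. Concretely, suppose $\psi\colon \sB\to\sC$ is a $G$-equivariant ucp map, and, invoking co-tightness, fix a $G$-tight inclusion $\inc{\sA}{\sB}$ with $\sB=\sA\vee\sC$.

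First I would restrict $\psi$ to $\sA$. Post-composing with the inclusion $\inc{\sC}{\sB}$, the map $\psi|_{\sA}$ becomes a $G$-equivariant ucp map from $\sA$ into $\sB$. Since $\inc{\sA}{\sB}$ is $G$-tight, the only such map is the inclusion, so $\psi(a)=a$ for every $a\in\sA$.

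Next I would read off the consequence. Because $\psi$ takes values in $\sC$, the identity $\psi(a)=a$ forces $\sA\subseteq\sC$. Then, since $\sB=\sA\vee\sC$ and now $\sA\subseteq\sC$, the object generated by $\sA$ and $\sC$ is just $\sC$ — here one uses that $\sC$ is itself a closed $C^*$- (resp.\ von Neumann) subalgebra of $\sB$, not merely an operator subsystem — so $\sB\subseteq\sC$, i.e.\ $\sB=\sC$.

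I do not expect a genuine obstacle: the whole content is the observation that $\psi$ may be regarded as a map into $\sB$, so that tightness of $\inc{\sA}{\sB}$ pins it down on $\sA$. The only point requiring a moment's care is the final identification $\sA\vee\sC=\sC$ once $\sA\subseteq\sC$, which is immediate from the definition of $\sA\vee\sC$ recalled just before the lemma.
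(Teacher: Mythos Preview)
Your proposal is correct and follows essentially the same argument as the paper's proof: restrict $\psi$ to $\sA$, invoke $G$-tightness of $\inc{\sA}{\sB}$ to conclude $\psi|_\sA=\id_\sA$, deduce $\sA\subseteq\sC$, and hence $\sB=\sA\vee\sC=\sC$. The only difference is that you spell out the post-composition with the inclusion $\inc{\sC}{\sB}$ and the reason $\sA\vee\sC=\sC$ when $\sA\subseteq\sC$, which the paper leaves implicit.
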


\begin{lem}\label{lem:cotight->noWZim}
Let $G$ be a lcsc group. A co-tight inclusion of objects in $\Op_G$ admits no weakly Zimmer-amenable proper intermediate objects.
\begin{proof}
This follows from Lemma~\ref{lem:cotight->noRelZimInt} and the obvious fact that any intermediate object for a co-tight inclusion is also co-tight.
\end{proof}
\end{lem}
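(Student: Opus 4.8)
The plan is to deduce the statement directly from Lemma~\ref{lem:cotight->noRelZimInt}, via a single structural observation. Let $\inc{\sC}{\sB}$ be a co-tight inclusion of objects in $\Op_G$, and suppose $\sD\in\Op_G$ is an intermediate object, so that $\incInt{\sC}{\sD}{\sB}$, which is further assumed to be a $G$-von Neumann algebra when $\sB$ is. Assume $\sD$ is weakly Zimmer amenable; the goal is to show $\sD=\sB$, i.e.\ that $\sD$ is not a \emph{proper} intermediate object.

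The first step is to record that the inclusion $\inc{\sD}{\sB}$ is again co-tight. By definition of co-tightness of $\inc{\sC}{\sB}$, there is a $G$-tight inclusion $\inc{\sA}{\sB}$ with $\sB=\sA\vee\sC$. Since $\sC\subset\sD$, we have the chain $\sB=\sA\vee\sC\subseteq\sA\vee\sD\subseteq\sB$, hence $\sB=\sA\vee\sD$; as $\inc{\sA}{\sB}$ is $G$-tight, this exhibits $\inc{\sD}{\sB}$ as co-tight (with the same auxiliary tight subalgebra $\sA$). The second step invokes weak Zimmer amenability of $\sD$: applied to $\sB\in\Op_G$, it yields a $G$-equivariant ucp map $\psi\colon\sB\to\sD$. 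Now apply Lemma~\ref{lem:cotight->noRelZimInt} to the co-tight inclusion $\inc{\sD}{\sB}$ together with the map $\psi\colon\sB\to\sD$; the conclusion is $\sD=\sB$. Since $\sD$ was an arbitrary weakly Zimmer-amenable intermediate object, this proves the claim.

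I do not expect a genuine obstacle here; the argument is a short consequence of the two preceding lemmas. The only point meriting care is bookkeeping: one must apply co-tightness in the correct direction — it is the inclusion $\inc{\sD}{\sB}$ of the intermediate object \emph{into} $\sB$ that inherits co-tightness (reusing $\sA$), not the inclusion $\inc{\sC}{\sD}$ — and weak Zimmer amenability must be used to manufacture a ucp map \emph{landing in} $\sD$ (namely $\sB\to\sD$), which is exactly the input Lemma~\ref{lem:cotight->noRelZimInt} requires.
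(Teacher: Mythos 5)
Your proof is correct and is exactly the paper's argument: the paper's one-line proof invokes Lemma~\ref{lem:cotight->noRelZimInt} together with the observation that the inclusion of any intermediate object into $\sB$ is again co-tight (via the same auxiliary tight subalgebra $\sA$), which is precisely what you spell out. The only addition you make is to verify explicitly that $\sB=\sA\vee\sD$ and that weak Zimmer amenability supplies the required ucp map $\sB\to\sD$ — both of which the paper leaves implicit.
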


Below we use standard terminology from ergodic theory of joinings and relatively measure-preserving extensions. In particular, we use the terminology introduced and used in \cites{FursGlas10}.
We recall that a $G$-space $(\tilde{Y},\tilde \eta)$ is a \emph{joining} of the $G$-spaces $(Y,\eta)$ and $(X,\nu)$ if both $L^\infty(Y,\eta)$ and $L^\infty(X,\nu)$ are embedded $G$-equivarently in $L^\infty(\tilde Y,\tilde \eta)$ and the latter is the von Neumann algebra generated by these two sub-algebras.

A factor map $\phi\colon (\tilde{Y},\tilde{\eta})\to (X,\nu)$ is said to be \emph{relatively measure-preserving} if the canonical conditional expectation is $G$-equivariant. The more common definition in ergodic theory usually uses the adjoint setup and requires equivariance of the disintegration map.

\begin{prop}\label{prop:Zamen-int-fact}
Let $G$ be a lcsc group, $X$ a compact $G$-space, and let $\nu$ be a $G$-tight measure on $X$. Assume that $(\tilde Y, \tilde \eta)$ is a relatively measure-preserving extension of $(X, \nu)$, and $\phi\colon(\tilde Y, \tilde \eta)\to(Y, \eta)$ is a $G$-factor map such that $(\tilde{Y},\tilde \eta)$ is a joining of $(Y,\eta)$ and $(X,\nu)$. If $(\tilde Y, \tilde \eta) \xrightarrow{\varphi} (Z,\om) \xrightarrow{\psi} (Y, \eta)$ are $G$-factor maps such that $\psi\circ\varphi=\phi$ and $(Z,\om)$ is weakly Zimmer amenable then $(\tilde Y, \tilde \eta) \overset{\varphi}{\cong} (Z,\om)$.
\begin{proof}
First, note that the maps $\phi$, $\varphi$, and $\psi$ yield canonical embeddings of corresponding $L^\infty$-algebras, which we identify with their images under these embeddings.

Since $(\tilde Y, \tilde \eta)$ is a relatively measure-preserving extension of $(X, \nu)$, there is a $G$-equivariant faithful conditional expectation $L^\infty(\tilde Y, \tilde \eta) \to L^\infty(X, \nu)$. Hence, by Lemma~\ref{lem:faith-cond-exp-->G-rigid}, the inclusion $\inc{C(X)}{L^\infty(\tilde Y, \tilde \eta)}$ is $G$-tight. 

Since $\tilde \eta$ is a joining of $\nu$ and $\eta$, we have $L^\infty(\tilde Y, \tilde \eta) = L^\infty(X, \nu)\vee L^\infty(Y, \eta)= C(X)\vee L^\infty(Y, \eta)$, therefore the inclusion $\inc{L^\infty(Y, \eta)}{L^\infty(\tilde Y, \tilde \eta)}$ is co-tight. Thus, if $L^\infty(Z,\om)$ is weakly Zimmer amenable, it follows by Lemma~\ref{lem:cotight->noWZim} that $L^\infty(\tilde Y, \tilde \eta) = L^\infty(Z,\om)$, and this completes the proof.
\end{proof}
\end{prop}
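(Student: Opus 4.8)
The plan is to reduce everything to the abstract tightness and co-tightness machinery already developed, by translating the ergodic-theoretic hypotheses into statements about inclusions of operator algebras. First I would fix notation: the factor maps $\phi$, $\varphi$, $\psi$ induce $G$-equivariant normal embeddings
\[
L^\infty(Y,\eta)\hookrightarrow L^\infty(Z,\om)\hookrightarrow L^\infty(\tilde Y,\tilde\eta),
\qquad
L^\infty(X,\nu)\hookrightarrow L^\infty(\tilde Y,\tilde\eta),
\]
and I identify each algebra with its image. The point is that the top algebra $L^\infty(\tilde Y,\tilde\eta)$ contains both $C(X)$ (via the tight measure $\nu$, recalling that $\nu$ being $G$-tight means exactly that $\inc{C(X)}{L^\infty(X,\nu)}$ is $G$-tight) and $L^\infty(Y,\eta)$, and the joining hypothesis says these two generate it as a von Neumann algebra.

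The first substantive step is to upgrade tightness from $L^\infty(X,\nu)$ to $L^\infty(\tilde Y,\tilde\eta)$. Since $\phi\colon(\tilde Y,\tilde\eta)\to(X,\nu)$ is relatively measure-preserving, there is a $G$-equivariant faithful conditional expectation $\bE\colon L^\infty(\tilde Y,\tilde\eta)\to L^\infty(X,\nu)$. Applying Lemma~\ref{lem:faith-cond-exp-->G-rigid} with $\sA=C(X)$, $\sB=L^\infty(X,\nu)$, $\sC=L^\infty(\tilde Y,\tilde\eta)$ — using that $\inc{C(X)}{L^\infty(X,\nu)}$ is $G$-tight by hypothesis — gives that $\inc{C(X)}{L^\infty(\tilde Y,\tilde\eta)}$ is $G$-tight.

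The second step is to invoke the joining hypothesis: $L^\infty(\tilde Y,\tilde\eta)$ is generated by $L^\infty(X,\nu)$ and $L^\infty(Y,\eta)$, and since $L^\infty(X,\nu)$ is itself generated (as a von Neumann algebra) by the weak* dense subalgebra $C(X)$, we get $L^\infty(\tilde Y,\tilde\eta) = C(X)\vee L^\infty(Y,\eta)$. Combined with the $G$-tightness of $\inc{C(X)}{L^\infty(\tilde Y,\tilde\eta)}$ from the previous step, this is precisely the definition of $\inc{L^\infty(Y,\eta)}{L^\infty(\tilde Y,\tilde\eta)}$ being co-tight. Now $L^\infty(Z,\om)$ is an intermediate object of this co-tight inclusion, and it is weakly Zimmer amenable by hypothesis; Lemma~\ref{lem:cotight->noWZim} then forces $L^\infty(Z,\om) = L^\infty(\tilde Y,\tilde\eta)$, which is exactly the claimed isomorphism $(\tilde Y,\tilde\eta)\overset{\varphi}{\cong}(Z,\om)$.

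The only delicate point — really a bookkeeping issue rather than a genuine obstacle — is checking that the hypotheses line up exactly with what the abstract lemmas require: that the various embeddings are genuinely $G$-equivariant $C^*$- (indeed von Neumann algebra) embeddings into the same ambient algebra, that ``relatively measure-preserving'' is taken in the sense giving a $G$-equivariant \emph{faithful} conditional expectation (as the paper defines it just before the statement), and that weak Zimmer amenability of the $G$-space $(Z,\om)$ is being used in the sense of weak Zimmer amenability of the $G$-$C^*$-algebra $L^\infty(Z,\om)$. Once these identifications are made, the proof is a direct chain: relatively-measure-preserving $\Rightarrow$ faithful equivariant conditional expectation $\Rightarrow$ (via Lemma~\ref{lem:faith-cond-exp-->G-rigid}) tightness of $C(X)$ inside the top algebra $\Rightarrow$ (via the joining) co-tightness of $L^\infty(Y,\eta)$ inside the top algebra $\Rightarrow$ (via Lemma~\ref{lem:cotight->noWZim}) no proper weakly Zimmer amenable intermediate object $\Rightarrow$ $L^\infty(Z,\om)$ equals the top algebra.
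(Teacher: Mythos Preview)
Your proposal is correct and follows essentially the same route as the paper's proof: use the relatively measure-preserving hypothesis to get a faithful $G$-equivariant conditional expectation onto $L^\infty(X,\nu)$, apply Lemma~\ref{lem:faith-cond-exp-->G-rigid} to promote tightness of $C(X)$ up to $L^\infty(\tilde Y,\tilde\eta)$, use the joining hypothesis to conclude co-tightness of $\inc{L^\infty(Y,\eta)}{L^\infty(\tilde Y,\tilde\eta)}$, and finish with Lemma~\ref{lem:cotight->noWZim}. One small notational slip: in your first substantive step you write ``$\phi\colon(\tilde Y,\tilde\eta)\to(X,\nu)$'', but $\phi$ is the map to $(Y,\eta)$; the relatively measure-preserving map to $(X,\nu)$ is a separate (unnamed) factor map in the statement.
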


In \cite{FursGlas10}, Furstenberg and Glasner, in their attempt to formulate an abstract version of Nevo--Zimmer's structure theorem \cite{nevo2020structure} for stationary actions of higher rank semisimple Lie groups, they introduced and studied the notion of \emph{standard covers}. 
Let us recall that a $(G,\mu)$-space $(\tilde{Y},\tilde{\eta})$ is called \emph{standard} if it is a relatively measure-preserving extension of a $\mu$-boundary. 
The Structure Theorem of Furstenberg--Glasner~\cite[Theorem 4.3]{FursGlas10} states that for each  $(G,\mu)$-space $(Y,\eta)$, there exists a unique standard space $(\tilde{Y},\tilde{\eta})$, called the {standard cover of $(Y,\eta)$}, with the property that there exists a $\mu$-boundary $(X,\theta)$ which is a relatively measure-preserving factor of $(\tilde{Y},\tilde{\eta})$, and such that $(\tilde{Y},\tilde \eta)$ is a joining of $(Y,\eta)$ and $(X,\theta)$. Using this terminology, Nevo--Zimmer's theorem~\cite{nevo2020structure} states that for higher-rank semisimple Lie groups many stationary actions are standard. Moreover, in this setup, not only that the spaces are standard, but the $\mu$-boundary $(X,\theta)$ admits a USB model.

\begin{thm}\label{thm:Zamen-int-fact}
Let $G$ be a lcsc group and $\mu\in\pr(G)$ an admissible  measure. Let $(Y, \eta)$ be a $(G, \mu)$-space with the standard cover $(\tilde Y, \tilde \eta)$ realized by the factor maps $\phi\colon (\tilde Y, \tilde \eta)\to (Y, \eta)$ and $\phi'\colon (\tilde Y, \tilde \eta)\to (X, \theta)$, where $(X, \theta)$ is a $\mu$-boundary and $\phi'$ is relatively measure-preserving. Let $(Z,\om)$ be a weakly Zimmer amenable $(G, \mu)$-space, and $(\tilde Y, \tilde \eta) \xrightarrow{\varphi} (Z,\om) \xrightarrow{\psi} (Y, \eta)$ are $G$-maps such that $\psi\circ\varphi=\phi$. If, either
\begin{itemize}
\item [(i)\,]
$(X, \theta)$ has a USB model, or
\item [(ii)]
the Furstenberg-Poisson Boundary of $(G, \mu)$ has a USB model,
\end{itemize}
then $(\tilde Y, \tilde \eta) \overset{\varphi}{\cong} (Z,\om)$.
\end{thm}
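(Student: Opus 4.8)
The plan is to reduce both cases (i) and (ii) to Proposition~\ref{prop:Zamen-int-fact}, which already handles the situation of a $G$-tight measure on a compact model of a $\mu$-boundary sitting inside a relatively measure-preserving joining. So the heart of the matter is producing, from the hypotheses, the data that Proposition~\ref{prop:Zamen-int-fact} requires: a compact $G$-space carrying a $G$-tight measure, whose associated $L^\infty$ sits inside $L^\infty(\tilde Y, \tilde\eta)$ as a relatively measure-preserving factor, and such that $\tilde\eta$ is a joining of $\eta$ with that tight measure.

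In case (i), the $\mu$-boundary $(X,\theta)$ is assumed to have a USB model, i.e.\ a minimal compact $G$-space $X_0$ with a unique $\mu$-stationary measure $\theta_0$ making $(X_0,\theta_0)$ a $\mu$-boundary $G$-isomorphic to $(X,\theta)$. By Theorem~\ref{thm:USB->rigid}, the measure-class of $\theta_0$ is $G$-tight on $X_0$. The standard-cover data gives $\phi'\colon(\tilde Y,\tilde\eta)\to(X,\theta)\cong(X_0,\theta_0)$ relatively measure-preserving, and $(\tilde Y,\tilde\eta)$ is a joining of $(Y,\eta)$ and $(X,\theta)$. Thus all hypotheses of Proposition~\ref{prop:Zamen-int-fact} hold with $(X,\nu)$ there being $(X_0,\theta_0)$, and we conclude $(\tilde Y,\tilde\eta)\overset{\varphi}{\cong}(Z,\om)$ directly.

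Case (ii) is the one requiring a little more work: here we only know the Furstenberg-Poisson Boundary $(B,\nu_B)$ of $(G,\mu)$ has a USB model, not the particular $\mu$-boundary $(X,\theta)$ appearing in the standard cover of $(Y,\eta)$. The idea is to enlarge $\tilde Y$ so that $(X,\theta)$ is replaced by $(B,\nu_B)$. Concretely, let $(\hat Y,\hat\eta)$ be the relatively-measure-preserving extension of $(\tilde Y,\tilde\eta)$ obtained as (an ergodic component of) the relatively independent joining of $(\tilde Y,\tilde\eta)$ with $(B,\nu_B)$ over $(X,\theta)$ — using that $(B,\nu_B)\to(X,\theta)$ is itself relatively measure-preserving, since a factor map between $\mu$-boundaries is an isomorphism, so in fact $(X,\theta)$ must already equal $(B,\nu_B)$. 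Wait — more carefully: every $\mu$-boundary is a factor of $(B,\nu_B)$, and the only endomorphism of a $\mu$-boundary is the identity (Lemma~\ref{lem:normal-self-tight}), so a $\mu$-boundary that is a \emph{relatively measure-preserving} factor of $(\tilde Y,\tilde\eta)$ need not be all of $B$; nonetheless one can form $\hat Y := \tilde Y \times_X B$, which is a relatively measure-preserving extension of both $\tilde Y$ and of $B$, and which is a joining of $(Y,\eta)$ with $(B,\nu_B)$. Taking $X_0$ a USB compact model of $B$, Proposition~\ref{prop:Zamen-int-fact} applies to the chain with $\tilde Y$ replaced by $\hat Y$: since $(\hat Y,\hat\eta)\to(Z,\om)$ factors through $\psi\circ\varphi=\phi$ (composing with $\hat Y\to\tilde Y$), and $(Z,\om)$ is weakly Zimmer amenable, we get $(\hat Y,\hat\eta)\cong(Z,\om)$ over $Y$. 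But $\hat Y$ is a relatively measure-preserving extension of $\tilde Y$, and $\tilde Y$ is already a standard cover, hence maximal among such; so $\hat Y=\tilde Y$, and then $(\tilde Y,\tilde\eta)\overset{\varphi}{\cong}(Z,\om)$.

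The main obstacle I expect is the bookkeeping in case (ii): verifying that the auxiliary joining $\hat Y=\tilde Y\times_X B$ is genuinely a relatively measure-preserving extension of a compact USB model, is still a joining of $(Y,\eta)$ with that model, and that the factor maps can be arranged so that $\hat Y\to Z$ still factors through $\phi$ — and, crucially, that $\hat Y$ collapses back to $\tilde Y$ by maximality/uniqueness of the standard cover in \cite[Theorem 4.3]{FursGlas10}. Case (i) should be essentially immediate once the USB model is invoked through Theorem~\ref{thm:USB->rigid}. One should also double-check that weak Zimmer amenability is a property of the measure-class, so the identifications of $L^\infty$-algebras in Proposition~\ref{prop:Zamen-int-fact} do not affect the hypothesis on $(Z,\om)$.
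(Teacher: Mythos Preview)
For case (i), your reduction is exactly the paper's argument.

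For case (ii), the paper takes a different and more direct route than your auxiliary-joining construction. Rather than enlarging $\tilde Y$, it observes that $(\tilde Y,\tilde\eta)$ inherits weak Zimmer amenability from $(Z,\omega)$ (being a $G$-extension of it), so there is a $G$-equivariant ucp map $C(B)\to L^\infty(\tilde Y,\tilde\eta)$. By Lemma~\ref{lem:quasi-factor} and the USB property (\cite[Corollary 2.10(a)]{Margulis-book-91}) this is induced by an honest $G$-factor map $\rho\colon(\tilde Y,\tilde\eta)\to(B,\nu)$. Lemma~\ref{lem:structure-proximal} then forces ${\rm\bf bnd}\circ\rho=\phi'$, from which $\rho$ is relatively measure-preserving (and in fact $(X,\theta)=(B,\nu)$, as noted in the remark following that lemma). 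One then applies Proposition~\ref{prop:Zamen-int-fact} to $\tilde Y$ itself, with no auxiliary space. Your approach trades this analytic input (weak Zimmer amenability producing the map $\rho$) for a purely structural construction; both arrive at the same destination.

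That said, your final step in case (ii) has a genuine gap. You assert that $\hat Y\to\tilde Y$ is relatively measure-preserving; unpacking the disintegration, this would require ${\rm\bf bnd}\colon(B,\nu_B)\to(X,\theta)$ to be relatively measure-preserving, which you have not shown (and which a posteriori is equivalent to $X=B$). The appeal to ``maximality'' of the standard cover is likewise not a property stated in \cite{FursGlas10}. The simple fix is to bypass this entirely: once Proposition~\ref{prop:Zamen-int-fact} gives $L^\infty(\hat Y,\hat\eta)=L^\infty(Z,\omega)$, the chain $L^\infty(Z,\omega)\subset L^\infty(\tilde Y,\tilde\eta)\subset L^\infty(\hat Y,\hat\eta)$ collapses, yielding $(\tilde Y,\tilde\eta)\overset{\varphi}{\cong}(Z,\omega)$ immediately. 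Alternatively, you can invoke the \emph{uniqueness} (not maximality) of the standard cover: you did verify that $\hat Y$ is a joining of $(Y,\eta)$ with the $\mu$-boundary $(B,\nu_B)$ and is relatively measure-preserving over $B$, so $\hat Y$ satisfies the characterizing property of the standard cover and hence $\hat Y\cong\tilde Y$ by \cite[Theorem~4.3]{FursGlas10}.
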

\begin{proof}
(i)\ Without loss of generality, we may assume $(X, \theta)$ is $\mu$-USB. Then $\theta$ is a tight measure by Theorem~\ref{thm:USB->rigid}, and therefore the assertion follows from Proposition~\ref{prop:Zamen-int-fact}.\\
(ii)\ Let $(B, \nu)$ denote a USB model of the Furstenberg-Poisson Boundary of $(G, \mu)$. The space $(\tilde Y, \tilde \eta)$ is weakly Zimmer amenable as it is a $G$-extension of a weakly Zimmer amenable space $(Z,\omega)$, so there is a $G$-equivariant ucp map from $C(B)$ to $L^\infty(\tilde Y, \tilde \eta)$. By Lemma~\ref{lem:quasi-factor}, there exists a measurable $G$-equivariant map $\rho\colon\tilde Y \to \pr(B)$. Since $(B, \nu)$ is USB, it follows from \cite[Corollary 2.10(a)]{Margulis-book-91} that $\rho$ is mapped into delta measures, hence a $G$-equivariant factor map $\rho\colon(\tilde Y, \tilde \eta) \to (B, \nu)$ (cf. \cite[Theorem 9.2(1)]{NevSag13}). 
We claim that $\phi'={\rm\bf bnd}\circ\rho$, where ${\rm\bf bnd}\colon (B, \nu)\to (X, \theta)$ is the canonical $G$-factor map. The claim then implies $\rho$ is relatively measure-preserving, hence put us in the setup of Proposition~\ref{prop:Zamen-int-fact}, and completes the proof. The claim, indeed, holds in more generality where USB assumption is not needed. We state this in the following lemma.
\end{proof}
\begin{lemma}\label{lem:structure-proximal}
Let $G$ be a lcsc group and let $\mu\in\pr(G)$ be an admissible measure. Let $(X, \theta)$ be a $(G, \mu)$-boundary, let $(Y, \eta)$ be a $(G, \mu)$-space, and $\rho\colon (Y,\eta)\to (X,\theta)$ be a relatively measure-preserving factor map. Then $\rho^*$ is the unique normal ucp map from $L^\infty(X,\theta)$ to $L^\infty(Y, \eta)$.
\begin{proof}
Let $\Phi:L^\infty(X,\theta)\to L^\infty(Y, \eta)$ be a normal $G$-equivariant ucp map.  
Denote by $\bE\colon L^\infty(Y, \eta) \to \rho^*(L^\infty(X,\theta))$ the canonical conditional expectation, which is $G$-equivariant by the assumption. Then $\bE\circ\Phi$ is a normal unital positive $G$-equivariant map from  $L^\infty(X,\theta)$ to $\rho^*(L^\infty(X,\theta))$, hence coincides with $\rho^*$ by Lemma~\ref{lem:normal-self-tight}. Since $\bE$ is faithful, it follows $\rho^*=\Phi$.
\end{proof}
\end{lemma}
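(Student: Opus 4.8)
The plan is to reduce the statement to the known self-tightness of $\mu$-boundaries, namely Lemma~\ref{lem:normal-self-tight}, by pre-composing with the canonical conditional expectation attached to the relatively measure-preserving factor map $\rho$. First I would set up notation: let $\Phi\colon L^\infty(X,\theta)\to L^\infty(Y,\eta)$ be an arbitrary normal $G$-equivariant ucp map, and let $\bE\colon L^\infty(Y,\eta)\to\rho^*(L^\infty(X,\theta))$ be the conditional expectation onto the (copy of the) factor algebra. The hypothesis that $\rho$ is relatively measure-preserving is precisely the statement that $\bE$ is $G$-equivariant. Since $\bE$ is also normal, the composition $\bE\circ\Phi$ is a normal $G$-equivariant unital positive map from $L^\infty(X,\theta)$ into $\rho^*(L^\infty(X,\theta))\cong L^\infty(X,\theta)$.

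Next I would invoke Lemma~\ref{lem:normal-self-tight}: because $(X,\theta)$ is a $\mu$-boundary, the only normal equivariant unital positive self-map of $L^\infty(X,\theta)$ is the identity. Hence $\bE\circ\Phi=\rho^*$ (identifying $L^\infty(X,\theta)$ with its image $\rho^*(L^\infty(X,\theta))$ throughout). It then remains to upgrade this identity ``after applying $\bE$'' to an identity ``before applying $\bE$'', which is exactly the faithfulness argument already used in the proof of Lemma~\ref{lem:faith-cond-exp-->G-rigid}: for each $a\in L^\infty(X,\theta)$, apply the Schwarz inequality for the ucp map $\Phi$ together with the fact that $\bE$ is an $\rho^*(L^\infty(X,\theta))$-bimodule map and $\bE\circ\Phi=\rho^*$, to conclude
\[
\bE\big((\rho^*(a)-\Phi(a))^*(\rho^*(a)-\Phi(a))\big)=0,
\]
and then faithfulness of $\bE$ forces $\Phi(a)=\rho^*(a)$. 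Thus $\Phi=\rho^*$, and since $\rho^*$ is itself a normal $G$-equivariant ucp map, it is the unique one.

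I do not expect a genuine obstacle here; the only point requiring care is the bookkeeping of the identification $L^\infty(X,\theta)\cong\rho^*(L^\infty(X,\theta))$ so that Lemma~\ref{lem:normal-self-tight} can be applied to an honest self-map, and making sure the conditional expectation $\bE$ (whose existence and equivariance come from the relatively measure-preserving hypothesis) is indeed normal, which is automatic since it is a conditional expectation between von Neumann algebras implemented by disintegration. In effect this lemma is just the ``normal'' analogue of Lemma~\ref{lem:faith-cond-exp-->G-rigid}, with Lemma~\ref{lem:normal-self-tight} playing the role that $G$-tightness of $\inc{\sA}{\sB}$ plays there, so the argument is a direct transcription.
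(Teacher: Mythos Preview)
Your proposal is correct and follows essentially the same argument as the paper: compose an arbitrary normal equivariant ucp map $\Phi$ with the (equivariant, by the relatively measure-preserving hypothesis) conditional expectation $\bE$, apply Lemma~\ref{lem:normal-self-tight} to get $\bE\circ\Phi=\rho^*$, and then use faithfulness of $\bE$ to deduce $\Phi=\rho^*$. You actually spell out the Schwarz-inequality step in more detail than the paper, which simply asserts ``since $\bE$ is faithful, it follows $\rho^*=\Phi$''.
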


\begin{remark}
Note that Lemma~\ref{lem:structure-proximal} also implies in the proof of part (ii) above, that the map ${\rm\bf bnd}$ is relatively measure-preserving. It is not hard to see that any relatively measure-preserving factor map between measurable boundaries is an isomorphism. Thus, the assumption (ii) in fact implies $(X, \theta)= (B, \nu)$.
\end{remark}

In the noncommutative setting, co-tight inclusions appear naturally in the setting of covariant representations of actions of tight inclusions. 

\begin{prop}\label{prop:no-amen-inter-obj}
Let $\G$ be a discrete group and $\sA\in\Op_{\G}$. Suppose that $(\pi,\rho)$ is a covariant representation of $(\G, \sA)$ such that the inclusion $\inc{\rho(\sA)}{ \G\ltimes_{\scriptscriptstyle\tiny\pi}^{\scriptscriptstyle\tiny\rho}\sA}$ is $\G$-tight. Let $\sD\in \Op_{\G}$ be an intermediate object for the inclusion $\inc{C^*_\pi(\G)}{\G\ltimes_{\scriptscriptstyle\tiny\pi}^{\scriptscriptstyle\tiny\rho}\sA}$. If there is a $\G$-equivariant ucp map from $\G\ltimes_{\scriptscriptstyle\tiny\pi}^{\scriptscriptstyle\tiny\rho}\sA$ to $\sD$, then $\sD=\G\ltimes_{\scriptscriptstyle\tiny\pi}^{\scriptscriptstyle\tiny\rho}\sA$. 

In particular, if $\sD$ is either weakly Zimmer-amenable or injective, then $\sD=\G\ltimes_{\scriptscriptstyle\tiny\pi}^{\scriptscriptstyle\tiny\rho}\sA$.
\begin{proof}
Note that since the inclusion $\inc{\rho(\sA)}{ \G\ltimes_{\scriptscriptstyle\tiny\pi}^{\scriptscriptstyle\tiny\rho}\sA}$ is $\G$-tight and $\G\ltimes_{\scriptscriptstyle\tiny\pi}^{\scriptscriptstyle\tiny\rho}\sA$ is generated by $\rho(\sA)$ and $C^*_\pi(\G)$, the inclusion $\inc{C^*_\pi(\G)}{ \G\ltimes_{\scriptscriptstyle\tiny\pi}^{\scriptscriptstyle\tiny\rho}\sA}$ is co-tight. Thus, the claim follows from Lemma~\ref{lem:cotight->noRelZimInt} and the fact that any intermediate object for a co-tight inclusion is also co-tight.

The case of weakly Zimmer-amenable $\sD$ follows from Lemma~\ref{lem:cotight->noWZim}.
If $\sD$ is injective, then there is a conditional expectation $\bE\colon \G\ltimes_{\scriptscriptstyle\tiny\pi}^{\scriptscriptstyle\tiny\rho}\sA\to\sD$, which is automatically $\G$-equivariant since $\G$ is in the multiplicative domain of $\bE$. Thus, the claim follows from the above.
\end{proof}
\end{prop}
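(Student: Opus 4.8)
The plan is to deduce the statement from the co-tightness machinery of this section, specifically from Lemmas~\ref{lem:cotight->noRelZimInt} and~\ref{lem:cotight->noWZim}. First I would record the immediate structural observation that $\G\ltimes_\pi^\rho\sA$ is generated, as a $C^*$- or von Neumann algebra according to the type of $\sA$, by the two $\G$-invariant subalgebras $\rho(\sA)$ and $C^*_\pi(\G)$, since each distinguished generator $\rho(a)\pi(g)$ is the product of an element of $\rho(\sA)$ with an element of $\pi(\G)\subseteq C^*_\pi(\G)$. Combined with the hypothesis that $\inc{\rho(\sA)}{\G\ltimes_\pi^\rho\sA}$ is $\G$-tight, this says exactly that $\inc{C^*_\pi(\G)}{\G\ltimes_\pi^\rho\sA}$ is co-tight, witnessed by the $\G$-tight inclusion $\inc{\rho(\sA)}{\G\ltimes_\pi^\rho\sA}$. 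Moreover, for any intermediate object $\sD$ we have $\rho(\sA)\vee\sD=\G\ltimes_\pi^\rho\sA$ (because $C^*_\pi(\G)\subseteq\sD$), so $\inc{\sD}{\G\ltimes_\pi^\rho\sA}$ is again co-tight, with the same witness.

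Given a $\G$-equivariant ucp map $\psi\colon\G\ltimes_\pi^\rho\sA\to\sD$, I would then apply Lemma~\ref{lem:cotight->noRelZimInt} to the co-tight inclusion $\inc{\sD}{\G\ltimes_\pi^\rho\sA}$ to conclude at once $\sD=\G\ltimes_\pi^\rho\sA$. Unwound, this is simply the observation that the restriction of $\psi$ to $\rho(\sA)$ is a $\G$-equivariant ucp map into $\G\ltimes_\pi^\rho\sA$, hence equals the inclusion by $\G$-tightness, so $\rho(\sA)\subseteq\sD$ and therefore $\sD\supseteq\rho(\sA)\vee C^*_\pi(\G)=\G\ltimes_\pi^\rho\sA$.

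Finally I would dispatch the two particular cases. If $\sD$ is weakly Zimmer-amenable, then by definition there is a $\G$-equivariant ucp map from $\G\ltimes_\pi^\rho\sA$ into $\sD$ --- equivalently, this is Lemma~\ref{lem:cotight->noWZim} applied to the co-tight inclusion $\inc{C^*_\pi(\G)}{\G\ltimes_\pi^\rho\sA}$ --- so the general case applies. If $\sD$ is injective, I would use injectivity to extend $\id_\sD$ to a ucp map $\bE\colon\G\ltimes_\pi^\rho\sA\to\sD$, which is then a conditional expectation. The only step in the whole argument that is not pure bookkeeping --- and thus the point I would expect to require a little care --- is verifying that $\bE$ is automatically $\G$-equivariant: the unitaries $\pi(g)\in C^*_\pi(\G)\subseteq\sD$ are fixed by $\bE$, hence lie in its multiplicative domain, so $\bE\circ\Ad_{\pi(g)}=\Ad_{\pi(g)}\circ\bE$ for every $g\in\G$, i.e.\ $\bE$ intertwines the $\G$-actions. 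Having produced such a $\bE$, the previous paragraph finishes the proof.
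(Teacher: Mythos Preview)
Your proposal is correct and follows essentially the same route as the paper's proof: establish co-tightness of $\inc{C^*_\pi(\G)}{\G\ltimes_\pi^\rho\sA}$ (and of the intermediate inclusion $\inc{\sD}{\G\ltimes_\pi^\rho\sA}$), invoke Lemma~\ref{lem:cotight->noRelZimInt}, then handle the weakly Zimmer-amenable and injective cases exactly as the paper does, including the multiplicative-domain argument for equivariance of the conditional expectation. Your write-up is in fact a bit more explicit than the paper's in spelling out why intermediate objects inherit co-tightness.
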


We denote by $C^*_{\rm red}(\G)$ the reduced $C^*$-algebra of $\G$, that is, the $C^*$-algebra generated by the left regular representation of $\G$ on $\ell^2(\G)$.
\begin{cor}\label{cor:no-amen-inter-obj-crssed}
Let $\G$ be a discrete group, let $\sA\in\Op_{\G}$ be $\G$-self-tight, and let $\sD$ be an intermediate object for the inclusion $\inc{C^*_{\rm red}(\G)}{\G\ltimes \sA}$. If $\sD$ is either weakly Zimmer-amenable or injective, then $\sD=\G\ltimes \sA$.
\begin{proof}
By Corollary~\ref{cor:tight->crsprdt-tight}, the inclusion $\inc{\sA}{\G\ltimes\sA}$ is $\G$-tight.  Hence, the result follows from Proposition~\ref{prop:no-amen-inter-obj}.
\end{proof}
\end{cor}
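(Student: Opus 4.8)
The plan is to deduce this as a direct specialization of Proposition~\ref{prop:no-amen-inter-obj} once the correct covariant representation is fixed. First I would take $(\pi,\rho)$ to be the regular covariant representation of $(\G,\sA)$ underlying the (reduced $C^*$- or von Neumann, as appropriate) crossed product, so that $\G\ltimes_{\scriptscriptstyle\tiny\pi}^{\scriptscriptstyle\tiny\rho}\sA=\G\ltimes\sA$, the subalgebra $C^*_\pi(\G)$ is exactly $C^*_{\rm red}(\G)$, and $\rho$ identifies $\sA$ with a $\G$-invariant subalgebra of $\G\ltimes\sA$. The hypothesis that $\sA$ is $\G$-self-tight is, by definition, the assertion that $\inc{\sA}{\sA}$ is $\G$-tight; feeding this into Corollary~\ref{cor:tight->crsprdt-tight} (whose proof uses the canonical faithful $\G$-equivariant conditional expectation $\bE_0\colon\G\ltimes\sA\to\sA$ together with Lemma~\ref{lem:faith-cond-exp-->G-rigid}) yields that $\inc{\rho(\sA)}{\G\ltimes_{\scriptscriptstyle\tiny\pi}^{\scriptscriptstyle\tiny\rho}\sA}$ is $\G$-tight. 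This is precisely the standing hypothesis of Proposition~\ref{prop:no-amen-inter-obj}.

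With that in place, let $\sD$ be an intermediate object for $\inc{C^*_{\rm red}(\G)}{\G\ltimes\sA}$. If $\sD$ is weakly Zimmer amenable, then by definition there is a $\G$-equivariant ucp map $\G\ltimes\sA\to\sD$, and Proposition~\ref{prop:no-amen-inter-obj} immediately forces $\sD=\G\ltimes\sA$. If instead $\sD$ is injective, injectivity in the category of unital $C^*$-algebras produces a ucp conditional expectation $\bE\colon\G\ltimes\sA\to\sD$; since $C^*_{\rm red}(\G)\subseteq\sD$, each unitary $\pi(g)$ lies in the multiplicative domain of $\bE$, so $\bE$ is automatically $\G$-equivariant, and Proposition~\ref{prop:no-amen-inter-obj} again gives $\sD=\G\ltimes\sA$. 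In fact both cases are already packaged in the final sentence of that proposition, so the cleanest write-up is simply to verify its hypothesis and cite it.

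I do not expect a substantial obstacle here: the conceptual content — that $\inc{C^*_{\rm red}(\G)}{\G\ltimes\sA}$ is co-tight, and that co-tight inclusions admit no proper (weakly) Zimmer amenable or injective intermediate objects — has already been carried out in Lemmas~\ref{lem:cotight->noRelZimInt} and~\ref{lem:cotight->noWZim} and Proposition~\ref{prop:no-amen-inter-obj}. The only point deserving care is the purely bookkeeping step in the first paragraph, namely checking that ``$\sA$ is $\G$-self-tight'' is literally the tightness of $\inc{\sA}{\sA}$ so that Corollary~\ref{cor:tight->crsprdt-tight} applies verbatim; after that the result is a one-line invocation.
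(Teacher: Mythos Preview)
Your proposal is correct and follows essentially the same route as the paper: use self-tightness of $\sA$ together with Corollary~\ref{cor:tight->crsprdt-tight} to get that $\inc{\sA}{\G\ltimes\sA}$ is $\G$-tight, then invoke Proposition~\ref{prop:no-amen-inter-obj} with the regular covariant representation. The paper's own proof is just the two-sentence version of what you wrote.
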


The above corollary applies, for example, in the case $\sA=C(X)$ where $X$ is a topological boundary (see Example~\ref{top_bondaries}).

\begin{cor}\label{cor:no-amen-inter-obj-vn}
Let $\G$ be a discrete group, $\sB$ a $\G$-von Neumann algebra, and $\sA$ a weak*-dense $\G$-$C^*$-subalgebra of $\sB$ such that $\inc{\sA}{\sB}$ is $\G$-tight. Suppose that $\sD$ is an intermediate $\G$-von Neumann algebra for the crossed product, that is, $L\G\subset \sD\subset  \G\ltimes \sB$. If $\sD$ is either weakly Zimmer-amenable or injective, then $\sD=\G\ltimes \sB$.
\end{cor}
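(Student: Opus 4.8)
The plan is to mimic the proof of Proposition~\ref{prop:no-amen-inter-obj} (equivalently Corollary~\ref{cor:no-amen-inter-obj-crssed}) in the von Neumann setting, the only genuinely new point being that $\sA$ is assumed merely weak*-dense in $\sB$ rather than equal to it.

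First I would invoke Corollary~\ref{cor:tight->crsprdt-tight}: since $\inc{\sA}{\sB}$ is $\G$-tight, the canonical inclusion $\inc{\sA}{\G\ltimes\sB}$ into the von Neumann crossed product is $\G$-tight as well. Next I would check the identity $\G\ltimes\sB=\sA\vee L\G$ (a von Neumann algebra generation): the copy of $\sB$ sitting inside $\G\ltimes\sB$ is normal and $\sA$ is weak*-dense in it, so the von Neumann algebra generated by $\sA$ and $L\G$ already contains $\sB$ together with $L\G$, hence is all of $\G\ltimes\sB$. Consequently the inclusion $\inc{L\G}{\G\ltimes\sB}$ is co-tight, with $\inc{\sA}{\G\ltimes\sB}$ as a witnessing $\G$-tight inclusion. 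Since $L\G\subset\sD$, the same witness gives $\sA\vee\sD=\G\ltimes\sB$, so the intermediate inclusion $\inc{\sD}{\G\ltimes\sB}$ is co-tight too.

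It then remains to produce a $\G$-equivariant ucp map $\G\ltimes\sB\to\sD$ in each case and quote the co-tightness lemmas. If $\sD$ is weakly Zimmer amenable, such a map exists by definition, and Lemma~\ref{lem:cotight->noWZim} (the co-tight inclusion $\inc{\sD}{\G\ltimes\sB}$ has no proper weakly Zimmer-amenable intermediate objects, $\sD$ being trivially one) forces $\sD=\G\ltimes\sB$. If $\sD$ is injective, I would realize $\sD\subset\G\ltimes\sB$ concretely on a Hilbert space, take a ucp projection onto $\sD$ supplied by injectivity, and restrict it to $\G\ltimes\sB$ to obtain a conditional expectation $\bE\colon\G\ltimes\sB\to\sD$ by Tomiyama's theorem; since the implementing unitaries $\pi(g)$ lie in $L\G\subset\sD$ and $\bE$ is a $\sD$-bimodule map, $\bE$ is $\G$-equivariant. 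Lemma~\ref{lem:cotight->noRelZimInt} then yields $\sD=\G\ltimes\sB$.

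There is no serious obstacle: the statement is essentially a packaging of results already proved. The only two points needing a word of care are (a) the equality $\G\ltimes\sB=\sA\vee L\G$, which uses normality of the embedding $\sB\hookrightarrow\G\ltimes\sB$ together with the weak*-density of $\sA$ in $\sB$, and (b) the $\G$-equivariance of the conditional expectation in the injective case, which comes from the inclusion $\pi(\G)\subset L\G\subset\sD$ placing $\G$ in the multiplicative domain of $\bE$.
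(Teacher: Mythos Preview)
Your proposal is correct and follows essentially the same approach as the paper. The paper's proof simply cites Corollary~\ref{cor:tight->crsprdt-tight} to get $\G$-tightness of $\inc{\sA}{\G\ltimes\sB}$ and then invokes Proposition~\ref{prop:no-amen-inter-obj} directly; your argument unpacks that invocation (co-tightness of $\inc{L\G}{\G\ltimes\sB}$, the weak*-density point $\G\ltimes\sB=\sA\vee L\G$, and the two cases for $\sD$) rather than quoting it as a black box, but the logic is identical.
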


\begin{proof}
By Corollary~\ref{cor:tight->crsprdt-tight}, the inclusion $\sA\subset \G\ltimes\sB$ is also $\G$-tight. Thus, the result follows from Proposition~\ref{prop:no-amen-inter-obj}.
\end{proof}

Next, we prove noncommutative generalizations of earlier results of this section on intermediate objects in the discrete group case, and apply them to prove a maximal injectivity result (Corollary~\ref{cor:max-inj-vn}).

We begin with an extension of Lemma~\ref{lem:cotight->noWZim}.
\begin{lem}\label{lem:nc-co-tight->noZamen-int}
Let $\G$ be a countable discrete group, and $\sC\subset\sB$ a co-tight inclusion of objects in $\Op_{\G}$. Then the inclusion $\G\ltimes\sC\subset\G\ltimes\sB$ is co-tight, hence, in particular, has no injective or weakly Zimmer amenable intermediate proper objects.
\begin{proof}
By co-tightness, there is a $\G$-tight inclusion $\sA\subset\sB$ such that $\sB=\sA\vee\sC$. Then the inclusion $\sA\subset\G\ltimes\sB$ is tight by Corollary~\ref{cor:tight->crsprdt-tight}, which then implies the inclusion $\G\ltimes\sC\subset\G\ltimes\sB$ is co-tight. Hence, arguing similarly as in the proof of Proposition~\ref{prop:no-amen-inter-obj}, we conclude that the inclusion has no injective or weakly Zimmer amenable intermediate proper objects.
\end{proof}
\end{lem}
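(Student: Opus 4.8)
The statement to prove is: given a co-tight inclusion $\sC \subset \sB$ in $\Op_\G$, the induced inclusion $\G\ltimes\sC \subset \G\ltimes\sB$ is again co-tight, and consequently has no proper intermediate objects that are injective or weakly Zimmer amenable. The strategy is to unwind the definition of co-tightness and reduce everything to tools already established in the paper, namely Corollary~\ref{cor:tight->crsprdt-tight} (tightness passes to crossed products) and the argument pattern of Proposition~\ref{prop:no-amen-inter-obj}.

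\begin{proof}
By definition of co-tightness of $\inc{\sC}{\sB}$, there exists a $\G$-tight inclusion $\inc{\sA}{\sB}$ with $\sB = \sA \vee \sC$. By Corollary~\ref{cor:tight->crsprdt-tight}, the canonical inclusion $\inc{\sA}{\G\ltimes\sB}$ is $\G$-tight. Moreover, since $\sB$ is generated (as a $C^*$- or von Neumann algebra, according to the type of $\sB$) by $\sA$ and $\sC$, the crossed product $\G\ltimes\sB$ is generated by $\sA$, $\sC$, and the canonical copy of $\G$; as this copy of $\G$ already lies in $\G\ltimes\sC$, we get $\G\ltimes\sB = \sA \vee (\G\ltimes\sC)$. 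Hence the inclusion $\inc{\G\ltimes\sC}{\G\ltimes\sB}$ is co-tight.

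For the final assertion, let $\sD$ be an intermediate object, $\G\ltimes\sC \subset \sD \subset \G\ltimes\sB$, and suppose there is a $\G$-equivariant ucp map $\Psi\colon \G\ltimes\sB \to \sD$; this holds in particular if $\sD$ is weakly Zimmer amenable, and also if $\sD$ is injective, since then there is a conditional expectation $\G\ltimes\sB\to\sD$ which is automatically $\G$-equivariant because $\G$ lies in its multiplicative domain. Composing $\Psi$ with the inclusion $\sD\hookrightarrow\G\ltimes\sB$ gives a $\G$-equivariant ucp map $\G\ltimes\sB\to\G\ltimes\sB$ restricting to the identity on $\sA$ by $\G$-tightness of $\inc{\sA}{\G\ltimes\sB}$; hence $\Psi$ itself restricts to the identity on $\sA$, so $\sA\subset\sD$. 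Combined with $\G\ltimes\sC\subset\sD$ and $\G\ltimes\sB = \sA\vee(\G\ltimes\sC)$, this forces $\sD = \G\ltimes\sB$.
\end{proof}

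The only point requiring a moment's care is the identification $\G\ltimes\sB = \sA \vee (\G\ltimes\sC)$: one must check that the unitaries implementing the $\G$-action, which generate $\G\ltimes\sB$ together with $\sB$, are already present on the right-hand side — they are, because they sit inside $\G\ltimes\sC$. Everything else is a direct invocation of Corollary~\ref{cor:tight->crsprdt-tight} and the co-tightness machinery of Lemma~\ref{lem:cotight->noRelZimInt} and Proposition~\ref{prop:no-amen-inter-obj}, so I do not anticipate any genuine obstacle here; the lemma is essentially a bookkeeping consequence of the preceding results.
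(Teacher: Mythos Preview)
Your proof is correct and follows essentially the same approach as the paper: unwind co-tightness to get $\sA$, invoke Corollary~\ref{cor:tight->crsprdt-tight} for tightness of $\sA\subset\G\ltimes\sB$, note that $\G\ltimes\sB=\sA\vee(\G\ltimes\sC)$, and then run the argument of Proposition~\ref{prop:no-amen-inter-obj} for the intermediate-object conclusion. You are in fact slightly more explicit than the paper about the generation identity and the injective case, but there is no substantive difference.
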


The following is the noncommutative extension of Proposition~\ref{prop:Zamen-int-fact}.

\begin{prop}\label{prop:nc-no-Z-int-gen}
Let $\G$ be a discrete group, $X$ a compact $\G$-space, and let $\nu$ be a $\G$-tight measure on $X$. Assume that $(\tilde Y, \tilde \eta)$ is a relatively measure-preserving extension of $(X, \nu)$, and $(Y,\eta)$ a $\G$-factor of $(\tilde Y, \tilde \eta)$ such that $(\tilde{Y},\tilde \eta)$ is a joining of $(Y,\eta)$ and $(X,\nu)$. Suppose $\G\ltimes L^\infty(Y, \eta)\subseteq M \subseteq \G\ltimes L^\infty(\tilde Y, \tilde \eta)$ is an inclusion of von Neumann algebras. If $M$ is injective, then $M = \G\ltimes L^\infty(\tilde Y, \tilde \eta)$.
\begin{proof}
As seen in the proof of Proposition~\ref{prop:Zamen-int-fact}, the assumptions imply that the inclusion $L^\infty(Y, \eta)\subseteq L^\infty(\tilde Y, \tilde \eta)$ is co-tight. Thus, the result follows from Lemma~\ref{lem:nc-co-tight->noZamen-int}.
\end{proof}
\end{prop}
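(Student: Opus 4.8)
The plan is to mimic the proof of Proposition~\ref{prop:Zamen-int-fact} at the von Neumann algebra level and then invoke Lemma~\ref{lem:nc-co-tight->noZamen-int}. Concretely, I would first show that the inclusion $L^\infty(Y, \eta)\subseteq L^\infty(\tilde Y, \tilde \eta)$ is co-tight, and then feed this into the crossed-product machinery.

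For the co-tightness step, I would argue as follows. Since $(\tilde Y, \tilde \eta)$ is a relatively measure-preserving extension of $(X, \nu)$, by definition there is a $\G$-equivariant conditional expectation $L^\infty(\tilde Y, \tilde \eta)\to L^\infty(X, \nu)$, and it is automatically faithful. Because $\nu$ is a $\G$-tight measure, the inclusion $C(X)\subseteq L^\infty(X, \nu)$ is $\G$-tight, so applying Lemma~\ref{lem:faith-cond-exp-->G-rigid} with $\sA=C(X)$, $\sB=L^\infty(X, \nu)$, $\sC=L^\infty(\tilde Y, \tilde \eta)$ yields that $C(X)\subseteq L^\infty(\tilde Y, \tilde \eta)$ is $\G$-tight. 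Next, since $\tilde \eta$ is a joining of $\eta$ and $\nu$, we have $L^\infty(\tilde Y, \tilde \eta)=L^\infty(Y, \eta)\vee L^\infty(X, \nu)$, and as $C(X)$ is weak*-dense in $L^\infty(X, \nu)$ this equals $L^\infty(Y, \eta)\vee C(X)$. Thus the $\G$-tight subalgebra $C(X)$ together with $L^\infty(Y, \eta)$ generates the whole algebra, so by definition the inclusion $L^\infty(Y, \eta)\subseteq L^\infty(\tilde Y, \tilde \eta)$ is co-tight.

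Finally, I would apply Lemma~\ref{lem:nc-co-tight->noZamen-int}: co-tightness of $L^\infty(Y, \eta)\subseteq L^\infty(\tilde Y, \tilde \eta)$ forces co-tightness of $\G\ltimes L^\infty(Y, \eta)\subseteq \G\ltimes L^\infty(\tilde Y, \tilde \eta)$, and a co-tight inclusion has no proper injective intermediate object. Since $M$ is injective and sits between these two crossed products, we conclude $M=\G\ltimes L^\infty(\tilde Y, \tilde \eta)$.

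I do not expect a serious obstacle here, as essentially all the work has been packaged into the earlier lemmas; the proof is a short bookkeeping argument. The one point deserving care is the identification $L^\infty(\tilde Y, \tilde \eta)=C(X)\vee L^\infty(Y, \eta)$ from the joining hypothesis, and—inside Lemma~\ref{lem:nc-co-tight->noZamen-int}—the passage of co-tightness through the crossed product, which rests on the faithful $\G$-equivariant canonical conditional expectation $\G\ltimes\sB\to\sB$ (Corollary~\ref{cor:tight->crsprdt-tight}). One should also recall that injectivity of the intermediate $M$ provides a conditional expectation $\G\ltimes L^\infty(\tilde Y, \tilde \eta)\to M$ which is automatically $\G$-equivariant, since $\G$ lies in its multiplicative domain; this is exactly what lets us apply the ``no $\G$-equivariant retraction'' formulation of co-tightness from Lemma~\ref{lem:cotight->noRelZimInt}.
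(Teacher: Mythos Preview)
Your proposal is correct and follows exactly the paper's approach: the paper simply cites the proof of Proposition~\ref{prop:Zamen-int-fact} for the co-tightness of $L^\infty(Y,\eta)\subseteq L^\infty(\tilde Y,\tilde\eta)$ (which is precisely the argument you spell out via Lemma~\ref{lem:faith-cond-exp-->G-rigid} and the joining hypothesis) and then invokes Lemma~\ref{lem:nc-co-tight->noZamen-int}. Your final paragraph unpacking the role of injectivity is also faithful to how Lemma~\ref{lem:nc-co-tight->noZamen-int} handles the injective case via the argument in Proposition~\ref{prop:no-amen-inter-obj}.
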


The setup of the Proposition~\ref{prop:nc-no-Z-int-gen} is quite general, provides large classes of examples. We single out two interesting cases in the following.

\begin{cor}\label{cor:no-inj-int-two-case}
Let $\G$ be a discrete group and $\mu\in\pr(\G)$ a generating measure such that the Furstenberg-Poisson Boundary $(B, \nu)$ of $(\G, \mu)$ has a uniquely stationary compact model.
Then the conclusion of Proposition~\ref{prop:nc-no-Z-int-gen} holds in the following two cases, for any $\G$-factor map $(\tilde Y, \tilde \eta)\to (Y, \eta)$:
\begin{itemize}
\item [(i)\,]
$(\tilde Y, \tilde \eta)$ is Zimmer amenable and is the standard cover of $(Y, \eta)$; or
\item [(ii)]
$(\tilde Y, \tilde \eta)= (B\times Z, \nu\times m)$ and $(Y, \eta)= (B, \nu)$, with the canonical factor map, for some ergodic pmp action $\G\act (Z, m)$.
\end{itemize}
\begin{proof}
Under the assumptions of part (i), it was shown in the proof of Theorem~\ref{thm:Zamen-int-fact}(ii) that $(\tilde Y, \tilde \eta)$ is a relatively measure-preserving extension of $(B, \nu)$, and also is a joining of $(Y, \eta)$ and $(B, \nu)$. Since $\nu$ is $\G$-tight by Theorem~\ref{thm:USB->rigid}, the assertion (i) follows from Proposition~\ref{prop:nc-no-Z-int-gen}.\\
Now, assume (ii). Then $(\tilde Y, \tilde \eta)$ is a relatively measure-preserving extension of $(B, \nu)$, indeed the integration of the $Z$-component with respect to $m$ is the canonical conditional expectation $L^\infty(B \times Z, \nu \times m)\to L^\infty(B, \nu)$, which is obviously $\G$-equivariant. Also, $(\tilde Y, \tilde \eta)$ is clearly a joining of $(Y, \eta)$ and $(B, \nu)$. Thus, assertion (ii) also follows from Proposition~\ref{prop:nc-no-Z-int-gen}. 
\end{proof}
\end{cor}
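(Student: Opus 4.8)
The plan is, in both cases, to verify the hypotheses of Proposition~\ref{prop:nc-no-Z-int-gen} and then invoke it verbatim. The compact $\G$-space carrying a $\G$-tight measure will in each case be a uniquely stationary \emph{compact} model $X$ of the Furstenberg--Poisson boundary $(B,\nu)$: such a model exists by hypothesis, and we may take it to be minimal (if the given model is not minimal, restrict to the support of its unique stationary measure, which is again a uniquely stationary compact model of the same $\mu$-boundary). Theorem~\ref{thm:USB->rigid} then tells us that $\nu$ is $\G$-tight. So the task in each case reduces to realizing $(\tilde Y,\tilde\eta)$ both as a relatively measure-preserving extension of $(X,\nu)$ and as a joining of $(Y,\eta)$ with $(X,\nu)$; once that is done, Proposition~\ref{prop:nc-no-Z-int-gen} delivers the conclusion.

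For part (i), I would recycle the argument already run in the proof of Theorem~\ref{thm:Zamen-int-fact}(ii). Since $(\tilde Y,\tilde\eta)$ is Zimmer amenable it is in particular weakly Zimmer amenable, so there is a $\G$-equivariant ucp map $C(B)\to L^\infty(\tilde Y,\tilde\eta)$; by Lemma~\ref{lem:quasi-factor} this corresponds to a measurable $\G$-equivariant map $\tilde Y\to\pr(B)$, and since $(B,\nu)$ is uniquely stationary, \cite[Corollary 2.10(a)]{Margulis-book-91} forces its values to be Dirac masses, so it descends to an honest $\G$-factor map $\rho\colon(\tilde Y,\tilde\eta)\to(B,\nu)$. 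Next I would establish, exactly as in that proof, the identity $\phi'=\bnd\circ\rho$, where $\phi'$ is the relatively measure-preserving factor map onto the $\mu$-boundary of the standard cover and $\bnd$ is the canonical boundary factor map; since a relatively measure-preserving factor map between $\mu$-boundaries is an isomorphism, this identifies the standard cover's $\mu$-boundary with $(B,\nu)$ and shows $\rho$ is relatively measure-preserving. Thus $(\tilde Y,\tilde\eta)$ is a relatively measure-preserving extension of the minimal uniquely stationary model $X$ and — being the standard cover of $(Y,\eta)$ — a joining of $(Y,\eta)$ and $(B,\nu)$, which is precisely what Proposition~\ref{prop:nc-no-Z-int-gen} needs.

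For part (ii) the verification is direct. Integrating out the $Z$-coordinate against $m$ gives a faithful conditional expectation $L^\infty(B\times Z,\nu\times m)\to L^\infty(B,\nu)$, and it is $\G$-equivariant: using invariance of $m$ under the pmp action $\G\act(Z,m)$ one changes variables in the $Z$-integral to move an element $g\in\G$ across it, so the expectation intertwines the two diagonal $\G$-actions. Hence $(B\times Z,\nu\times m)$ is a relatively measure-preserving extension of $(B,\nu)$, hence of its minimal uniquely stationary compact model $X$ (under the identification $L^\infty(X,\nu)=L^\infty(B,\nu)$). Moreover $L^\infty(B\times Z,\nu\times m)=L^\infty(B,\nu)\,\overline{\otimes}\,L^\infty(Z,m)$ is generated as a von Neumann algebra by $L^\infty(B,\nu)$ (whose weak* dense subalgebra $C(B)$ is the tight piece coming from $X$) together with $L^\infty(Z,m)$, the algebra of the complementary pmp $\G$-factor, so $(B\times Z,\nu\times m)$ is a joining of $(Y,\eta)$ with $(B,\nu)$. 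Proposition~\ref{prop:nc-no-Z-int-gen} applies again.

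The real obstacle is confined to part (i): one must make sure that the boundary over which $(\tilde Y,\tilde\eta)$ is relatively measure-preserving can be taken to be a \emph{compact uniquely stationary} model, rather than some abstract $\mu$-boundary. This is exactly where the uniquely-stationary hypothesis is used — it is what collapses the $\pr(B)$-valued map produced from Zimmer amenability to a genuine factor map, and it is what (via the identity $\phi'=\bnd\circ\rho$ together with boundary rigidity) pins the standard cover's $\mu$-boundary down to $(B,\nu)$ itself. Once that identification is in place, part (i) is a citation of Proposition~\ref{prop:nc-no-Z-int-gen}; part (ii) needs only the routine equivariance check for the conditional expectation recorded above.
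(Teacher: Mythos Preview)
Your proposal is correct and follows essentially the same approach as the paper: in both cases you verify the hypotheses of Proposition~\ref{prop:nc-no-Z-int-gen} by showing $(\tilde Y,\tilde\eta)$ is a relatively measure-preserving extension of $(B,\nu)$ and a joining of $(Y,\eta)$ with $(B,\nu)$, with part (i) citing the argument already carried out in Theorem~\ref{thm:Zamen-int-fact}(ii) and part (ii) being the direct check via the $m$-integration conditional expectation. Your additional care in passing to a minimal model before invoking Theorem~\ref{thm:USB->rigid} is a point the paper leaves implicit.
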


The above statements, can be considered as `minimal ambient injectivity' results, and therefore, since the commutant of an injective von Neumann algebra is also injective (regardless of the representation), by taking commutants in the above inclusions, we obtain (new) examples of maximal injective von Neumann subalgebras. 

In particular, in the product space case as in part (ii) of Corollary~\ref{cor:no-inj-int-two-case}, we get the following.

Given a non-singular action $\G\curvearrowright (X, \nu)$, we have an action $\G\curvearrowright \cB(L^2(X, \nu))$ by inner automorphisms associated with the corresponding Koopman representation.

\begin{cor}\label{cor:max-inj-vn}
Let $\G$ be a discrete group and $\mu\in\pr(G)$ a generating measure such that the Furstenberg-Poisson Boundary $(B, \nu)$ of $(\G, \mu)$ has a uniquely stationary compact model. Let $\G\curvearrowright (Z, m)$ be a measure-preserving action. 
Then the von Neumann algebra $\G\ltimes L^\infty(B \times Z , \nu \times m)$ is maximal injective in $\G\ltimes \big(\cB(L^2(B, \nu))\overline{\otimes}L^\infty(Z, m)\big)$.
\begin{proof}
Assume that $N$ is an injective von Neumann algebra, and that we have $\G\ltimes L^\infty(B \times Z , \nu \times m) \subseteq N \subseteq \G\ltimes \left(\cB(L^2(B, \nu))\overline{\otimes}L^\infty(Z, m)\right)$. Taking commutants in $B\big(\ell^2(\G)\otimes L^2(B \times Z , \nu \times m)\big)$, we get (see e.g. \cite[Proposition V.7.14]{Tak1}) the inclusion of von Neumann algebras 
\[
\G\ltimes L^\infty(Z , m)  \subseteq N' \subseteq \G\ltimes L^\infty(B \times Z , \nu \times m) ,
\]
and $N'$ is injective. Thus, $N' = \G\ltimes L^\infty(B \times Z , \nu\times m)$ by Proposition~\ref{prop:nc-no-Z-int-gen}, and hence $N = \G\ltimes L^\infty(B \times Z , \nu\times m)$.
\end{proof}
\end{cor}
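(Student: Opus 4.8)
The plan is to deduce the maximality assertion from the `minimal ambient injectivity' statement of Proposition~\ref{prop:nc-no-Z-int-gen} by passing to commutants. First I would fix the standard spatial representation of both crossed products on $\cH:=\ell^2(\G)\otimes L^2(B\times Z,\nu\times m)$, so that
\[
\G\ltimes L^\infty(B\times Z,\nu\times m)\ \subseteq\ \G\ltimes\big(\cB(L^2(B,\nu))\overline{\otimes}L^\infty(Z,m)\big)\ \subseteq\ \cB(\cH)
\]
becomes a concrete chain of von Neumann algebras. (The bottom algebra is itself injective: the $\G$-action on $B\times Z$ is amenable, being a product of the amenable boundary action $\G\act(B,\nu)$ with an arbitrary action, so by Zimmer's characterization its crossed product is injective; thus the genuine content is the maximality.) Let $N$ be an injective von Neumann algebra with $\G\ltimes L^\infty(B\times Z,\nu\times m)\subseteq N\subseteq\G\ltimes\big(\cB(L^2(B,\nu))\overline{\otimes}L^\infty(Z,m)\big)$; the goal is to force $N$ to coincide with the bottom algebra.

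The first step is to pass to commutants inside $\cB(\cH)$. Inside $\cB(L^2(B\times Z,\nu\times m))=\cB(L^2(B,\nu))\overline{\otimes}\cB(L^2(Z,m))$, the tensor-product commutation theorem gives $\big(\cB(L^2(B,\nu))\overline{\otimes}L^\infty(Z,m)\big)'=\bC 1\,\overline{\otimes}\,L^\infty(Z,m)\cong L^\infty(Z,m)$ (using $\cB(L^2(B,\nu))'=\bC 1$ and maximal abelianness of $L^\infty(Z,m)$), while $L^\infty(B\times Z,\nu\times m)$ is maximal abelian there, hence equals its own commutant. Feeding this into the commutation theorem for discrete-group von Neumann crossed products (\cite[Proposition V.7.14]{Tak1}), which identifies the commutant of $\G\ltimes M$ in this representation with $\G\ltimes M'$ (up to the canonical spatial identification via the conjugation on $\ell^2(\G)$), and taking commutants of the displayed chain reverses the inclusions:
\[
\G\ltimes L^\infty(Z,m)\ \subseteq\ N'\ \subseteq\ \G\ltimes L^\infty(B\times Z,\nu\times m),
\]
with $N'$ injective, since the commutant of an injective von Neumann algebra is injective, irrespective of the representation.

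The second step is to recognize this sandwich as an instance of Proposition~\ref{prop:nc-no-Z-int-gen}. Take the compact $\G$-space $X=B$: by hypothesis the Furstenberg--Poisson boundary of $(\G,\mu)$ has a uniquely stationary compact model, i.e.\ is a $\mu$-USB, so $\nu$ is a $\G$-tight measure on $X$ by Theorem~\ref{thm:USB->rigid}. Put $(\tilde Y,\tilde\eta)=(B\times Z,\nu\times m)$ and $(Y,\eta)=(Z,m)$: the coordinate projection $B\times Z\to Z$ is a $\G$-factor map; $L^\infty(B\times Z,\nu\times m)$ is generated by $L^\infty(B,\nu)$ and $L^\infty(Z,m)$, so $(\tilde Y,\tilde\eta)$ is a joining of $(Y,\eta)$ and $(X,\nu)$; and integrating out the $Z$-coordinate is a $\G$-equivariant conditional expectation $L^\infty(B\times Z,\nu\times m)\to L^\infty(B,\nu)$, precisely because $\G\act(Z,m)$ is measure-preserving, so $(\tilde Y,\tilde\eta)$ is a relatively measure-preserving extension of $(X,\nu)$. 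Hence Proposition~\ref{prop:nc-no-Z-int-gen} applies to the sandwich for $N'$ and forces $N'$ to be all of $\G\ltimes L^\infty(B\times Z,\nu\times m)$, equivalently $N'$ equals the commutant of the bottom algebra of the original chain; taking commutants a last time (now just the double-commutant theorem for that von Neumann algebra) gives $N=N''=\G\ltimes L^\infty(B\times Z,\nu\times m)$, which is the asserted maximal injectivity.

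The step I expect to be the main obstacle --- or rather the only non-formal one --- is the commutation computation: one must set up a single representation on $\ell^2(\G)\otimes L^2(B\times Z,\nu\times m)$ in which \emph{both} ambient crossed products sit compatibly and confirm that taking commutants carries $\G\ltimes M$ to $\G\ltimes M'$ there, which is exactly where \cite[Proposition V.7.14]{Tak1} enters. Everything else --- the tensor-product commutant identities, stability of injectivity under commutants, and the invocation of Proposition~\ref{prop:nc-no-Z-int-gen} (itself resting on co-tightness via Theorem~\ref{thm:USB->rigid} and Lemma~\ref{lem:faith-cond-exp-->G-rigid}) --- is routine bookkeeping.
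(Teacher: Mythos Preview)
Your proposal is correct and follows essentially the same route as the paper's proof: pass to commutants via \cite[Proposition V.7.14]{Tak1} to obtain the sandwich $\G\ltimes L^\infty(Z,m)\subseteq N'\subseteq \G\ltimes L^\infty(B\times Z,\nu\times m)$ with $N'$ injective, then apply Proposition~\ref{prop:nc-no-Z-int-gen}. The only difference is presentational: you spell out the tensor-commutant computation and the verification of the hypotheses of Proposition~\ref{prop:nc-no-Z-int-gen}, whereas the paper leaves the former to the Takesaki citation and has already recorded the latter as Corollary~\ref{cor:no-inj-int-two-case}(ii).
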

The above corollary can be of course stated in the more general setup of Proposition~\ref{prop:nc-no-Z-int-gen}, but even a special case of the corollary, where $Z$ is the trivial space, yields a stronger and more general version of Suzuki's maximal injectivity result \cite[Corollary 3.8]{Suz18}. Our proof strengthens his result as we do not need splitting property for the crossed products (\cite[Theorem 3.6]{Suz18}), which only holds for extensions of free actions. Furthermore, and more importantly, our proof also avoids the use of Margulis' Factor Theorem, which is a deep result concerning higher rank lattices. In particular, it shows that the source of such rigidity of intermediate von Neumann algebras, is not higher rank, but the much wider phenomenon of tightness. In fact, we obtain a large class of new examples of maximal injective von Neumann algebras. As remarked before, examples of Furstenberg-Poisson Boundary actions with a uniquely stationary compact model, include: the actions of hyperbolic groups on the Gromov boundary, linear groups on flag varieties, mapping class groups on the Thurston boundary, and $\rm{Out}(\bF_n)$ on the boundary of the outer space, all for suitable $\mu$'s.
\begin{remark}
Similar methods as in the above, but applied in the case of tight inclusions as in Theorem~\ref{thm:parsubgrp->rigmsre} for a subgroup $\Lambda\leq \Gamma$ that is, in addition, a maximal amenable subgroup, proves maximal injectivity of $L\Lambda$ in $L\Gamma$.
Such examples include for instance, maximal abelian subgroups of free groups. The result of Boutonnet--Carderi \cite[Theorem~A]{BoutCar15} covers these examples, but our methods offer a new approach which may result in new examples in this setup.
\end{remark}

\end{document}